\documentclass[11pt]{amsart}

\usepackage[T1]{fontenc}
\usepackage[utf8]{inputenc}
\usepackage{lmodern}
\usepackage[a4paper,margin=30mm]{geometry}
\usepackage{amssymb,mathtools,bm}
\usepackage{microtype}
\usepackage{booktabs}
\usepackage{tabularx}
\usepackage{graphicx}
\usepackage{xcolor}
\usepackage{enumitem}
\usepackage{float}
\usepackage[section]{placeins}
\usepackage[numbers,sort&compress]{natbib}
\usepackage[
  hidelinks,
  pdfencoding=auto,
  psdextra,
  pdftitle={Wasserstein Stability and Free Boundaries in Measure-Parameterized Bilevel Obstacle Problems},
  pdfauthor={Kun Huang},
  pdfsubject={Variational inequalities, bilevel optimization, and free-boundary stability under probability-law perturbations},
  pdfkeywords={obstacle problem, variational inequality, bilevel optimization, Wasserstein stability, free boundary, optimal taxation},
  pdflang={en-US},
  pdfdisplaydoctitle=true
]{hyperref}
\usepackage[nameinlink,capitalise,noabbrev]{cleveref}

\numberwithin{equation}{section}

\theoremstyle{plain}
\newtheorem{theorem}{Theorem}[section]
\newtheorem{proposition}[theorem]{Proposition}
\newtheorem{lemma}[theorem]{Lemma}
\newtheorem{corollary}[theorem]{Corollary}
\theoremstyle{definition}
\newtheorem{assumption}[theorem]{Assumption}
\newtheorem{example}[theorem]{Example}

\theoremstyle{remark}
\newtheorem{remark}[theorem]{Remark}

\AddToHook{env/theorem/begin}{\crefalias{section}{theorem}}
\AddToHook{env/proposition/begin}{\crefalias{theorem}{proposition}}
\AddToHook{env/lemma/begin}{\crefalias{theorem}{lemma}}
\AddToHook{env/corollary/begin}{\crefalias{theorem}{corollary}}
\AddToHook{env/assumption/begin}{\crefalias{theorem}{assumption}}
\AddToHook{env/example/begin}{\crefalias{theorem}{example}}
\AddToHook{env/definition/begin}{\crefalias{theorem}{definition}}
\AddToHook{env/remark/begin}{\crefalias{theorem}{remark}}
\crefname{assumption}{Assumption}{Assumptions}
\Crefname{assumption}{Assumption}{Assumptions}
\newcommand{\R}{\mathbb{R}}
\newcommand{\Pp}{\mathcal{P}}
\newcommand{\J}{\mathcal{J}}
\newcommand{\K}{\mathcal{K}}

\newcommand{\dd}{\,\mathrm{d}}
\newcommand{\Wass}{W_1}

\newcommand{\argmin}{\operatorname*{arg\,min}}
\newcommand{\argmax}{\operatorname*{arg\,max}}

\DeclareMathOperator{\Tr}{Tr}

\title[Wasserstein Stability of Bilevel Obstacles]{Wasserstein Stability and Free Boundaries in Measure-Parameterized Bilevel Obstacle Problems}
\author{Kun Huang}
\address{Economics and Management School, Wuhan University, Wuhan, China}
\email{huangkun123huang@163.com}
\urladdr{https://beibeihk.github.io/myblog/}
\date{September 5, 2026}
\keywords{Obstacle problem, variational inequality, bilevel optimization, Wasserstein stability, free boundary, optimal taxation}
\subjclass[2020]{Primary 49J40; Secondary 35J86, 35R35, 49J53, 49K40, 91B64}

\begin{document}

\begin{abstract}
We study obstacle-constrained variational problems whose reduced energies depend on a probability law through a lower-level optimizer.
Uniform strong convexity yields a single-valued Lipschitz follower response, while convexity and a Poincar\'e inequality give a unique upper-level policy.
A type-Lipschitz reduced marginal then implies Lipschitz continuity of the policy map from the 1-Wasserstein metric to the energy space.
In a one-dimensional linear-obstacle subclass, the policy derivative is the positive part of a cumulative forcing.
Single crossing makes the coincidence set an interval.
An algebraic crossing of order \(m\) gives a \(W_1^{1/m}\) modulus for its endpoint; odd-power examples show that this exponent is sharp, while a transversal crossing recovers Lipschitz stability.
For empirical laws on compact subsets of \(\mathbb R^k\), dimension-dependent Wasserstein bounds yield finite-sample rates for policies and thresholds.
At a transversal population root, the empirical threshold is asymptotically linear, with an explicit influence function and central limit theorem.
For higher-dimensional regular patches, a conditional level-set argument gives local Hausdorff stability when a nondegenerate switching function is available.
An explicit quadratic firm response produces a nonlinear corporate-tax schedule with an endogenous zero-tax region and a Wasserstein-stable threshold.
The tax illustration is analytic and uses no empirical calibration.
\end{abstract}

\maketitle

\section{Introduction}
\label{sec:introduction}

Many optimization problems depend on a population only through its probability law.
The law may describe agents, firms, demand scenarios, or uncertain coefficients, while the decision variable is a function constrained by a pointwise inequality.
Two mathematical complications then interact.
First, the population enters after a lower-level optimization has been solved, so the upper objective is a reduced bilevel functional.
Second, an obstacle constraint partitions the domain into coincidence and positivity regions whose interface is endogenous.
Continuity of the optimizer does not by itself control that interface: a small vertical perturbation can move a nearly tangent zero by much more than the size of the perturbation.

This paper studies the complete perturbation chain
\[
\text{population law}
\longmapsto \text{follower response}
\longmapsto \text{obstacle policy}
\longmapsto \text{free boundary}.
\]
The metric on probability laws is the 1-Wasserstein distance.
Uniform strong convexity controls the follower response, and strong monotonicity controls the reduced variational inequality.
In a one-dimensional linear-obstacle subclass, integration of the Euler equation produces a scalar switching function whose nontrivial zero is the free boundary.
This reduction makes it possible to distinguish three levels of information: energy-space stability of the policy, a deterministic modulus for the boundary, and local statistical inference for the boundary point.

Variational inequalities (VIs) and obstacle problems have a classical existence, regularity, and free-boundary theory; foundational references include \citet{LionsStampacchia1967}, \citet{KinderlehrerStampacchia1980}, \citet{Friedman1982}, and \citet{Rodrigues1987}.
The functional-analytic tools used below are standard; useful general references are \citet{EkelandTemam1999} and \citet{Brezis2011}.
The regularity and geometric structure of the classical obstacle free boundary were developed in work of Caffarelli and many successors; see \citet{LewyStampacchia1969,Caffarelli1977,Caffarelli1998} and the monographs \citet{CaffarelliSalsa2005} and \citet{PetrosyanShahgholianUraltseva2012}.
More recently, \citet{FigalliSerra2019} refined the stratification and fine structure of the singular set, while \citet{FigalliRosOtonSerra2020} established generic regularity results.
Those works concern the fine geometry of a fixed obstacle problem; our question is the quantitative dependence of a selected interface on a probability-law parameter.

Sensitivity of variational inequalities has a separate history, including the differentiability analysis of \citet{Mignot1976} and the broader perturbation frameworks of \citet{BonnansShapiro2000}, \citet{RockafellarWets1998}, and \citet{DontchevRockafellar2009}.
Recent work goes substantially beyond classical directional differentiability.
For obstacle-type quasi-variational inequalities, \citet{ChristofWachsmuth2022} proved local Lipschitz stability and Hadamard directional differentiability of extremal solution maps.
For classical obstacle variational inequalities, \citet{ChristofWachsmuth2023,ChristofWachsmuth2025} obtained Newton differentiability in suitable Lebesgue-to-energy settings, while \citet{AlphonseWachsmuth2025} related derivatives of penalized problems to a Bouligand subdifferential.
For free boundaries specifically, \citet{Blank2001} established sharp regularity and stability results under Dini-type assumptions, and \citet{SerfatySerra2018} obtained quantitative motion laws for regular free boundaries under smooth perturbations of the obstacle.
At the level of contact-set measure, \citet{BlankLeCrone2019} proved linear estimates for symmetric differences under perturbations of the forcing and boundary data.
That topology differs from the pointwise displacement of a nondegenerate one-dimensional interface considered here.

The present analysis does not seek a new generalized derivative of the entire obstacle solution operator or a finer classification of singular free-boundary points.
Instead, it tracks a metric perturbation of a probability law through a reduced bilevel operator and then identifies the extra crossing condition needed to control the active-set boundary.
An algebraic crossing of order \(m\) yields a \(W_1^{1/m}\) boundary modulus.
The transversal case \(m=1\) gives Lipschitz motion and an influence-function expansion for empirical boundaries; a uniform odd-power family proves sharpness of the exponent when the crossing is degenerate.

The probability metric is natural because expectations of Lipschitz functions are stable in Wasserstein distance; see \citet{Villani2003,Villani2009,AmbrosioGigliSavare2008,Santambrogio2015}.
Measure-dependent stochastic programs are commonly analyzed through perturbations and sample-average approximations \citep{ShapiroDentchevaRuszczynski2014}.
Sharp empirical-Wasserstein behavior on compact metric spaces is governed by multiscale geometry and Wasserstein dimension \citep{WeedBach2019}; Euclidean moment bounds are developed by \citet{FournierGuillin2015}.
In one dimension, distribution-sensitive behavior can be slower without compactness or additional regularity \citep{BobkovLedoux2019}.
We retain the exact compact-interval bound obtained from the Dvoretzky--Kiefer--Wolfowitz inequality \citep{Massart1990}, and also state ambient-dimension-dependent guarantees on compact subsets of \(\mathbb R^k\).
These generic Wasserstein bounds are complemented by a root-\(N\) central limit theorem when the threshold is determined by a single transversal estimating equation.

Bilevel programming provides the conceptual setting for the lower and upper decisions \citep{Dempe2002,ColsonMarcotteSavard2007,DempeZemkoho2020}.
Modern algorithmic theory treats, among other regimes, smooth nonconvex--strongly-convex problems through implicit or iterative differentiation \citep{JiYangLiang2021} and possibly nonsmooth convex lower-level problems through first-order penalty methods \citep{LuMei2024}.
Our use of lower-level strong convexity is not an algorithmic complexity result.
It supplies a single-valued response whose dependence on the population law can be transferred to an infinite-dimensional obstacle variational inequality.

The application is a stylized nonlinear corporate-tax problem.
The classical optimal-tax literature supplies the foundations for production efficiency, behavioral responses to schedules, and heterogeneous taxpayers \citep{Mirrlees1971,DiamondMirrlees1971,Saez2001,DiamondSaez2011}.
For corporate taxation, heterogeneity, administrative costs, and differential treatment across firm sizes are central modeling considerations; examples include \citet{DevereuxGriffithKlemm2002}, \citet{DaviesEckel2010}, \citet{KeenMintz2004}, \citet{DharmapalaSlemrodWilson2011}, and \citet{BauerDaviesHaufler2014}.
The reduced treatment of multiple administrative and behavioral margins is also consistent with the tax-system perspective of \citet{SlemrodGillitzer2014}.
Our application is neither a general-equilibrium nor an empirical tax model.
It is an analytic verification case in which a firm chooses investment after observing a schedule, while the government trades off a local fiscal benefit, a type-dependent implementation cost, an activity benefit, and a quadratic smoothness cost.
The resulting schedule is zero below an endogenous firm-characteristic threshold and nonlinear above it.

The contributions are as follows.
\begin{enumerate}[label=(\roman*)]
\item We give primitive conditions under which the lower response is measurable, unique, Lipschitz in policy and type, and differentiable in policy.
\item We prove existence, uniqueness, complementarity, and a Wasserstein-Lipschitz estimate for a general convex reduced obstacle problem.
\item In one dimension, we derive an exact cumulative-forcing formula, prove interval structure of the contact set, and obtain \(C^1\) stability of the policy.
\item We prove a \(W_1^{1/m}\) free-boundary modulus under a uniform algebraic crossing of order \(m\).  The transversal theorem is the case \(m=1\), and a cubic family shows that the exponent \(1/3\) is optimal for that family.
\item We transfer compact Euclidean empirical-Wasserstein rates to policies and algebraically degenerate thresholds.  At a transversal root we further derive an asymptotic linear representation, an explicit influence function, and a central limit theorem.
\item For regular higher-dimensional spatial patches, we isolate the conditional level-set argument that turns a stable nondegenerate defining function into local Hausdorff stability.
\item In an explicit corporate-tax model, we verify the reduced assumptions and response properties directly, recover the general influence function in closed form, and show that the Lipschitz exponent for the threshold is attained by point-mass perturbations.
\end{enumerate}

These results form a unified deterministic-modulus, statistical-transfer, and local-limit chain for a measure-parameterized class of bilevel obstacle problems.
The scalar root perturbation, empirical-Wasserstein, and central-limit ingredients are standard in isolation; the contribution is their integration with the primitive-to-reduced variational analysis and the resulting separation between policy and interface stability.

The remainder is organized as follows.
\Cref{sec:setting} introduces the abstract spaces and assumptions.
\Cref{sec:response} treats the lower-level response and reduction, and \cref{sec:variational} proves the upper-level results.
\Cref{sec:one-dimensional,sec:fb-stability} develop the one-dimensional free-boundary theory and its deterministic moduli.
\Cref{sec:empirical} studies empirical laws and local threshold inference, while \cref{sec:higher-dimensional} gives the conditional regular-patch extension.
\Cref{sec:tax} verifies the corporate-tax application.
An analytic illustration in \cref{sec:numerical} concludes the application, followed by discussion, conclusions, and technical appendices.

\section{Setting and assumptions}
\label{sec:setting}

\subsection{Probability laws and the policy space}

Let \((\Theta,d_\Theta)\) be a Polish metric space and let \(\Pp_1(\Theta)\) denote the probability measures with finite first moment.
For \(\mu,\nu\in\Pp_1(\Theta)\),
\[
\Wass(\mu,\nu)
:=
\inf_{\pi\in\Pi(\mu,\nu)}
\int_{\Theta\times\Theta}d_\Theta(\theta,\eta)\,\pi(\dd\theta,\dd\eta),
\]
where \(\Pi(\mu,\nu)\) is the set of couplings.
Whenever \(h:\Theta\to\R\) is Lipschitz, the coupling definition immediately gives
\begin{equation}
\left|\int_\Theta h\,\dd\mu-\int_\Theta h\,\dd\nu\right|
\leq \operatorname{Lip}(h)\Wass(\mu,\nu).
\label{eq:KR}
\end{equation}
This is the only optimal-transport dual estimate required below.

Let \(\Omega\subset\R^d\) be bounded and Lipschitz.
Fix a relatively open boundary portion \(\Gamma_D\subset\partial\Omega\) of positive surface measure and define
\[
V:=\{v\in H^1(\Omega):\Tr v=0\text{ on }\Gamma_D\},
\qquad
\K:=\{v\in V:v\geq0\text{ a.e. in }\Omega\}.
\]
There is a constant \(C_P>0\) such that
\begin{equation}
\|v\|_{L^2(\Omega)}\leq C_P\|\nabla v\|_{L^2(\Omega)}
\quad\text{for every }v\in V.
\label{eq:poincare}
\end{equation}
We use \(\|v\|_V:=\|\nabla v\|_{L^2}\), an equivalent Hilbert norm on \(V\).
The cone \(\K\) is closed and convex.

\subsection{The bilevel model}

For \(x\in\Omega\), type \(\theta\in\Theta\), and scalar policy \(z\in\R\), a follower chooses \(y\in\R^m\) by solving
\begin{equation}
R(x,\theta,z)
:=\argmin_{y\in\R^m}\Phi(x,\theta,z,y).
\label{eq:response-definition}
\end{equation}
The upper-level primitive integrand is \(L(x,\theta,z,y)\).
After substituting the follower response, define
\begin{equation}
f(x,\theta,z):=L(x,\theta,z,R(x,\theta,z)),
\qquad
F_\mu(x,z):=\int_\Theta f(x,\theta,z)\,\mu(\dd\theta).
\label{eq:reduced-integrand}
\end{equation}
The policy problem is
\begin{equation}
(P_\mu)
\qquad
\min_{u\in\K}
\J_\mu(u),
\qquad
\J_\mu(u)
:=
\frac{\gamma}{2}\int_\Omega|\nabla u|^2\dd x
+\int_\Omega F_\mu(x,u(x))\dd x,
\label{eq:upper-problem}
\end{equation}
where \(\gamma>0\).

We separate primitive assumptions, which make \eqref{eq:response-definition} meaningful, from reduced assumptions, which drive the upper-level analysis.

\begin{assumption}[Uniformly strongly convex response]
\label{ass:response}
For almost every \(x\), the map \((\theta,z,y)\mapsto\Phi(x,\theta,z,y)\) is continuous, while \(x\mapsto\Phi(x,\theta,z,y)\) is measurable.
The function \(y\mapsto\Phi(x,\theta,z,y)\) is continuously differentiable and there is \(m_\Phi>0\) such that
\begin{equation}
\left\langle
\nabla_y\Phi(x,\theta,z,y_1)-\nabla_y\Phi(x,\theta,z,y_2),
y_1-y_2
\right\rangle
\geq m_\Phi|y_1-y_2|^2.
\label{eq:strong-convexity-response}
\end{equation}
There are constants \(L_z,L_\theta\geq0\) such that, for all admissible arguments,
\begin{align}
|\nabla_y\Phi(x,\theta,z_1,y)-\nabla_y\Phi(x,\theta,z_2,y)|
&\leq L_z|z_1-z_2|,
\label{eq:grad-lip-z}\\
|\nabla_y\Phi(x,\theta,z,y)-\nabla_y\Phi(x,\eta,z,y)|
&\leq L_\theta d_\Theta(\theta,\eta).
\label{eq:grad-lip-theta}
\end{align}
\end{assumption}

\begin{assumption}[Reduced convex integrand]
\label{ass:reduced}
For each \(\mu\) in a fixed class \(\mathfrak M\subset\Pp_1(\Theta)\), \(F_\mu\) is a Carath\'eodory integrand and is continuously differentiable in \(z\).
Write
\[
G_\mu(x,z):=\partial_zF_\mu(x,z).
\]
There exist \(a_0\in L^2(\Omega)\), \(a_1\geq0\), \(b_0\in L^1(\Omega)\), and \(\ell\in L^2(\Omega)\), independent of \(\mu\in\mathfrak M\), such that
\begin{align}
|F_\mu(x,0)|&\leq b_0(x),
\label{eq:F-zero}\\
|G_\mu(x,z)|&\leq a_0(x)+a_1|z|,
\label{eq:G-growth}\\
(G_\mu(x,z_1)-G_\mu(x,z_2))(z_1-z_2)&\geq0,
\label{eq:G-monotone}\\
|G_\mu(x,z)-G_\nu(x,z)|&\leq \ell(x)\Wass(\mu,\nu)
\label{eq:G-measure}
\end{align}
for almost every \(x\), all \(z,z_1,z_2\in\R\), and all \(\mu,\nu\in\mathfrak M\).
Moreover, there is \(b\in L^1(\Omega)\), independent of \(\mu\), such that
\begin{equation}
F_\mu(x,z)\geq-b(x)-c_0|z|
\label{eq:F-lower}
\end{equation}
for a constant \(c_0\geq0\).
\end{assumption}

Monotonicity in \eqref{eq:G-monotone} is equivalent to convexity of \(z\mapsto F_\mu(x,z)\).
No positive zero-order curvature is required: the gradient term and the Dirichlet trace already make \(\J_\mu\) strictly convex.
The uniform measure estimate \eqref{eq:G-measure} will follow from primitive type-Lipschitz estimates in \cref{sec:response}.

\subsection{Baseline and stronger assumptions}

\Cref{ass:response,ass:reduced} are the baseline hypotheses for the bilevel reduction, well-posedness, and energy-space stability.
The one-dimensional free-boundary theorem later adds continuity and monotonicity of a linear forcing.
An algebraic crossing supplies a Hölder modulus for the threshold, and transversality specializes it to a Lipschitz modulus and permits local statistical inference.
The higher-dimensional result assumes, rather than derives, a regular switching representation on a compact patch.
This hierarchy is intentional: solution stability is a monotonicity property, whereas free-boundary stability is a geometric property and requires more information.

\begin{remark}[Other Wasserstein orders]
If \(p\geq1\) and \(\mu,\nu\in\Pp_p(\Theta)\), then \(W_1(\mu,\nu)\leq W_p(\mu,\nu)\) by Jensen's inequality applied to every coupling.
Every bound below therefore remains valid with \(W_p\) on the right-hand side.
We work with \(W_1\) because Lipschitz type dependence gives its constants directly and because the one-dimensional empirical identity is simplest at order one.
\end{remark}

\subsection{Guide to the main results}

\Cref{tab:dependency} records the hierarchy of assumptions and conclusions used in the remainder of the paper.

\begin{table}[H]
\centering
\caption{Assumption and conclusion map.}
\label{tab:dependency}
\begin{tabularx}{\textwidth}{>{\raggedright\arraybackslash}p{0.27\textwidth}>{\raggedright\arraybackslash}p{0.30\textwidth}>{\raggedright\arraybackslash}X}
\toprule
Result & Main assumptions & Conclusion \\
\midrule
\Cref{thm:response} & uniform strong convexity and parameter-Lipschitz gradient & unique measurable Lipschitz response \\
\Cref{thm:wellposed} & reduced convexity, growth, Poincar\'e inequality & unique policy and variational inequality \\
\Cref{thm:H1-stability} & reduced marginal type-law estimate & \(H^1\) Wasserstein stability \\
\Cref{thm:exact-one-dimensional} & continuous single-crossing forcing and negative total & interval contact set and exact formula \\
\Cref{thm:algebraic-threshold-stability} & uniform algebraic crossing of order \(m_{\mathrm c}\) & \(W_1^{1/m_{\mathrm c}}\) free-boundary modulus \\
\Cref{thm:threshold-stability} & uniform switching transversality & Lipschitz free-boundary point \\
\Cref{thm:empirical-rate} & compact one-dimensional type space & expectation and high-probability rates \\
\Cref{thm:empirical-W1-dimension} & compact type space in \(\mathbb R^k\) & dimension-dependent policy and threshold rates \\
\Cref{thm:threshold-influence} & type-integral forcing and transversal population root & influence function and threshold CLT \\
\Cref{thm:higher-dimensional} & stable nondegenerate defining function & local Hausdorff stability \\
\Cref{thm:tax-schedule} & quadratic firm response and interior parameter class & explicit nonlinear tax schedule \\
\bottomrule
\end{tabularx}
\end{table}

\section{The lower-level response and the reduced integrand}
\label{sec:response}

We first justify that the response in \eqref{eq:response-definition} is a genuine function rather than a set-valued selection.

\begin{theorem}[Well-posed and stable follower response]
\label{thm:response}
Under \cref{ass:response}, for almost every \(x\) and every \((\theta,z)\), problem \eqref{eq:response-definition} has a unique solution.
The map \((x,\theta,z)\mapsto R(x,\theta,z)\) is jointly measurable after modification on an \(x\)-null set, and
\begin{align}
|R(x,\theta,z_1)-R(x,\theta,z_2)|
&\leq \frac{L_z}{m_\Phi}|z_1-z_2|,
\label{eq:R-lip-z}\\
|R(x,\theta,z)-R(x,\eta,z)|
&\leq \frac{L_\theta}{m_\Phi}d_\Theta(\theta,\eta).
\label{eq:R-lip-theta}
\end{align}
If, in addition, \(\Phi\) is twice continuously differentiable in \((z,y)\), then \(R\) is continuously differentiable in \(z\) and
\begin{equation}
D_zR(x,\theta,z)
=-
\bigl[D^2_{yy}\Phi(x,\theta,z,R(x,\theta,z))\bigr]^{-1}
D^2_{yz}\Phi(x,\theta,z,R(x,\theta,z)).
\label{eq:R-derivative}
\end{equation}
\end{theorem}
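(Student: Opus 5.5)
We will treat the follower problem for fixed \((x,\theta,z)\), with \(x\) outside the exceptional null set of \cref{ass:response}, as a strongly monotone equation \(\nabla_y\Phi(x,\theta,z,y)=0\).

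\emph{Existence and uniqueness.} Write \(g(y):=\Phi(x,\theta,z,y)\). Integrating \eqref{eq:strong-convexity-response} along the segment from \(0\) to \(y\) gives the quadratic lower bound
\[
g(y)\geq g(0)+\langle\nabla g(0),y\rangle+\tfrac{m_\Phi}{2}|y|^2 .
\]
Hence \(g\) is coercive and continuous, and it attains a minimum. By the same inequality \(g\) is strictly convex, so the minimizer is unique. Since \(g\) is \(C^1\) and convex, the minimizer is characterized by \(\nabla g(R)=0\).

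\emph{Lipschitz bounds.} Let \(y_i=R(x,\theta,z_i)\) for \(i=1,2\). Insert both points into \eqref{eq:strong-convexity-response} at the fixed parameter \(z_1\). Using \(\nabla_y\Phi(\cdot,z_1,y_1)=0=\nabla_y\Phi(\cdot,z_2,y_2)\), we get
\[
m_\Phi|y_1-y_2|^2\leq\langle\nabla_y\Phi(z_1,y_1)-\nabla_y\Phi(z_1,y_2),y_1-y_2\rangle
=\langle\nabla_y\Phi(z_2,y_2)-\nabla_y\Phi(z_1,y_2),y_1-y_2\rangle .
\]
By Cauchy--Schwarz and \eqref{eq:grad-lip-z}, the right-hand side is at most \(L_z|z_1-z_2|\,|y_1-y_2|\), which yields \eqref{eq:R-lip-z}. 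The same computation in \(\theta\), using \eqref{eq:grad-lip-theta} in place of \eqref{eq:grad-lip-z}, gives \eqref{eq:R-lip-theta}.

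\emph{Measurability.} This is the step I expect to need the most care. For fixed good \(x\), the map \((\theta,z)\mapsto R\) is continuous by the Lipschitz bounds. So it suffices to show that \(x\mapsto R(x,\theta,z)\) is measurable for fixed \((\theta,z)\), and then to invoke the standard fact that Carathéodory maps (measurable in \(x\), continuous in \((\theta,z)\), with \(\Theta\) Polish hence separable) are jointly measurable.

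For the measurability in \(x\), the plan is as follows.
\begin{enumerate}[label=(\alph*)]
\item \(\Phi(\cdot,\theta,z,\cdot)\) is Carathéodory in \((x,y)\).
\item For each \(M\), the function \(x\mapsto\inf_{y\in\mathbb Q^m,\,|y-a|<r}\Phi\) is measurable, since the infimum is over a countable set and \(\Phi\) is continuous in \(y\).
\item The minimizer satisfies \(|R-a|<r\) iff the infimum over this ball is strictly smaller than the infimum over the complement. This uses uniqueness together with coercivity, and the coercivity estimate should be made locally uniform via the quadratic lower bound.
\end{enumerate}
Together these show that the preimages of rational balls are measurable. If this becomes delicate, I would fall back on the measurable selection theorem for normal integrands (Rockafellar--Wets, Thm.~14.37), since \(\Phi\) is a Carathéodory, hence normal, integrand and its argmin is single-valued. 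Finally, redefine \(R:=0\) on the exceptional \(x\)-null set.

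\emph{Differentiability.} Assume \(\Phi\) is \(C^2\) in \((z,y)\). Strong convexity \eqref{eq:strong-convexity-response} gives \(D^2_{yy}\Phi\succeq m_\Phi I\), so this matrix is invertible. Applying the implicit function theorem to \(H(z,y):=\nabla_y\Phi(x,\theta,z,y)=0\) shows that, near each \(z\), the equation has a \(C^1\) solution branch. By uniqueness of the minimizer this branch coincides with \(R\). Differentiating \(H(z,R(z))=0\) then gives \eqref{eq:R-derivative}, and this derivative is continuous in \(z\) because \(R\) is continuous and the second derivatives of \(\Phi\) are continuous.
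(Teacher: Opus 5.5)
Your proposal is correct and follows the paper's proof essentially step for step: the quadratic lower bound from integrating strong monotonicity, the same cross-testing computation at the fixed parameter \(z_1\) (resp.\ \(\theta\)) for the two Lipschitz bounds, measurability via countable rational infima plus uniqueness combined with continuity in \((\theta,z)\), and the implicit function theorem with \(D^2_{yy}\Phi\succeq m_\Phi I\) for the derivative formula. Your measurability argument is more explicit than the paper's one-line sketch, and the locally uniform coercivity you mention is not needed, because the comparison of the infimum over the ball with the infimum over its complement is made separately for each fixed \(x\).
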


\begin{proof}
Fix \((x,\theta,z)\) outside the exceptional set in \(x\).
Strong monotonicity of the gradient implies strong convexity of \(y\mapsto\Phi(x,\theta,z,y)\).
Indeed, integration of the gradient along the segment from \(y_1\) to \(y_2\) gives
\[
\Phi(y_2)\geq
\Phi(y_1)+\langle\nabla_y\Phi(y_1),y_2-y_1\rangle
+\frac{m_\Phi}{2}|y_2-y_1|^2.
\]
Taking \(y_1=0\) shows coercivity in \(y_2\), hence existence of a minimizer in finite dimension.
Strict convexity makes it unique, and it is characterized by
\begin{equation}
\nabla_y\Phi(x,\theta,z,R(x,\theta,z))=0.
\label{eq:R-first-order}
\end{equation}

Let \(R_i=R(x,\theta,z_i)\).
By \eqref{eq:strong-convexity-response}, \eqref{eq:R-first-order}, and \eqref{eq:grad-lip-z},
\begin{align*}
m_\Phi|R_1-R_2|^2
&\leq
\langle\nabla_y\Phi(x,\theta,z_1,R_1)-\nabla_y\Phi(x,\theta,z_1,R_2),R_1-R_2\rangle\\
&=
\langle\nabla_y\Phi(x,\theta,z_2,R_2)-\nabla_y\Phi(x,\theta,z_1,R_2),R_1-R_2\rangle\\
&\leq L_z|z_1-z_2||R_1-R_2|.
\end{align*}
Cancellation proves \eqref{eq:R-lip-z}; the case \(R_1=R_2\) is immediate.
Replacing \eqref{eq:grad-lip-z} by \eqref{eq:grad-lip-theta} proves \eqref{eq:R-lip-theta}.

Continuity of the unique zero of the strongly monotone gradient follows directly from the two estimates and the assumed continuity of the primitive data.
For the \(x\)-variable, the Carath\'eodory property makes the graph of the unique minimizer measurable; equivalently, one may approximate the infimum over a countable dense subset of \(\R^m\) and use uniqueness.
Thus \(R\) is jointly measurable.
Finally, \eqref{eq:strong-convexity-response} implies that \(D^2_{yy}\Phi\) is invertible with inverse norm at most \(m_\Phi^{-1}\).
The implicit-function theorem applied to \eqref{eq:R-first-order} gives \eqref{eq:R-derivative}.
\end{proof}

The next statement supplies a direct route from primitive derivatives to the reduced measure estimate.

\begin{proposition}[A primitive route to the derivative measure estimate]
\label{prop:primitive-to-reduced}
Suppose the differentiable part of \cref{thm:response} holds, and suppose that \(L\) is continuously differentiable in \((z,y)\).
Define
\begin{equation}
g(x,\theta,z)
:=
\partial_zL(x,\theta,z,R(x,\theta,z))
+D_yL(x,\theta,z,R(x,\theta,z))D_zR(x,\theta,z).
\label{eq:primitive-g}
\end{equation}
For every \(\mu\in\mathfrak M\) and \(M>0\), assume that \(f(\cdot,\cdot,0)\) is integrable with respect to \(\dd x\otimes\mu\) and that there is an
\(e_{\mu,M}\in L^1(\Omega\times\Theta,\dd x\otimes\mu)\) such that
\begin{equation}
|g(x,\theta,z)|\leq e_{\mu,M}(x,\theta)
\quad\text{for almost every }(x,\theta)\text{ and every }|z|\leq M.
\label{eq:g-envelope}
\end{equation}
Assume also that \(z\mapsto f(x,\theta,z)\) is convex and
\begin{equation}
|g(x,\theta,z)-g(x,\eta,z)|
\leq \ell(x)d_\Theta(\theta,\eta)
\label{eq:g-type-lip}
\end{equation}
with \(\ell\in L^2(\Omega)\), uniformly in \(z\).
Then
\[
G_\mu(x,z)=\int_\Theta g(x,\theta,z)\,\mu(\dd\theta)
\]
is monotone in \(z\) and satisfies \eqref{eq:G-measure}.
\end{proposition}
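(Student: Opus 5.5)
The plan has three steps: identify \(g\) as the \(z\)-derivative of \(f\), differentiate \(F_\mu\) under the integral, and then read off monotonicity and the measure estimate.

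\textbf{Step 1: \(g=\partial_z f\).} By the differentiable part of \cref{thm:response}, \(z\mapsto R(x,\theta,z)\) is \(C^1\) with derivative \eqref{eq:R-derivative}. Since \(L\) is \(C^1\) in \((z,y)\), the chain rule applied to \(f(x,\theta,z)=L(x,\theta,z,R(x,\theta,z))\) gives \(\partial_z f(x,\theta,z)=g(x,\theta,z)\) pointwise, with \(g\) as in \eqref{eq:primitive-g}. Moreover, \(g\) is continuous in \(z\). Joint measurability of \(g\) in \((x,\theta)\) follows from measurability of \(R\) (\cref{thm:response}) composed with the Carath\'eodory data.

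\textbf{Step 2: differentiation under the integral.} Fix \(M>0\) and \(|z|\leq M\). Integrate the identity from Step 1 from \(0\) to \(z\):
\[
f(x,\theta,z)=f(x,\theta,0)+\int_0^z g(x,\theta,s)\dd s .
\]
By \eqref{eq:g-envelope}, \(|f(x,\theta,z)|\leq |f(x,\theta,0)|+M e_{\mu,M}(x,\theta)\). This bound is integrable with respect to \(\dd x\otimes\mu\), so Fubini gives \(F_\mu(x,z)\) finite for a.e.\ \(x\). Fubini also allows exchanging the \(\theta\)-integral and the \(s\)-integral:
\[
F_\mu(x,z)=F_\mu(x,0)+\int_0^z\int_\Theta g(x,\theta,s)\,\mu(\dd\theta)\dd s .
\]
For a.e.\ \(x\), the function \(\theta\mapsto e_{\mu,M}(x,\theta)\) is \(\mu\)-integrable. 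Dominated convergence together with continuity of \(g\) in \(s\) then shows that the inner integral is continuous in \(s\) on \([-M,M]\). The fundamental theorem of calculus therefore yields \(\partial_zF_\mu(x,z)=\int_\Theta g(x,\theta,z)\,\mu(\dd\theta)\), and this expression is continuous in \(z\). Letting \(M\) range over the integers gives the identity for all \(z\), off a countable union of null sets.

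The main obstacle is the null-set bookkeeping in this step. The exceptional set of \(x\) depends on \(M\), which is why \(M\) is restricted to a countable family.

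\textbf{Step 3: monotonicity and the measure estimate.} Convexity of \(z\mapsto f(x,\theta,z)\) means \(g\) is nondecreasing in \(z\). Integrating against \(\mu\) preserves this, so \(G_\mu\) is nondecreasing, which is \eqref{eq:G-monotone}. For \eqref{eq:G-measure}, fix \(x\) and \(z\) and set \(h(\theta):=g(x,\theta,z)\). By \eqref{eq:g-type-lip}, \(\operatorname{Lip}(h)\leq\ell(x)\). Since \(h\) is \(\mu\)- and \(\nu\)-integrable by Step 2, the Kantorovich–Rubinstein bound \eqref{eq:KR} gives
\[
|G_\mu(x,z)-G_\nu(x,z)|\leq \ell(x)\Wass(\mu,\nu).
\]
The constant \(\ell\) is independent of \(z\), \(\mu\), and \(\nu\), as required.
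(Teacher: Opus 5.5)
Your proposal is correct and follows essentially the same route as the paper: the chain rule together with the envelope \eqref{eq:g-envelope} and dominated convergence identifies \(G_\mu\) with \(\int_\Theta g\,\dd\mu\), monotonicity follows from convexity of \(f(x,\theta,\cdot)\), and \eqref{eq:G-measure} follows from the coupling estimate (the paper writes out the coupling argument directly instead of citing \eqref{eq:KR}, which is the same argument). Your Step 2 spells out the Fubini, fundamental-theorem-of-calculus, and null-set bookkeeping that the paper compresses into a single sentence.
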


\begin{proof}
The chain rule, \eqref{eq:g-envelope}, and dominated convergence give the displayed identity for \(G_\mu\), first on bounded \(z\)-intervals and hence for every \(z\in\R\).
Convexity of each \(f(x,\theta,\cdot)\) is preserved by integration, so its derivative \(G_\mu(x,\cdot)\) is monotone.
For any coupling \(\pi\in\Pi(\mu,\nu)\), \eqref{eq:g-type-lip} yields
\begin{align*}
|G_\mu(x,z)-G_\nu(x,z)|
&=
\left|
\int_{\Theta\times\Theta}
[g(x,\theta,z)-g(x,\eta,z)]\,\pi(\dd\theta,\dd\eta)
\right|\\
&\leq
\ell(x)
\int_{\Theta\times\Theta}d_\Theta(\theta,\eta)\,\pi(\dd\theta,\dd\eta).
\end{align*}
Taking the infimum over couplings proves \eqref{eq:G-measure}.
\end{proof}

\begin{remark}[Why the two levels are separated]
The response problem may be nonlinear even when the reduced upper integrand is simple.
Conversely, none of the arguments in \cref{sec:variational} needs to carry the variables \(y\) and \(\Phi\) once the properties of \(F_\mu\) have been verified.
This modularity is useful in applications: the lower model can change without altering the obstacle analysis.
\end{remark}

\section{The upper variational problem}
\label{sec:variational}

\subsection{Existence, uniqueness, and the variational inequality}

\begin{theorem}[Well-posedness of the optimal policy]
\label{thm:wellposed}
Under \cref{ass:reduced}, for every \(\mu\in\mathfrak M\) there exists a unique minimizer \(u_\mu\in\K\) of \eqref{eq:upper-problem}.
It is characterized by
\begin{equation}
\int_\Omega
\gamma\nabla u_\mu\cdot\nabla(v-u_\mu)
+G_\mu(x,u_\mu)(v-u_\mu)\dd x
\geq0
\quad\text{for every }v\in\K.
\label{eq:VI}
\end{equation}
\end{theorem}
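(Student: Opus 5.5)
The plan is to apply the direct method of the calculus of variations on the closed convex cone \(\K\subset V\), and then derive \eqref{eq:VI} from Gateaux differentiability of \(\J_\mu\).

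\emph{Step 1: the functional is well defined and finite on \(V\).} From \eqref{eq:F-zero} and \eqref{eq:G-growth}, the mean value theorem in \(z\) gives
\[
|F_\mu(x,z)|\le b_0(x)+a_0(x)|z|+\tfrac{a_1}{2}|z|^2 .
\]
Since \(a_0\in L^2\), \(b_0\in L^1\) and \(u\in L^2\), the map \(u\mapsto\int_\Omega F_\mu(x,u)\dd x\) is finite on \(L^2(\Omega)\). The Carath\'eodory property makes \(x\mapsto F_\mu(x,u(x))\) measurable. This integral term is also continuous on \(L^2\), by the growth bound together with the Vitali or dominated-convergence argument along a.e.\ convergent subsequences. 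It is convex by \eqref{eq:G-monotone}. Consequently it is weakly lower semicontinuous on \(V\). The Dirichlet term is convex and continuous on \(V\), hence also weakly lower semicontinuous.

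\emph{Step 2: coercivity and existence.} Using \eqref{eq:F-lower} and \eqref{eq:poincare},
\[
\J_\mu(u)\ge \tfrac{\gamma}{2}\|u\|_V^2-\|b\|_{L^1}-c_0|\Omega|^{1/2}C_P\|u\|_V ,
\]
which tends to \(+\infty\) as \(\|u\|_V\to\infty\). Moreover \(\J_\mu(0)\le\|b_0\|_{L^1}<\infty\), so the infimum is finite. Take a minimizing sequence in \(\K\). It is bounded in \(V\), so a subsequence converges weakly in \(V\). Since \(\K\) is convex and strongly closed, it is weakly closed, and the limit lies in \(\K\). Weak lower semicontinuity then shows that the limit is a minimizer.

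\emph{Step 3: uniqueness.} The term \(\tfrac{\gamma}{2}\|\nabla u\|^2\) is strictly convex on \(V\) because \(\|\cdot\|_V\) is a norm there, using \eqref{eq:poincare} and the trace condition. Convexity of the \(F_\mu\)-term then makes \(\J_\mu\) strictly convex. Suppose \(u_1\ne u_2\) were two minimizers. Their midpoint lies in \(\K\) and would have strictly smaller energy, which is a contradiction.

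\emph{Step 4: the variational inequality.} For \(v\in\K\) and \(t\in(0,1]\), the point \(u_\mu+t(v-u_\mu)\) lies in \(\K\). The difference quotient of the \(F_\mu\)-term equals \(\int_\Omega G_\mu(x,u_\mu+st(v-u_\mu))(v-u_\mu)\dd x\) for some intermediate \(s=s(x,t)\in[0,1]\). By \eqref{eq:G-growth} it is dominated by
\[
\bigl(a_0+a_1(|u_\mu|+|v|)\bigr)|v-u_\mu|\in L^1 .
\]
Dominated convergence together with the continuity of \(G_\mu\) in \(z\) gives the limit as \(t\downarrow0\). Combining this with the derivative of the Dirichlet term, the inequality \(\J_\mu(u_\mu+t(v-u_\mu))\ge\J_\mu(u_\mu)\) yields \eqref{eq:VI}.

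\emph{Step 5: sufficiency of \eqref{eq:VI}.} Convexity gives \(F_\mu(x,v)\ge F_\mu(x,u)+G_\mu(x,u)(v-u)\) pointwise. Likewise \(\tfrac{\gamma}{2}|\nabla v|^2\ge\tfrac{\gamma}{2}|\nabla u|^2+\gamma\nabla u\cdot\nabla(v-u)\). Integrating both, any \(u\in\K\) satisfying \eqref{eq:VI} is a minimizer, and hence equals \(u_\mu\) by Step 3.

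The only mildly delicate point is the measurability and lower semicontinuity of the Nemytskii term. This is handled by the quadratic growth bound from Step 1 combined with convexity, which I expect to be routine rather than a genuine obstacle.
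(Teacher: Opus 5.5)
Your proposal is correct and follows the paper's proof essentially step for step. Both arguments use coercivity from \eqref{eq:F-lower} and Poincar\'e, the direct method on the weakly closed cone, strict convexity of the Dirichlet term for uniqueness, a dominated one-sided derivative for \eqref{eq:VI}, and the convexity inequality for sufficiency; the only difference is that the paper mainly gets lower semicontinuity of the integral term from Rellich compactness (strong \(L^2\) convergence of the minimizing sequence) combined with continuity, whereas you use convexity plus strong continuity, which is exactly the alternative the paper itself mentions.
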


\begin{proof}
By \eqref{eq:F-lower}, H\"older's inequality, and \eqref{eq:poincare},
\[
\J_\mu(u)
\geq
\frac{\gamma}{2}\|u\|_V^2
-c_0|\Omega|^{1/2}C_P\|u\|_V
-\|b\|_{L^1},
\]
so \(\J_\mu\) is coercive on \(\K\).
Let \((u_n)\subset\K\) be a minimizing sequence.
After taking a subsequence, \(u_n\rightharpoonup u\) in \(V\) and \(u_n\to u\) in \(L^2(\Omega)\).
The cone \(\K\) is weakly closed.
The gradient term is weakly lower semicontinuous.
In fact, the integral term is continuous along the strong \(L^2\) convergence just obtained: the mean-value theorem and \eqref{eq:G-growth} give an integrable product bound, while \eqref{eq:F-zero} makes every value finite.
Alternatively, convexity and the standard lower-semicontinuity theorem for integral functionals give
\[
\int_\Omega F_\mu(x,u(x))\dd x
\leq\liminf_{n\to\infty}
\int_\Omega F_\mu(x,u_n(x))\dd x.
\]
Thus \(u\) minimizes \(\J_\mu\).

The map \(u\mapsto\frac\gamma2\|u\|_V^2\) is strictly convex, while the integral term is convex.
Therefore \(\J_\mu\) is strictly convex on \(\K\), and the minimizer is unique.
For \(v\in\K\), the segment \(u_\mu+t(v-u_\mu)\) remains in \(\K\).
Taking the right derivative of \(\J_\mu\) at \(t=0\), justified by \eqref{eq:G-growth}, gives \eqref{eq:VI}.
Conversely, convexity implies
\[
\J_\mu(v)-\J_\mu(u_\mu)
\geq
\int_\Omega
\gamma\nabla u_\mu\cdot\nabla(v-u_\mu)
+G_\mu(x,u_\mu)(v-u_\mu)\dd x,
\]
so every solution of \eqref{eq:VI} is the minimizer.
\end{proof}

\subsection{Wasserstein stability of the policy}

\begin{theorem}[Energy-space stability]
\label{thm:H1-stability}
Let \(\mu,\nu\in\mathfrak M\).
Under \cref{ass:reduced},
\begin{align}
\|u_\mu-u_\nu\|_V
&\leq
\frac{C_P\|\ell\|_{L^2(\Omega)}}{\gamma}
\Wass(\mu,\nu),
\label{eq:V-stability}\\
\|u_\mu-u_\nu\|_{H^1(\Omega)}
&\leq
\frac{C_P\sqrt{1+C_P^2}\,\|\ell\|_{L^2(\Omega)}}{\gamma}
\Wass(\mu,\nu).
\label{eq:H1-stability}
\end{align}
If \(\Omega=(0,1)\) and \(\Gamma_D=\{0\}\), then also
\begin{equation}
\|u_\mu-u_\nu\|_{L^\infty(0,1)}
\leq
\frac{\|\ell\|_{L^2(0,1)}}{\gamma}
\Wass(\mu,\nu).
\label{eq:Linfty-stability}
\end{equation}
\end{theorem}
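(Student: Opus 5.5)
The plan is to test the two variational inequalities against each other, which is the standard monotone-operator comparison. By \cref{thm:wellposed}, \(u_\mu\) and \(u_\nu\) satisfy \eqref{eq:VI} with data \(G_\mu\) and \(G_\nu\). Since both functions lie in \(\K\), I will take \(v=u_\nu\) in the inequality for \(u_\mu\) and \(v=u_\mu\) in the inequality for \(u_\nu\), then add. Writing \(w:=u_\mu-u_\nu\), this gives
\[
\gamma\|w\|_V^2+\int_\Omega\bigl(G_\mu(x,u_\mu)-G_\nu(x,u_\nu)\bigr)w\dd x\leq0 .
\]
All integrals are finite by \eqref{eq:G-growth}, because \(a_0\in L^2\) and \(u_\mu,u_\nu,w\in L^2\).

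Next I will split the integrand as
\[
\bigl(G_\mu(x,u_\mu)-G_\mu(x,u_\nu)\bigr)+\bigl(G_\mu(x,u_\nu)-G_\nu(x,u_\nu)\bigr).
\]
The first bracket times \(w\) is pointwise nonnegative by \eqref{eq:G-monotone}, so it can be dropped. The second bracket is bounded pointwise by \(\ell(x)\Wass(\mu,\nu)\) by \eqref{eq:G-measure}. Cauchy--Schwarz and \eqref{eq:poincare} then give
\[
\gamma\|w\|_V^2\leq\Wass(\mu,\nu)\|\ell\|_{L^2}\|w\|_{L^2}\leq C_P\|\ell\|_{L^2}\Wass(\mu,\nu)\|w\|_V .
\]
Dividing by \(\|w\|_V\) (the case \(w=0\) is trivial) proves \eqref{eq:V-stability}. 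For \eqref{eq:H1-stability}, I will combine \(\|w\|_{H^1}^2=\|w\|_{L^2}^2+\|\nabla w\|_{L^2}^2\leq(1+C_P^2)\|w\|_V^2\) with \eqref{eq:V-stability}.

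The one-dimensional bound \eqref{eq:Linfty-stability} needs a sharper argument, because the stated constant has no factor \(C_P\). On \((0,1)\) with \(w(0)=0\), I will use \(|w(x)|=\bigl|\int_0^x w'\bigr|\leq\|w'\|_{L^2}\). This gives \(\|w\|_{L^1}\leq\|w\|_{L^\infty}\leq\|w\|_V\). In the chain above I will estimate the forcing term by \(\|\ell\|_{L^2}\|w\|_{L^2}\) and then \(\|w\|_{L^2}\leq\|w\|_{L^\infty}\leq\|w\|_V\), which yields \(\gamma\|w\|_V\leq\|\ell\|_{L^2}\Wass(\mu,\nu)\). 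Then \(\|w\|_{L^\infty}\leq\|w\|_V\) gives \eqref{eq:Linfty-stability}.

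I expect this last step to be the only delicate point. The argument has to be routed through \(\|w\|_{L^2}\leq\|w\|_{L^\infty}\leq\|w\|_V\) instead of through the Poincaré constant, since that is what removes \(C_P\); this works because \(\Omega=(0,1)\) has unit length. Everything else is routine once both variational inequalities are tested against each other.
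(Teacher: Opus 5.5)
Your proposal is correct and follows the paper's proof essentially step by step: you test each variational inequality with the other solution, drop the monotone term, bound the measure term by \(\ell\,\Wass(\mu,\nu)\), and conclude with Cauchy--Schwarz and Poincar\'e. For the one-dimensional bound, the paper simply notes that \(C_P\le1\) is admissible on \((0,1)\) with \(v(0)=0\); your chain \(\|w\|_{L^2}\le\|w\|_{L^\infty}\le\|w\|_V\) is exactly the justification for that remark.
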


\begin{proof}
Use \(u_\nu\) as a test function in \eqref{eq:VI} for \(\mu\), and \(u_\mu\) as a test function in the corresponding inequality for \(\nu\).
With \(w=u_\mu-u_\nu\), addition gives
\[
\gamma\|w\|_V^2
+\int_\Omega
[G_\mu(x,u_\mu)-G_\nu(x,u_\nu)]w\dd x
\leq0.
\]
Insert and subtract \(G_\mu(x,u_\nu)\).
The term
\[
\int_\Omega
[G_\mu(x,u_\mu)-G_\mu(x,u_\nu)]w\dd x
\]
is nonnegative by \eqref{eq:G-monotone}.
Consequently, \eqref{eq:G-measure}, H\"older's inequality, and \eqref{eq:poincare} imply
\[
\gamma\|w\|_V^2
\leq
\|\ell\|_{L^2}\Wass(\mu,\nu)\|w\|_{L^2}
\leq
C_P\|\ell\|_{L^2}\Wass(\mu,\nu)\|w\|_V.
\]
This proves \eqref{eq:V-stability}; \eqref{eq:H1-stability} follows from Poincar\'e.
In one dimension, \(w(0)=0\) and the fundamental theorem for Sobolev functions gives
\[
|w(x)|\leq\int_0^1|w'(s)|\dd s\leq\|w'\|_{L^2(0,1)}.
\]
Since \(C_P\leq1\) is valid for the one-sided boundary condition on \((0,1)\), the same estimate yields \eqref{eq:Linfty-stability}.
\end{proof}

\begin{remark}
The proof uses no compactness in the measure variable and no differentiability of \(\mu\mapsto u_\mu\).
It is a strong-monotonicity estimate combined with the type-law bound \eqref{eq:G-measure}.
The constant is uniform over \(\mathfrak M\) because all reduced assumptions were imposed uniformly on that class.
\end{remark}

\subsection{A priori and value-function estimates}

The same monotonicity argument gives a uniform bound on the policies.

\begin{proposition}[Uniform policy bound]
\label{prop:uniform-policy-bound}
Under \cref{ass:reduced},
\begin{equation}
\|u_\mu\|_V
\leq\frac{C_P\|a_0\|_{L^2(\Omega)}}{\gamma}
\qquad(\mu\in\mathfrak M).
\label{eq:uniform-policy-bound}
\end{equation}
\end{proposition}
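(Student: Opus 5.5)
Test the variational inequality \eqref{eq:VI} with \(v=0\in\K\), which is admissible because the zero function has zero trace on \(\Gamma_D\) and is nonnegative. This gives
\[
\gamma\|u_\mu\|_V^2+\int_\Omega G_\mu(x,u_\mu)\,u_\mu\dd x\leq0 .
\]
Next use the monotonicity \eqref{eq:G-monotone} with \(z_1=u_\mu(x)\) and \(z_2=0\). Pointwise this yields \(G_\mu(x,u_\mu)u_\mu\geq G_\mu(x,0)u_\mu\), so
\[
\gamma\|u_\mu\|_V^2\leq-\int_\Omega G_\mu(x,0)\,u_\mu\dd x .
\]
This is the same insert-and-subtract step used in the proof of \cref{thm:H1-stability}, with \(u_\nu\) replaced by the trivial competitor \(0\).

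To control the right-hand side, apply the growth bound \eqref{eq:G-growth} at \(z=0\), which gives \(|G_\mu(x,0)|\leq a_0(x)\). Hölder's inequality and the Poincaré inequality \eqref{eq:poincare} then give
\[
\gamma\|u_\mu\|_V^2\leq\|a_0\|_{L^2}\|u_\mu\|_{L^2}\leq C_P\|a_0\|_{L^2}\|u_\mu\|_V .
\]
If \(u_\mu=0\), the bound is trivial. Otherwise, divide by \(\|u_\mu\|_V\) to obtain \eqref{eq:uniform-policy-bound}.

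The constant is independent of \(\mu\in\mathfrak M\) because \(a_0\) is uniform over the class by \cref{ass:reduced}. The only point needing care is integrability of \(G_\mu(x,u_\mu)u_\mu\), so that the VI testing is meaningful. This follows from \eqref{eq:G-growth} together with \(u_\mu\in L^2\), exactly as in \cref{thm:wellposed}. I expect no real obstacle here; the argument is a direct specialization of the stability proof.
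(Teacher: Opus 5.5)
Your proposal is correct and follows the paper's proof essentially line by line: test \eqref{eq:VI} with \(v=0\), apply the monotonicity \eqref{eq:G-monotone} between \(u_\mu\) and \(0\), bound \(|G_\mu(x,0)|\leq a_0(x)\) via \eqref{eq:G-growth}, and finish with H\"older, Poincar\'e, and cancellation. The paper additionally cites \(u_\mu\geq0\) in the monotonicity step, but, as your version shows, that sign is not actually needed there.
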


\begin{proof}
Use \(v=0\) in \eqref{eq:VI} and multiply the resulting inequality by \(-1\):
\[
\gamma\|u_\mu\|_V^2
+\int_\Omega G_\mu(x,u_\mu)u_\mu\dd x\leq0.
\]
Because \(u_\mu\geq0\), monotonicity implies
\[
[G_\mu(x,u_\mu)-G_\mu(x,0)]u_\mu\geq0.
\]
It follows from \eqref{eq:G-growth} and Poincar\'e that
\[
\gamma\|u_\mu\|_V^2
\leq-\int_\Omega G_\mu(x,0)u_\mu\dd x
\leq C_P\|a_0\|_{L^2}\|u_\mu\|_V.
\]
Cancellation proves the claim.
\end{proof}

Let \(m(\mu):=\min_{u\in\K}\J_\mu(u)\).
Policy stability does not automatically imply value stability unless the levels of the integrands are controlled as well as their derivatives.
The following explicit assumption supplies that control.

\begin{proposition}[Stability of the optimal value]
\label{prop:value-stability}
In addition to \cref{ass:reduced}, suppose there are \(r_0\in L^1(\Omega)\) and \(r_1\in L^2(\Omega)\) such that
\begin{equation}
|F_\mu(x,z)-F_\nu(x,z)|
\leq[r_0(x)+r_1(x)|z|]\Wass(\mu,\nu)
\label{eq:F-measure-level}
\end{equation}
for all \(z\) and almost every \(x\).
Then
\begin{equation}
|m(\mu)-m(\nu)|
\leq
\left(
\|r_0\|_{L^1}
+\frac{C_P^2\|r_1\|_{L^2}\|a_0\|_{L^2}}{\gamma}
\right)\Wass(\mu,\nu).
\label{eq:value-stability}
\end{equation}
\end{proposition}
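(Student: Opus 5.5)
The plan is the standard comparison argument for infima: test each problem with the other problem's minimizer. By \cref{thm:wellposed}, \(m(\mu)=\J_\mu(u_\mu)\) and \(m(\nu)=\J_\nu(u_\nu)\), with \(u_\mu,u_\nu\in\K\). Since \(u_\nu\) is admissible for \((P_\mu)\),
\[
m(\mu)-m(\nu)\leq \J_\mu(u_\nu)-\J_\nu(u_\nu)=\int_\Omega\bigl[F_\mu(x,u_\nu(x))-F_\nu(x,u_\nu(x))\bigr]\dd x .
\]
The gradient terms cancel exactly because they do not depend on the law. Symmetrically,
\[
m(\nu)-m(\mu)\leq\int_\Omega\bigl[F_\nu(x,u_\mu)-F_\mu(x,u_\mu)\bigr]\dd x .
\]
Hence \(|m(\mu)-m(\nu)|\) is bounded by the larger of the two integrals with absolute values inside. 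Each integrand is of the form \(|F_\mu(x,u)-F_\nu(x,u)|\) for \(u\in\{u_\mu,u_\nu\}\).

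Next I would apply \eqref{eq:F-measure-level} pointwise with \(z=u(x)\), giving
\[
\int_\Omega|F_\mu(x,u)-F_\nu(x,u)|\dd x\leq\bigl(\|r_0\|_{L^1}+\|r_1\|_{L^2}\|u\|_{L^2}\bigr)\Wass(\mu,\nu)
\]
by H\"older's inequality. The Poincar\'e inequality \eqref{eq:poincare} then gives \(\|u\|_{L^2}\leq C_P\|u\|_V\). \Cref{prop:uniform-policy-bound} gives \(\|u\|_V\leq C_P\|a_0\|_{L^2}/\gamma\) for both \(u_\mu\) and \(u_\nu\), since both laws lie in \(\mathfrak M\) and the bound is uniform. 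Multiplying the two gives \(\|u\|_{L^2}\leq C_P^2\|a_0\|_{L^2}/\gamma\), which is exactly the constant in \eqref{eq:value-stability}.

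The only point needing care is finiteness, so that the subtraction \(\J_\mu(u_\nu)-\J_\nu(u_\nu)\) is legitimate. I would check that \(\int_\Omega F_\mu(x,u)\dd x\) is finite for every \(u\in V\). Write \(F_\mu(x,u)=F_\mu(x,0)+\int_0^{u}G_\mu(x,s)\dd s\). The first term is controlled by \eqref{eq:F-zero}, and the second is at most \(a_0|u|+\tfrac{a_1}{2}u^2\) by \eqref{eq:G-growth}, which is integrable for \(u\in L^2\). So all values are finite real numbers and the estimate follows. I do not expect a genuine obstacle here; the bookkeeping of the constant via the uniform a priori bound is the main step.
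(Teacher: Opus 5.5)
Your proposal is correct and follows the paper's proof essentially verbatim. You test each problem with the other law's minimizer so that the gradient terms cancel, then bound the resulting integral using \eqref{eq:F-measure-level}, H\"older, Poincar\'e, and the uniform bound of \cref{prop:uniform-policy-bound}. Your finiteness check, writing \(F_\mu(x,u)=F_\mu(x,0)+\int_0^uG_\mu(x,s)\dd s\), is a sound detail that the paper leaves implicit (it notes finiteness only in passing in the proof of \cref{thm:wellposed}).
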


\begin{proof}
Optimality of \(u_\mu\) gives
\begin{align*}
m(\mu)-m(\nu)
&=\J_\mu(u_\mu)-\J_\nu(u_\nu)\\
&\leq\J_\mu(u_\nu)-\J_\nu(u_\nu)\\
&=\int_\Omega[F_\mu(x,u_\nu)-F_\nu(x,u_\nu)]\dd x.
\end{align*}
By \eqref{eq:F-measure-level}, H\"older's inequality, \eqref{eq:poincare}, and \eqref{eq:uniform-policy-bound}, the last line is at most the right-hand side of \eqref{eq:value-stability}.
Interchanging \(\mu\) and \(\nu\) completes the proof.
\end{proof}

\subsection{Complementarity}

The variational inequality already gives a weak obstacle formulation.
When elliptic regularity makes the residual a function, it becomes the familiar complementarity system.

\begin{proposition}[Complementarity system]
\label{prop:complementarity}
Assume that \(u_\mu\in H^2(\Omega)\) and that \(G_\mu(\cdot,u_\mu)\in L^2(\Omega)\).
Set
\[
\xi_\mu:=-\gamma\Delta u_\mu+G_\mu(x,u_\mu).
\]
Then, almost everywhere in \(\Omega\),
\begin{equation}
u_\mu\geq0,
\qquad
\xi_\mu\geq0,
\qquad
u_\mu\xi_\mu=0.
\label{eq:complementarity}
\end{equation}
The prescribed trace is \(u_\mu=0\) on \(\Gamma_D\).
No pure Neumann condition is asserted on \(\partial\Omega\setminus\Gamma_D\): because nonnegativity also constrains boundary traces, the general natural condition there is of Signorini type.
Conversely, an \(H^2\) function in \(\K\) satisfying \eqref{eq:complementarity} and whose Green boundary term is nonnegative against every admissible trace difference solves \eqref{eq:VI}.
\end{proposition}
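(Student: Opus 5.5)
The plan is to test the variational inequality \eqref{eq:VI} against well-chosen elements of \(\K\) and then integrate by parts with Green's formula, which is legitimate since \(u_\mu\in H^2(\Omega)\). For \(\varphi\in V\), Green's formula on the Lipschitz domain \(\Omega\) gives
\[
\int_\Omega\gamma\nabla u_\mu\cdot\nabla\varphi\dd x
=\int_\Omega(-\gamma\Delta u_\mu)\varphi\dd x
+\int_{\partial\Omega\setminus\Gamma_D}\gamma\,\partial_\nu u_\mu\,\Tr\varphi\dd\sigma,
\]
because \(\Tr\varphi=0\) on \(\Gamma_D\). Hence \eqref{eq:VI} reads
\[
\int_\Omega\xi_\mu(v-u_\mu)\dd x+\int_{\partial\Omega\setminus\Gamma_D}\gamma\,\partial_\nu u_\mu\,\Tr(v-u_\mu)\dd\sigma\geq0
\qquad\text{for every }v\in\K.
\]
Here \(\xi_\mu\in L^2(\Omega)\) by the hypotheses.

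\textbf{Interior sign.} Take \(v=u_\mu+\varphi\) with \(\varphi\in C_c^\infty(\Omega)\) and \(\varphi\geq0\). Then \(v\in\K\) and the boundary term vanishes, so \(\int\xi_\mu\varphi\geq0\). Density of such \(\varphi\) among nonnegative \(L^2\) functions gives \(\xi_\mu\geq0\) a.e. The condition \(u_\mu\geq0\) holds because \(u_\mu\in\K\).

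\textbf{Complementarity.} This is the delicate step, since the test \(v=0\) brings in the boundary term with an unknown sign. I would avoid that test and localize in the interior instead. Take \(\psi\in C_c^\infty(\Omega)\) with \(0\leq\psi\leq1\) and set \(v=(1-t\psi)u_\mu\) for \(t\in(0,1]\). This \(v\) lies in \(\K\) and agrees with \(u_\mu\) near \(\partial\Omega\). The boundary term drops, and we obtain \(-t\int\xi_\mu\psi u_\mu\geq0\), that is \(\int\xi_\mu\psi u_\mu\leq0\). Since \(\xi_\mu u_\mu\geq0\), letting \(\psi\uparrow1\) pointwise and using monotone convergence (\(\xi_\mu u_\mu\in L^1\)) yields \(\int\xi_\mu u_\mu=0\). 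Hence \(u_\mu\xi_\mu=0\) a.e.

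\textbf{Remaining claims and converse.} The trace condition \(u_\mu=0\) on \(\Gamma_D\) is built into \(V\). For the Signorini remark, it suffices to note that only the combined inequality above is available on \(\partial\Omega\setminus\Gamma_D\). For the converse, take \(u\in\K\cap H^2(\Omega)\) satisfying \eqref{eq:complementarity}, with the Green boundary term nonnegative for every admissible \(\Tr(v-u)\). Then for \(v\in\K\) we have \(\int\xi(v-u)=\int\xi v\geq0\), since \(\xi\geq0\), \(v\geq0\) and \(\xi u=0\). Adding the nonnegative boundary term and reversing Green's formula gives \eqref{eq:VI}. By \cref{thm:wellposed}, \(u\) is then the minimizer \(u_\mu\).

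The main obstacle is the complementarity step, where the boundary term must be avoided; the interior cutoff handles this. A secondary concern is the validity of Green's formula for \(H^2\) functions on Lipschitz domains, which is standard since the normal trace of \(\nabla u_\mu\in H^1\) lies in \(L^2(\partial\Omega)\).
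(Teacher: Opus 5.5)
Your proposal is correct and follows essentially the same route as the paper: nonnegative perturbations \(u_\mu+\varphi\) with \(\varphi\in C_c^\infty(\Omega)\) give \(\xi_\mu\geq0\), and a compactly supported downward perturbation, which avoids the boundary term, gives \(u_\mu\xi_\mu=0\) once combined with \(\xi_\mu u_\mu\geq0\) and monotone convergence. Your \((1-t\psi)u_\mu\) is the paper's \(u_\mu-\varphi\) with \(\varphi=t\psi u_\mu\), so the paper's truncation \(\min\{u_\mu,n\}\) is not needed, and your converse is the same computation \(\int\xi(v-u)=\int\xi v\geq0\) together with the assumed sign of the Green boundary term.
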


\begin{proof}
For every nonnegative \(\varphi\in C_c^\infty(\Omega)\), the competitor \(u_\mu+\varphi\) is admissible.
Equation \eqref{eq:VI} and integration by parts give \(\int\xi_\mu\varphi\geq0\), hence \(\xi_\mu\geq0\).
For \(0\leq\varphi\leq u_\mu\) with compact support, both \(u_\mu-\varphi\) and \(u_\mu+\varphi\) are admissible.
The two inequalities yield \(\int\xi_\mu\varphi=0\).
Approximation with \(\varphi=\min\{u_\mu,n\}\psi\), \(0\leq\psi\leq1\) and \(\psi\in C_c^\infty\), gives \(u_\mu\xi_\mu=0\).
Conversely, for \(v\in\K\),
\[
\int_\Omega\xi_\mu(v-u_\mu)\dd x
=\int_\Omega\xi_\mu v\dd x\geq0,
\]
and the assumed sign of the Green boundary term yields \eqref{eq:VI} after integration by parts.
\end{proof}

Classical obstacle regularity can now be invoked when its hypotheses hold.
For instance, with smooth domain, compatible boundary data, and bounded or H\"older forcing, standard results yield \(H^2\) or local \(C^{1,1}\) regularity; see \citet{KinderlehrerStampacchia1980,Rodrigues1987,PetrosyanShahgholianUraltseva2012}.
We do not require a multidimensional regularity theorem for the complete one-dimensional analysis below: its formula gives \(H^2\) and \(C^1\) regularity directly.

\section{One-dimensional obstacle structure}
\label{sec:one-dimensional}

We now specialize to an affine reduced integrand in one space dimension.
This is the minimal class in which the free boundary can be identified without importing a global regularity theory.
Let
\[
V=\{v\in H^1(0,1):v(0)=0\},
\qquad
\K=\{v\in V:v\geq0\text{ a.e.}\},
\]
and, for \(q_\mu\in C([0,1])\), consider
\begin{equation}
\J_\mu(u)
=\frac\gamma2\int_0^1|u'(x)|^2\dd x
-\int_0^1q_\mu(x)u(x)\dd x.
\label{eq:linear-energy}
\end{equation}
Thus \(F_\mu(x,z)=-q_\mu(x)z\) and \(G_\mu=-q_\mu\).
The natural boundary condition at \(x=1\) is \(u'(1)=0\).

\begin{assumption}[Single-crossing forcing]
\label{ass:single-crossing}
For every \(\mu\in\mathfrak M\), the forcing \(q_\mu\) is strictly increasing and there is \(c_\mu\in(0,1)\) such that
\begin{equation}
q_\mu(x)<0\quad(0\leq x<c_\mu),
\qquad
q_\mu(c_\mu)=0,
\qquad
q_\mu(x)>0\quad(c_\mu<x\leq1).
\label{eq:q-crossing}
\end{equation}
In addition,
\begin{equation}
\int_0^1q_\mu(s)\dd s<0.
\label{eq:q-negative-total}
\end{equation}
\end{assumption}

The negative total in \eqref{eq:q-negative-total} excludes a policy that is positive immediately to the right of the fixed boundary.
Positive forcing near \(x=1\) ensures a nonempty continuation region.
The assumptions are qualitative at this stage; their uniform quantitative versions enter in \cref{sec:fb-stability}.

Define the cumulative forcing
\begin{equation}
Q_\mu(x):=\int_x^1q_\mu(s)\dd s.
\label{eq:cumulative-forcing}
\end{equation}
Then \(Q_\mu(0)<0\), \(Q_\mu(c_\mu)>0\), \(Q_\mu(1)=0\), and \(Q_\mu'=-q_\mu\).

\begin{lemma}[The nontrivial switching point]
\label{lem:switching-point}
Under \cref{ass:single-crossing}, there is a unique \(b_\mu\in(0,c_\mu)\) such that
\begin{equation}
Q_\mu(b_\mu)=0.
\label{eq:threshold-equation}
\end{equation}
Moreover,
\begin{equation}
Q_\mu(x)<0\quad(0\leq x<b_\mu),
\qquad
Q_\mu(x)>0\quad(b_\mu<x<1).
\label{eq:Q-sign}
\end{equation}
\end{lemma}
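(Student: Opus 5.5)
The plan is a monotonicity argument for \(Q_\mu\) on the two sides of \(c_\mu\), followed by the intermediate value theorem. Since \(q_\mu\in C([0,1])\), \(Q_\mu\) is \(C^1\) with \(Q_\mu'=-q_\mu\).

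First, consider \([c_\mu,1]\). There \(q_\mu>0\) on \((c_\mu,1]\), so \(Q_\mu\) is strictly decreasing on \([c_\mu,1]\). Together with \(Q_\mu(1)=0\), this gives \(Q_\mu(x)>0\) for \(c_\mu\le x<1\); in particular \(Q_\mu(c_\mu)>0\), as recorded after \eqref{eq:cumulative-forcing}.

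Second, consider \([0,c_\mu]\). Here \(Q_\mu'=-q_\mu>0\) on \([0,c_\mu)\), so \(Q_\mu\) is strictly increasing on \([0,c_\mu]\). By \eqref{eq:q-negative-total}, \(Q_\mu(0)=\int_0^1q_\mu<0\), and by the first step \(Q_\mu(c_\mu)>0\). The intermediate value theorem yields a zero \(b_\mu\in(0,c_\mu)\), and strict monotonicity makes it the unique zero in \([0,c_\mu]\). It also gives \(Q_\mu<0\) on \([0,b_\mu)\) and \(Q_\mu>0\) on \((b_\mu,c_\mu]\).

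Finally, combining the two steps gives \eqref{eq:Q-sign} on all of \([0,1)\). Positivity on \([c_\mu,1)\) excludes any further zero there, so \(b_\mu\) is the unique zero of \(Q_\mu\) in \([0,1)\), and a fortiori the unique one in \((0,c_\mu)\).

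There is no real obstacle; the only point needing care is the endpoint behavior. Strict positivity of \(Q_\mu\) near \(1\) relies on \(q_\mu>0\) on \((c_\mu,1]\). The statement that \(b_\mu\) is interior, \(b_\mu\neq0\), relies on the strict inequality \eqref{eq:q-negative-total}. I use only the sign conditions \eqref{eq:q-crossing}, not the full strict monotonicity of \(q_\mu\).
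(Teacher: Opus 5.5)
Your proof is correct and follows essentially the same route as the paper: strict monotonicity of \(Q_\mu\) on \([0,c_\mu]\) combined with the intermediate value theorem between \(Q_\mu(0)<0\) and \(Q_\mu(c_\mu)>0\), plus positivity to the right of \(c_\mu\) from \(q_\mu>0\) there. The only cosmetic difference is on \((c_\mu,1)\): the paper reads off \(Q_\mu(x)=\int_x^1 q_\mu>0\) directly, whereas you deduce it from the strict decrease of \(Q_\mu\) toward \(Q_\mu(1)=0\).
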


\begin{proof}
On \([0,c_\mu]\), \(Q_\mu'=-q_\mu>0\) except at the right endpoint.
Since \(Q_\mu(0)<0<Q_\mu(c_\mu)\), there is exactly one zero \(b_\mu\in(0,c_\mu)\), and the asserted signs hold on \([0,c_\mu]\).
For \(c_\mu<x<1\),
\[
Q_\mu(x)=\int_x^1q_\mu(s)\dd s>0
\]
by \eqref{eq:q-crossing}.
\end{proof}

\begin{theorem}[Exact policy and interval contact set]
\label{thm:exact-one-dimensional}
Under \cref{ass:single-crossing}, the unique minimizer of \eqref{eq:linear-energy} is
\begin{equation}
u_\mu(x)
=\frac1\gamma
\int_0^x[Q_\mu(t)]_+\dd t
=
\begin{cases}
0, & 0\leq x\leq b_\mu,\\[0.3em]
\displaystyle\frac1\gamma\int_{b_\mu}^xQ_\mu(t)\dd t,
& b_\mu<x\leq1,
\end{cases}
\label{eq:exact-policy}
\end{equation}
where \([r]_+=\max\{r,0\}\).
It belongs to \(C^1([0,1])\cap H^2(0,1)\), satisfies \(u_\mu'(1)=0\), and has coincidence set and free boundary
\begin{equation}
\Lambda_\mu:=\{u_\mu=0\}=[0,b_\mu],
\qquad
\Gamma_\mu:=\partial\Lambda_\mu\cap(0,1)=\{b_\mu\}.
\label{eq:contact-free-boundary}
\end{equation}
The complementarity system is
\begin{equation}
u_\mu\geq0,
\qquad
-\gamma u_\mu''-q_\mu\geq0,
\qquad
u_\mu(-\gamma u_\mu''-q_\mu)=0
\quad\text{a.e. in }(0,1).
\label{eq:linear-complementarity}
\end{equation}
\end{theorem}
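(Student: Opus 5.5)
The plan is to verify directly that the candidate \(u_\mu\) in \eqref{eq:exact-policy} satisfies the variational inequality \eqref{eq:VI} with \(G_\mu=-q_\mu\). Uniqueness then follows from \cref{thm:wellposed}, which asserts that every solution of \eqref{eq:VI} is the unique minimizer. The affine integrand \(F_\mu(x,z)=-q_\mu(x)z\) meets \cref{ass:reduced} with \(a_1=0\), \(a_0=|q_\mu|\) bounded, and \(c_0=\|q_\mu\|_\infty\). Strictly speaking, only that theorem's convexity characterization is needed, and this holds for any fixed \(\mu\).

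\textbf{Step 1: the two forms of the candidate agree.} By \cref{lem:switching-point}, \([Q_\mu]_+=0\) on \([0,b_\mu]\) and \([Q_\mu]_+=Q_\mu\) on \([b_\mu,1]\), so the two expressions in \eqref{eq:exact-policy} coincide.

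\textbf{Step 2: regularity and the boundary condition.} Since \(Q_\mu\in C^1\), the function \(u_\mu'=\gamma^{-1}[Q_\mu]_+\) is continuous and Lipschitz. Hence \(u_\mu\in C^1([0,1])\cap H^2(0,1)\), with \(u_\mu(0)=0\) and \(u_\mu'(1)=\gamma^{-1}[Q_\mu(1)]_+=0\).

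\textbf{Step 3: the coincidence set.} We have \(u_\mu\ge0\), and \(u_\mu=0\) on \([0,b_\mu]\). Because \(Q_\mu>0\) on \((b_\mu,1)\), \(u_\mu\) is strictly positive for \(x>b_\mu\). This gives \eqref{eq:contact-free-boundary}.

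\textbf{Step 4: complementarity.} Differentiating a.e., \(-\gamma u_\mu''-q_\mu\) equals \(-q_\mu\) on \((0,b_\mu)\) and \(q_\mu-q_\mu=0\) on \((b_\mu,1)\), using \(Q_\mu'=-q_\mu\). Since \(b_\mu<c_\mu\), we have \(q_\mu<0\) on \((0,b_\mu)\), so the residual is nonnegative. It vanishes wherever \(u_\mu>0\), which gives \eqref{eq:linear-complementarity}.

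\textbf{Step 5: verification of \eqref{eq:VI}, the main step.} This is where the boundary term at \(x=1\) must be handled. Integrating by parts on \((0,1)\) for \(v\in\K\) gives
\[
\int_0^1\gamma u_\mu'(v-u_\mu)'-q_\mu(v-u_\mu)\dd x
=\bigl[\gamma u_\mu'(v-u_\mu)\bigr]_0^1+\int_0^1(-\gamma u_\mu''-q_\mu)(v-u_\mu)\dd x .
\]
The boundary term vanishes, because \(u_\mu'(1)=0\) and \((v-u_\mu)(0)=0\). The remaining integral equals \(\int_0^{b_\mu}(-q_\mu)v\dd x\ge0\), because the residual is zero on the positivity set and \(u_\mu=0\) on \([0,b_\mu]\). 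This is the converse direction of \cref{prop:complementarity}, with the Green boundary term identically zero.

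Alternatively, Step 5 can be done without \(H^2\) integration by parts. Write \(\int_0^1 q_\mu w\dd x=\int_0^1 Q_\mu w'\dd x\) for \(w\in V\); this follows from \(Q_\mu(1)=0\), \(w(0)=0\), and \(Q_\mu'=-q_\mu\). The left-hand side of \eqref{eq:VI} then becomes
\[
\int_0^1\bigl([Q_\mu]_+-Q_\mu\bigr)(v-u_\mu)'\dd x=\int_0^{b_\mu}(-Q_\mu)v'\dd x ,
\]
since \(u_\mu'=0\) on \((0,b_\mu)\). A final integration by parts, using \(Q_\mu(b_\mu)=0\) and \(v(0)=0\), turns this into \(\int_0^{b_\mu}(-q_\mu)v\dd x\ge0\). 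This cumulative-forcing identity is the cleanest route, and it explains the positive-part formula.
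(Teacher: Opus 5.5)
Your proposal is correct and follows essentially the same route as the paper: check the regularity and complementarity of the explicit candidate directly, then integrate by parts using \(u_\mu'(1)=0\) and \((v-u_\mu)(0)=0\) to reduce the variational inequality to \(\int_0^{b_\mu}(-q_\mu)v\dd x\ge0\), and obtain uniqueness from convexity. Your alternative verification of Step~5 through the identity \(\int_0^1 q_\mu w\dd x=\int_0^1 Q_\mu w'\dd x\) is also valid and avoids the \(H^2\) integration by parts, but it is a minor variant of the same argument rather than a different strategy.
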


\begin{proof}
Define \(u_\mu\) by \eqref{eq:exact-policy}.
The sign description \eqref{eq:Q-sign} gives
\[
u_\mu'(x)=\gamma^{-1}[Q_\mu(x)]_+.
\]
Since \(Q_\mu\) is continuously differentiable and vanishes at \(b_\mu\) and \(1\), the derivative is continuous and \(u_\mu'(1)=0\).
The weak second derivative equals \(-q_\mu/\gamma\) on \((b_\mu,1)\) and zero on \((0,b_\mu)\), hence belongs to \(L^2(0,1)\).

On \((b_\mu,1)\), one has \(u_\mu>0\) and
\[
-\gamma u_\mu''-q_\mu=0.
\]
On \((0,b_\mu)\), \(u_\mu=0\) and \(q_\mu<0\) because \(b_\mu<c_\mu\), so
\[
-\gamma u_\mu''-q_\mu=-q_\mu>0.
\]
Thus \eqref{eq:linear-complementarity} holds.
For any \(v\in\K\), integration by parts and \(u_\mu'(1)=0\) give
\begin{align*}
\int_0^1\gamma u_\mu'(v-u_\mu)'-q_\mu(v-u_\mu)\dd x
&=\int_0^1(-\gamma u_\mu''-q_\mu)(v-u_\mu)\dd x\\
&=\int_0^{b_\mu}(-q_\mu)v\dd x\geq0.
\end{align*}
Hence \(u_\mu\) solves the variational inequality.
Coercivity and strict convexity of \eqref{eq:linear-energy} on \(\K\) make this minimizer unique.
The strict sign of \(Q_\mu\) to the right of \(b_\mu\) makes \(u_\mu\) strictly increasing there, proving \eqref{eq:contact-free-boundary}.
\end{proof}

\begin{corollary}[Quadratic nondegeneracy]
\label{cor:quadratic-growth}
If \(q_\mu\) is continuous, then
\begin{equation}
u_\mu(b_\mu+h)
=\frac{-q_\mu(b_\mu)}{2\gamma}h^2+o(h^2)
\qquad(h\downarrow0).
\label{eq:quadratic-growth}
\end{equation}
In particular, \(-q_\mu(b_\mu)>0\).
\end{corollary}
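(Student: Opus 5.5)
The plan is to work directly from the exact formula \eqref{eq:exact-policy} of \cref{thm:exact-one-dimensional}. For $h>0$ small, $b_\mu+h$ lies in the continuation region, so
\[
u_\mu(b_\mu+h)=\frac1\gamma\int_{b_\mu}^{b_\mu+h}Q_\mu(t)\dd t .
\]
The expansion then comes from a second-order Taylor expansion of $Q_\mu$ at $b_\mu$. By \eqref{eq:threshold-equation}, $Q_\mu(b_\mu)=0$. Since $q_\mu$ is continuous, $Q_\mu\in C^1$ with $Q_\mu'=-q_\mu$.

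The steps are as follows.

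\begin{enumerate}
\item Write $Q_\mu(t)=\int_{b_\mu}^{t}(-q_\mu(s))\dd s$ for $t\ge b_\mu$.
\item Set $q_\mu(s)=q_\mu(b_\mu)+\varepsilon(s)$, where $\varepsilon(s)\to0$ as $s\downarrow b_\mu$ by continuity. This gives
\[
Q_\mu(t)=-q_\mu(b_\mu)(t-b_\mu)+o(t-b_\mu).
\]
The remainder is bounded by $(t-b_\mu)\sup_{[b_\mu,t]}|\varepsilon|$.
\item Integrate over $[b_\mu,b_\mu+h]$. The main term is $-q_\mu(b_\mu)h^2/(2\gamma)$. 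The error is at most
\[
\gamma^{-1}\,\frac{h^2}{2}\,\sup_{[b_\mu,b_\mu+h]}|\varepsilon| = o(h^2).
\]
This establishes \eqref{eq:quadratic-growth}. Equivalently, one can cite that $u_\mu\in C^1$ with $u_\mu(b_\mu)=u_\mu'(b_\mu)=0$ and that $u_\mu''=-q_\mu/\gamma$ on $(b_\mu,1)$ has a continuous one-sided limit; a one-sided Taylor expansion then gives the result.
\end{enumerate}

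For the sign claim, \cref{lem:switching-point} gives $b_\mu\in(0,c_\mu)$. Under \cref{ass:single-crossing}, $q_\mu<0$ on $[0,c_\mu)$, so $q_\mu(b_\mu)<0$, that is, $-q_\mu(b_\mu)>0$.

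A strict monotonicity argument gives an alternative route that avoids evaluating $q_\mu$ at the point directly. Suppose instead that $q_\mu(b_\mu)\ge0$. Since $q_\mu$ is strictly increasing, $q_\mu>0$ on $(b_\mu,1]$. Then $Q_\mu(b_\mu)=\int_{b_\mu}^1q_\mu>0$, which contradicts $Q_\mu(b_\mu)=0$.

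There is no real obstacle here. The only point needing care is uniformity of the $o(h^2)$ remainder, which needs just continuity of $q_\mu$ from the right at $b_\mu$. Note also that the leading coefficient is strictly positive, so the growth is genuinely quadratic rather than degenerate.
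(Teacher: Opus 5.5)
Your proposal is correct and follows the paper's proof: the paper likewise uses $Q_\mu(b_\mu)=0$ and $Q_\mu'(b_\mu)=-q_\mu(b_\mu)>0$ (the sign coming from $b_\mu<c_\mu$) to write $Q_\mu(b_\mu+t)=-q_\mu(b_\mu)t+o(t)$, and then integrates \eqref{eq:exact-policy} from $0$ to $h$. Your explicit remainder bound $\gamma^{-1}\tfrac{h^2}{2}\sup_{[b_\mu,b_\mu+h]}|\varepsilon|$ just makes the paper's $o(h^2)$ quantitative; note that the expansion you use is first order in $Q_\mu$, not second order.
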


\begin{proof}
Because \(Q_\mu(b_\mu)=0\) and \(Q_\mu'(b_\mu)=-q_\mu(b_\mu)>0\),
\[
Q_\mu(b_\mu+t)=-q_\mu(b_\mu)t+o(t).
\]
Integrating from \(0\) to \(h\) in \eqref{eq:exact-policy} proves the result.
\end{proof}

The distinction between \eqref{eq:quadratic-growth} and threshold transversality is important.
The solution necessarily touches the obstacle with zero first derivative; the nondegenerate object is the switching function \(Q_\mu\), not \(u_\mu\) itself.

\subsection{A broader switching-function representation}

Strict monotonicity of \(q_\mu\) is a convenient primitive route to one interval, but the formula remains valid under a weaker sign condition.

\begin{proposition}[Sign-based representation]
\label{prop:sign-representation}
Let \(Q\in C^1([0,1])\) satisfy \(Q(1)=0\).
Suppose there is \(b\in(0,1)\) with \(Q<0\) on \([0,b)\), \(Q>0\) on \((b,1)\), and \(Q'\geq0\) on \([0,b]\).
Set \(q=-Q'\) and define \(u=\gamma^{-1}\int_0^x[Q(t)]_+\dd t\).
Then \(u\) is the unique minimizer of \(\frac\gamma2\int|u'|^2-\int qu\) over \(\K\), and its free boundary is \(\{b\}\).
\end{proposition}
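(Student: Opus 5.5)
The plan is to rerun the verification argument from the proof of \cref{thm:exact-one-dimensional}, using only the sign hypotheses on \(Q\) in place of the single-crossing structure of \(q\).

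\textbf{Formula and regularity.} By the sign assumptions, \([Q]_+\) vanishes on \([0,b]\) and equals \(Q\) on \([b,1]\). Since \(Q\in C^1\) and \(Q(b)=0\) by continuity, \([Q]_+\) is continuous and Lipschitz. Hence \(u\in C^1([0,1])\), \(u(0)=0\), and \(u'=\gamma^{-1}[Q]_+\). The weak second derivative is
\[
u''=\gamma^{-1}Q'\mathbf 1_{(b,1)}=-\gamma^{-1}q\mathbf 1_{(b,1)}\in L^\infty,
\]
and \(u'(1)=\gamma^{-1}[Q(1)]_+=0\). Moreover \(u\geq0\), \(u=0\) on \([0,b]\), and \(u'>0\) on \((b,1)\). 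So \(u\) is strictly increasing on \([b,1]\), and \(\{u=0\}=[0,b]\) with free boundary \(\{b\}\), provided \(b\) is indeed the minimizer.

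\textbf{Variational inequality.} Fix \(v\in\K\) and integrate by parts on \((0,1)\). The boundary terms vanish because \(u'(1)=0\) and \(v(0)=u(0)=0\). This gives
\[
\int_0^1\gamma u'(v-u)'-q(v-u)\dd x=\int_0^1(-\gamma u''-q)(v-u)\dd x.
\]
On \((b,1)\) the residual \(-\gamma u''-q\) is zero. On \((0,b)\) it equals \(-q=Q'\geq0\), and there \(v-u=v\geq0\). The integral is therefore nonnegative, so \(u\) solves \eqref{eq:VI} with \(G=-q\).

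\textbf{Uniqueness.} Since \(F(x,z)=-q(x)z\) is affine in \(z\), \cref{ass:reduced} holds trivially with \(a_0=|q|\), \(a_1=0\), and \(b=0\), \(c_0=\|q\|_\infty\) in \eqref{eq:F-lower}. \Cref{thm:wellposed} then gives a unique minimizer characterized by \eqref{eq:VI}, and this minimizer must be \(u\). Alternatively, uniqueness follows directly from strict convexity of \(\frac\gamma2\|u'\|^2\) on \(V\), using \(v(0)=0\).

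\textbf{Main obstacle.} The only delicate point is the region \((0,b)\): here we no longer know \(q<0\), only \(Q'\geq0\), so the residual may vanish on sets of positive measure. This is harmless, because the variational inequality needs only nonnegativity of the residual, not strict positivity. Likewise, the free boundary is read off from positivity of \(u'\) on \((b,1)\), not from strict complementarity.
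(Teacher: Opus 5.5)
Your proposal is correct and follows the paper's route: the paper says only that the proof of \cref{thm:exact-one-dimensional} applies verbatim, since the residual $-\gamma u''-q$ vanishes on $(b,1)$ and equals $-q=Q'\geq0$ on the contact set, and you carry out that same verification in detail. One small slip: ``provided $b$ is indeed the minimizer'' should read ``provided $u$ is indeed the minimizer.''
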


\begin{proof}
The proof of \cref{thm:exact-one-dimensional} applies verbatim.
On the contact set, \(q=-Q'\leq0\), and on the positivity set \(-\gamma u''-q=0\).
\end{proof}

This slightly broader form will be used to construct the degenerate counterexample in \cref{sec:fb-stability}.

\section{Quantitative free-boundary stability}
\label{sec:fb-stability}

We first record that the one-dimensional policy enjoys a stronger estimate than the general energy-space bound.

\begin{proposition}[Uniform \(C^1\) stability]
\label{prop:C1-stability}
Let \(q_\mu,q_\nu\) satisfy \cref{ass:single-crossing}, with cumulative forcings \(Q_\mu,Q_\nu\).
Then
\begin{align}
\|u_\mu'-u_\nu'\|_{L^\infty(0,1)}
&\leq\frac1\gamma\|Q_\mu-Q_\nu\|_{L^\infty(0,1)},
\label{eq:derivative-stability}\\
\|u_\mu-u_\nu\|_{L^\infty(0,1)}
&\leq\frac1\gamma\|Q_\mu-Q_\nu\|_{L^\infty(0,1)}.
\label{eq:u-switch-stability}
\end{align}
If
\begin{equation}
\|q_\mu-q_\nu\|_{L^\infty(0,1)}
\leq L_q\Wass(\mu,\nu),
\label{eq:q-W1}
\end{equation}
then the right sides of \eqref{eq:derivative-stability} and \eqref{eq:u-switch-stability} are bounded by \(L_q\Wass(\mu,\nu)/\gamma\).
\end{proposition}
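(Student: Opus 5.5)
We would rely entirely on the exact formula of \cref{thm:exact-one-dimensional}, namely \(u_\mu'=\gamma^{-1}[Q_\mu]_+\) and \(u_\mu(x)=\gamma^{-1}\int_0^x[Q_\mu(t)]_+\dd t\). No variational argument is needed once this representation is available, because the positive part \(r\mapsto[r]_+\) is \(1\)-Lipschitz on \(\R\).

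The first step is the derivative estimate \eqref{eq:derivative-stability}. For every \(x\in[0,1]\),
\[
|u_\mu'(x)-u_\nu'(x)|=\gamma^{-1}\bigl|[Q_\mu(x)]_+-[Q_\nu(x)]_+\bigr|\leq\gamma^{-1}|Q_\mu(x)-Q_\nu(x)|.
\]
Taking the supremum over \(x\) gives the bound, and since both derivatives are continuous by \cref{thm:exact-one-dimensional} the supremum is an honest sup norm.

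The second step is \eqref{eq:u-switch-stability}. Since \(u_\mu(0)=u_\nu(0)=0\), for every \(x\in[0,1]\) we have
\[
|u_\mu(x)-u_\nu(x)|\leq\int_0^x|u_\mu'-u_\nu'|\dd t\leq x\,\|u_\mu'-u_\nu'\|_{L^\infty}\leq\|u_\mu'-u_\nu'\|_{L^\infty},
\]
using \(x\leq1\). Combining this with the first step gives the claim.

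The last step converts the forcing estimate \eqref{eq:q-W1} into a bound on the cumulative forcings. By \eqref{eq:cumulative-forcing},
\[
|Q_\mu(x)-Q_\nu(x)|\leq\int_x^1|q_\mu-q_\nu|\dd s\leq(1-x)\|q_\mu-q_\nu\|_{L^\infty}\leq L_q\Wass(\mu,\nu).
\]
Inserting this into the first two steps yields the stated \(L_q\Wass(\mu,\nu)/\gamma\) bounds.

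I do not expect a real obstacle here. The only points that need care are that the representation \(u'=\gamma^{-1}[Q]_+\) holds for both laws at once, which is where \cref{ass:single-crossing} is used for \(\mu\) and \(\nu\) separately, and that the lengths \(x\) and \(1-x\) of the integration intervals are at most one, so no extra constant appears. In particular, the argument never needs to compare the thresholds \(b_\mu\) and \(b_\nu\), because the positive part absorbs the mismatch between the two contact sets.
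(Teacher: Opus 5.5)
Your proof is correct and follows the paper's argument essentially step for step. The paper also uses the exact formula $u_\mu'=\gamma^{-1}[Q_\mu]_+$ with the $1$-Lipschitz positive part, integrates from $u_\mu(0)=u_\nu(0)=0$ over an interval of length at most one, and bounds $|Q_\mu(x)-Q_\nu(x)|$ by $\int_x^1|q_\mu-q_\nu|\,\mathrm{d}s$ to pass from \eqref{eq:q-W1} to the cumulative forcings, never comparing $b_\mu$ with $b_\nu$.
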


\begin{proof}
By \eqref{eq:exact-policy}, \(u_\mu'=\gamma^{-1}[Q_\mu]_+\).
The positive-part map is 1-Lipschitz, proving \eqref{eq:derivative-stability}.
Since \(u_\mu(0)=u_\nu(0)=0\), integration over an interval of length at most one proves \eqref{eq:u-switch-stability}.
Finally,
\[
|Q_\mu(x)-Q_\nu(x)|
\leq\int_x^1|q_\mu(s)-q_\nu(s)|\dd s
\leq L_q\Wass(\mu,\nu).
\]
\end{proof}

Policy stability alone does not locate a zero set.
We first record the square-root estimate furnished by state nondegeneracy, and then impose the stronger geometric condition directly on \(Q_\mu\).

\subsection{A square-root estimate from state nondegeneracy}

The next result formalizes the quadratic-growth mechanism associated with obstacle problems.

\begin{assumption}[Uniform quadratic growth]
\label{ass:uniform-quadratic}
There are \(c_*>0\) and \(\rho>0\) such that, for every \(\mu\in\mathfrak M\),
\begin{equation}
u_\mu(x)\geq c_*(x-b_\mu)^2
\quad\text{whenever }b_\mu\leq x\leq\min\{b_\mu+\rho,1\}.
\label{eq:uniform-quadratic}
\end{equation}
\end{assumption}

\begin{proposition}[Square-root boundary control]
\label{prop:sqrt-boundary}
Under \cref{ass:single-crossing,ass:uniform-quadratic}, let \(\mu,\nu\in\mathfrak M\) satisfy \(|b_\mu-b_\nu|\leq\rho\).
Then
\begin{equation}
|b_\mu-b_\nu|
\leq
\left(
\frac{\|u_\mu-u_\nu\|_{L^\infty(0,1)}}{c_*}
\right)^{1/2}.
\label{eq:sqrt-state-boundary}
\end{equation}
If \eqref{eq:q-W1} holds, then
\begin{equation}
|b_\mu-b_\nu|
\leq
\left(\frac{L_q}{\gamma c_*}\right)^{1/2}
\Wass(\mu,\nu)^{1/2}.
\label{eq:sqrt-W1-boundary}
\end{equation}
\end{proposition}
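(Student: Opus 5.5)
By symmetry we may assume \(b_\mu<b_\nu\); if the two boundaries coincide there is nothing to prove. The plan is to evaluate both policies at the single point \(x=b_\nu\). This point lies in the contact set of \(\nu\), so \(u_\nu(b_\nu)=0\) by \eqref{eq:contact-free-boundary} of \cref{thm:exact-one-dimensional}. It also lies in the growth window of \(\mu\): we have \(b_\mu<b_\nu\leq1\), and the hypothesis \(|b_\mu-b_\nu|\leq\rho\) gives \(b_\nu\leq\min\{b_\mu+\rho,1\}\). \Cref{ass:uniform-quadratic} applied to \(\mu\) then gives
\[
c_*(b_\nu-b_\mu)^2\leq u_\mu(b_\nu)=u_\mu(b_\nu)-u_\nu(b_\nu)\leq\|u_\mu-u_\nu\|_{L^\infty(0,1)}.
\]
Dividing by \(c_*\) and taking square roots proves \eqref{eq:sqrt-state-boundary}. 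Because \(u_\mu\) and \(u_\nu\) are continuous by \cref{thm:exact-one-dimensional}, the pointwise evaluation is legitimate and is dominated by the sup norm.

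For \eqref{eq:sqrt-W1-boundary}, we insert the sup-norm bound of \cref{prop:C1-stability}. Under \eqref{eq:q-W1}, estimate \eqref{eq:u-switch-stability} gives \(\|u_\mu-u_\nu\|_{L^\infty}\leq L_q\Wass(\mu,\nu)/\gamma\). Substituting this into \eqref{eq:sqrt-state-boundary} gives the stated constant \((L_q/(\gamma c_*))^{1/2}\).

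The only delicate step is checking that the comparison point falls inside the window where the quadratic growth is assumed. This is exactly why the hypothesis \(|b_\mu-b_\nu|\leq\rho\) is imposed, and why we evaluate the policy with the \emph{smaller} boundary at the \emph{larger} one rather than the other way round. The remainder of the argument is direct.
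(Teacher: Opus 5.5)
Your proposal is correct and follows essentially the same argument as the paper: evaluate the policy with the smaller threshold at the larger threshold, where the other policy vanishes, so uniform quadratic growth gives \(c_*(b_\nu-b_\mu)^2\leq\|u_\mu-u_\nu\|_{L^\infty}\). The Wasserstein version then follows from the sup-norm estimate \eqref{eq:u-switch-stability} of \cref{prop:C1-stability}.
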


\begin{proof}
Assume \(b_\mu\leq b_\nu\); the other case is symmetric.
Because \(u_\nu(b_\nu)=0\), the uniform quadratic lower bound for \(u_\mu\) gives
\[
c_*(b_\nu-b_\mu)^2
\leq u_\mu(b_\nu)
=|u_\mu(b_\nu)-u_\nu(b_\nu)|
\leq\|u_\mu-u_\nu\|_\infty.
\]
Taking square roots proves \eqref{eq:sqrt-state-boundary}.
Equation \eqref{eq:sqrt-W1-boundary} follows from \cref{prop:C1-stability}.
\end{proof}

\begin{proposition}[A primitive route to uniform quadratic growth]
\label{prop:primitive-quadratic}
Suppose there are \(\kappa_*>0\) and \(\rho>0\) such that
\begin{equation}
-q_\mu(x)\geq\kappa_*
\quad\text{for }b_\mu\leq x\leq\min\{b_\mu+\rho,1\}
\label{eq:q-right-negative}
\end{equation}
for every \(\mu\in\mathfrak M\).
Then \cref{ass:uniform-quadratic} holds with \(c_*=\kappa_*/(2\gamma)\).
\end{proposition}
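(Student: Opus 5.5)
The argument works entirely from the exact formula \eqref{eq:exact-policy}. Fix \(\mu\in\mathfrak M\) and let \(b_\mu\le x\le\min\{b_\mu+\rho,1\}\). By \cref{thm:exact-one-dimensional} and \cref{lem:switching-point},
\[
u_\mu(x)=\frac1\gamma\int_{b_\mu}^x Q_\mu(t)\dd t ,
\]
because \(Q_\mu\ge0\) on \([b_\mu,1]\), so the positive part can be dropped there. Since \(Q_\mu(b_\mu)=0\) and \(Q_\mu'=-q_\mu\), we have, for \(b_\mu\le t\le x\),
\[
Q_\mu(t)=\int_{b_\mu}^t(-q_\mu(s))\dd s\ \ge\ \kappa_*(t-b_\mu).
\]
This uses \eqref{eq:q-right-negative}, which applies because \(s\in[b_\mu,t]\subset[b_\mu,\min\{b_\mu+\rho,1\}]\).

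Inserting this lower bound into the formula for \(u_\mu\) gives
\[
u_\mu(x)\ \ge\ \frac{\kappa_*}{\gamma}\int_{b_\mu}^x(t-b_\mu)\dd t=\frac{\kappa_*}{2\gamma}(x-b_\mu)^2 .
\]
This is exactly \eqref{eq:uniform-quadratic} with \(c_*=\kappa_*/(2\gamma)\) and the same \(\rho\). The constants \(\kappa_*\) and \(\rho\) are uniform over \(\mathfrak M\) by hypothesis, so \(c_*\) is uniform as well.

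There is no real obstacle in this argument. The only point needing care is that the positive-part formula reduces to the plain integral of \(Q_\mu\) on \([b_\mu,x]\), and this follows from the sign description \eqref{eq:Q-sign}. One might also note that the hypothesis forces \(b_\mu+\rho\le c_\mu\) in effect, but the proof does not use this.
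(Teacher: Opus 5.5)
Your proof is correct and matches the paper's argument: integrate $-q_\mu\geq\kappa_*$ from $b_\mu$ to get $Q_\mu(b_\mu+t)\geq\kappa_* t$, then integrate once more in the exact formula \eqref{eq:exact-policy} to obtain $u_\mu(b_\mu+t)\geq\kappa_* t^2/(2\gamma)$. Your side remark is also right: under the single-crossing assumption the hypothesis forces $b_\mu+\rho<c_\mu$. As you note, the proof does not need it.
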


\begin{proof}
For \(0\leq t\leq\rho\) with \(b_\mu+t\leq1\),
\[
Q_\mu(b_\mu+t)
=\int_0^t[-q_\mu(b_\mu+s)]\dd s
\geq\kappa_*t.
\]
Integrating once more in \eqref{eq:exact-policy} gives
\[
u_\mu(b_\mu+t)
\geq\frac{\kappa_*}{\gamma}\int_0^t s\dd s
=\frac{\kappa_*}{2\gamma}t^2.
\]
\end{proof}

The square-root exponent follows from state stability plus quadratic growth.
It is not the best estimate when the cumulative switching equation can be compared directly.

\subsection{Lipschitz control from switching transversality}

\begin{assumption}[Uniform switching transversality]
\label{ass:transversality}
There are a compact interval \(I=[\underline b,\overline b]\subset(0,1)\) and \(\kappa>0\) such that \(b_\mu\in I\) for every \(\mu\in\mathfrak M\) and
\begin{equation}
Q_\mu'(x)=-q_\mu(x)\geq\kappa
\quad\text{for all }x\in I,
\quad \mu\in\mathfrak M.
\label{eq:uniform-Q-slope}
\end{equation}
\end{assumption}

\begin{theorem}[Lipschitz stability of the free boundary]
\label{thm:threshold-stability}
Suppose \cref{ass:single-crossing,ass:transversality} hold and \eqref{eq:q-W1} is valid uniformly over \(\mathfrak M\).
Then
\begin{equation}
|b_\mu-b_\nu|
\leq \frac{L_q}{\kappa}\Wass(\mu,\nu)
\qquad(\mu,\nu\in\mathfrak M).
\label{eq:threshold-W1}
\end{equation}
Equivalently, because each free boundary is a singleton,
\[
d_H(\Gamma_\mu,\Gamma_\nu)
\leq \frac{L_q}{\kappa}\Wass(\mu,\nu).
\]
\end{theorem}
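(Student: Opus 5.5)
The plan is a direct comparison of the two switching equations on the compact interval \(I\), using the uniform slope bound instead of the state-growth route of \cref{prop:sqrt-boundary}. Fix \(\mu,\nu\in\mathfrak M\). By \cref{lem:switching-point}, \(Q_\mu(b_\mu)=0\) and \(Q_\nu(b_\nu)=0\). By \cref{ass:transversality}, both \(b_\mu\) and \(b_\nu\) lie in \(I\), which is an interval, so the whole segment between them lies in \(I\).

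The first step is a mean-value (or fundamental theorem of calculus) lower bound for \(Q_\mu\) on that segment. Assume without loss of generality that \(b_\mu\le b_\nu\). Then
\[
Q_\mu(b_\nu)=Q_\mu(b_\nu)-Q_\mu(b_\mu)=\int_{b_\mu}^{b_\nu}Q_\mu'(s)\dd s\ge\kappa\,(b_\nu-b_\mu),
\]
by \eqref{eq:uniform-Q-slope}; note that \(Q_\mu\in C^1\) because \(q_\mu\) is continuous. The second step exploits the fact that \(Q_\nu(b_\nu)=0\):
\[
\kappa|b_\mu-b_\nu|\le |Q_\mu(b_\nu)-Q_\nu(b_\nu)|\le\|Q_\mu-Q_\nu\|_{L^\infty(0,1)}.
\]
The case \(b_\nu\le b_\mu\) is symmetric, with \(\nu\) in place of \(\mu\).

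The third step bounds the sup-norm of the cumulative forcing difference exactly as at the end of the proof of \cref{prop:C1-stability}. For any \(x\in[0,1]\),
\[
|Q_\mu(x)-Q_\nu(x)|\le\int_x^1|q_\mu-q_\nu|\dd s\le\|q_\mu-q_\nu\|_{L^\infty}.
\]
By \eqref{eq:q-W1} this is at most \(L_q\Wass(\mu,\nu)\). Dividing by \(\kappa\) gives \eqref{eq:threshold-W1}. The Hausdorff reformulation is then immediate from \eqref{eq:contact-free-boundary}: \(\Gamma_\mu=\{b_\mu\}\) and \(\Gamma_\nu=\{b_\nu\}\), and the Hausdorff distance between two singletons is just the distance between their points.

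There is no serious obstacle here. The only point needing care is that the slope bound is applied along the segment joining the two roots, and this requires both roots to lie in the same interval \(I\) on which the uniform transversality holds. This is precisely why \cref{ass:transversality} places all the \(b_\mu\) in a common compact interval rather than imposing only a pointwise slope condition at each root.
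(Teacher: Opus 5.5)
Your proposal is correct and is essentially the paper's proof: you integrate the uniform slope bound \(Q_\mu'\geq\kappa\) over the segment of \(I\) joining the two roots, use \(Q_\nu(b_\nu)=0\) to rewrite \(Q_\mu(b_\nu)\) as \(Q_\mu(b_\nu)-Q_\nu(b_\nu)\), and bound \(\|Q_\mu-Q_\nu\|_{L^\infty}\) by \(L_q\Wass(\mu,\nu)\) as at the end of \cref{prop:C1-stability}. The only difference is cosmetic: the paper orders the roots as \(b_\mu\geq b_\nu\) instead of \(b_\mu\leq b_\nu\).
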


\begin{proof}
Assume without loss of generality that \(b_\mu\geq b_\nu\).
Both points lie in \(I\), so \eqref{eq:uniform-Q-slope} and the fundamental theorem of calculus give
\begin{align*}
\kappa(b_\mu-b_\nu)
&\leq Q_\mu(b_\mu)-Q_\mu(b_\nu)\\
&=Q_\nu(b_\nu)-Q_\mu(b_\nu)\\
&\leq\|Q_\mu-Q_\nu\|_{L^\infty(0,1)}.
\end{align*}
The last quantity is at most \(L_q\Wass(\mu,\nu)\) by the final estimate in the proof of \cref{prop:C1-stability}.
\end{proof}

The theorem identifies the exponent \(\beta=1\) under switching transversality.
It is stronger than the square-root estimate one could infer by combining uniform solution stability with quadratic growth of \(u_\mu\).
That weaker route treats the graph of \(u_\mu\) but ignores the scalar balance equation \(Q_\mu(b_\mu)=0\).

\subsection{Algebraically degenerate crossings}

Transversality is the first member of a hierarchy of quantitative crossing conditions.
The following formulation uses only a lower growth bound for the switching function and therefore also covers nonsmooth crossings.

\begin{assumption}[Uniform algebraic crossing]
\label{ass:algebraic-crossing}
There are an integer \(m_{\mathrm c}\geq1\), a compact interval \(I\Subset(0,1)\), and a constant \(\kappa_{m_{\mathrm c}}>0\) with the following properties.
For every \(\mu\in\mathfrak M\), the switching function \(Q_\mu\) satisfies the hypotheses of \cref{prop:sign-representation}, its nontrivial zero \(b_\mu\) belongs to \(I\), and
\begin{equation}
(x-b_\mu)Q_\mu(x)
\geq
\kappa_{m_{\mathrm c}}|x-b_\mu|^{m_{\mathrm c}+1}
\qquad(x\in I).
\label{eq:algebraic-crossing}
\end{equation}
\end{assumption}

Equivalently, away from the zero,
\[
\operatorname{sgn}(x-b_\mu)Q_\mu(x)
\geq \kappa_{m_{\mathrm c}}|x-b_\mu|^{m_{\mathrm c}}.
\]
When this condition is obtained from a smooth Taylor expansion and the first nonzero derivative has order \(m_{\mathrm c}\), a sign-changing crossing forces \(m_{\mathrm c}\) to be odd.

\begin{theorem}[H\"older stability under an algebraic crossing]
\label{thm:algebraic-threshold-stability}
Suppose \cref{ass:algebraic-crossing} holds and there is \(L_Q\geq0\) such that
\begin{equation}
\|Q_\mu-Q_\nu\|_{L^\infty(I)}
\leq L_Q\Wass(\mu,\nu)
\qquad(\mu,\nu\in\mathfrak M).
\label{eq:switching-W1}
\end{equation}
Then
\begin{equation}
|b_\mu-b_\nu|
\leq
\left(\frac{L_Q}{\kappa_{m_{\mathrm c}}}\right)^{1/m_{\mathrm c}}
\Wass(\mu,\nu)^{1/m_{\mathrm c}}.
\label{eq:algebraic-threshold-W1}
\end{equation}
If \eqref{eq:q-W1} holds, then \eqref{eq:switching-W1} is valid with \(L_Q=L_q\).
\end{theorem}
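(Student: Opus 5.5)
The plan follows the proof of \cref{thm:threshold-stability}, with the linear slope bound replaced by the algebraic lower bound \eqref{eq:algebraic-crossing}. Fix \(\mu,\nu\in\mathfrak M\) and evaluate the switching function \(Q_\mu\) at the other threshold \(b_\nu\). Both \(b_\mu\) and \(b_\nu\) lie in \(I\) by \cref{ass:algebraic-crossing}, so \(x=b_\nu\) is admissible in \eqref{eq:algebraic-crossing} for \(\mu\). Assume \(b_\nu\neq b_\mu\), since otherwise there is nothing to prove, and divide by \(|b_\nu-b_\mu|\). This gives
\[
\kappa_{m_{\mathrm c}}|b_\nu-b_\mu|^{m_{\mathrm c}}
\leq \operatorname{sgn}(b_\nu-b_\mu)Q_\mu(b_\nu)
\leq |Q_\mu(b_\nu)|.
\]

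Next we use that \(b_\nu\) is the nontrivial zero of \(Q_\nu\). \cref{ass:algebraic-crossing} includes the hypotheses of \cref{prop:sign-representation}, so \(Q_\nu(b_\nu)=0\). Hence
\[
|Q_\mu(b_\nu)|=|Q_\mu(b_\nu)-Q_\nu(b_\nu)|\leq\|Q_\mu-Q_\nu\|_{L^\infty(I)}\leq L_Q\Wass(\mu,\nu),
\]
where \(b_\nu\in I\) and we used \eqref{eq:switching-W1}. Combining the two displays yields \(|b_\mu-b_\nu|^{m_{\mathrm c}}\leq (L_Q/\kappa_{m_{\mathrm c}})\Wass(\mu,\nu)\). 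Taking \(m_{\mathrm c}\)-th roots gives \eqref{eq:algebraic-threshold-W1}. Unlike the transversal proof, no ordering of \(b_\mu,b_\nu\) and no fundamental-theorem-of-calculus step is needed. The sign-weighted form of \eqref{eq:algebraic-crossing} handles both sides at once, which is why nonsmooth crossings are covered.

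For the final claim, assume \eqref{eq:q-W1}. Then for every \(x\in[0,1]\),
\[
|Q_\mu(x)-Q_\nu(x)|\leq\int_x^1|q_\mu-q_\nu|\dd s\leq (1-x)L_q\Wass(\mu,\nu)\leq L_q\Wass(\mu,\nu).
\]
This is exactly the estimate at the end of the proof of \cref{prop:C1-stability}, and it does not use strict monotonicity. So \eqref{eq:switching-W1} holds with \(L_Q=L_q\).

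The only delicate point, and it is minor, is whether \(Q_\mu\) is defined as the cumulative forcing of some \(q_\mu\). It is: \cref{prop:sign-representation} sets \(q=-Q'\) and \(Q(1)=0\), so \(Q_\mu(x)=\int_x^1 q_\mu\dd s\) and the integral bound applies. Beyond that, the argument only needs both thresholds to lie in the common interval \(I\) on which the crossing bound holds uniformly in \(\mu\). This is built into the assumption, so no localization such as \(|b_\mu-b_\nu|\leq\rho\) is needed, unlike in \cref{prop:sqrt-boundary}.
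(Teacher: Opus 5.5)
Your proof is correct and is essentially the paper's argument: evaluate the crossing bound for \(Q_\mu\) at \(x=b_\nu\), use \(Q_\nu(b_\nu)=0\) together with \eqref{eq:switching-W1}, and obtain the last assertion from \(|Q_\mu(x)-Q_\nu(x)|\leq\int_x^1|q_\mu-q_\nu|\dd s\). The only difference is cosmetic: the paper treats the case \(b_\mu\leq b_\nu\) and then interchanges \(\mu\) and \(\nu\), whereas your division by \(|b_\nu-b_\mu|\) covers both orderings at once.
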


\begin{proof}
Assume first that \(b_\mu\leq b_\nu\).
Applying \eqref{eq:algebraic-crossing} to \(Q_\mu\) at \(x=b_\nu\) and using \(Q_\nu(b_\nu)=0\) gives
\[
\kappa_{m_{\mathrm c}}(b_\nu-b_\mu)^{m_{\mathrm c}}
\leq Q_\mu(b_\nu)
=Q_\mu(b_\nu)-Q_\nu(b_\nu)
\leq\|Q_\mu-Q_\nu\|_{L^\infty(I)}.
\]
Interchanging \(\mu\) and \(\nu\) proves the same estimate when \(b_\nu<b_\mu\), and \eqref{eq:switching-W1} yields \eqref{eq:algebraic-threshold-W1}.
Finally, \eqref{eq:q-W1} implies
\[
|Q_\mu(x)-Q_\nu(x)|
\leq\int_x^1|q_\mu(s)-q_\nu(s)|\dd s
\leq L_q\Wass(\mu,\nu),
\]
which proves the last assertion.
\end{proof}

For \(m_{\mathrm c}=1\), \cref{thm:algebraic-threshold-stability} recovers the Lipschitz exponent in \cref{thm:threshold-stability}.
For higher crossing orders it gives the precise loss of regularity caused by degeneracy.

\subsection{Failure of Lipschitz control at a degenerate crossing}

The next family has uniformly small forcing perturbations but a much larger displacement of the free boundary.

\begin{example}[A cubic, degenerate crossing]
\label{ex:degenerate}
Fix \(b_0\in(0,1)\), choose \(a_*>0\) so that \(b_0+a_*^{1/3}<1\), and let \(a\in[0,a_*]\).
Define
\begin{equation}
Q_a(x):=(1-x)\bigl((x-b_0)^3-a\bigr),
\qquad
q_a(x):=-Q_a'(x).
\label{eq:degenerate-Q}
\end{equation}
The nontrivial zero is
\begin{equation}
b_a=b_0+a^{1/3}.
\label{eq:degenerate-threshold}
\end{equation}
Moreover, \(Q_a<0\) on \([0,b_a)\), \(Q_a>0\) on \((b_a,1)\), and
\[
Q_a'(x)
=a-(x-b_0)^3+3(1-x)(x-b_0)^2\geq0
\quad(0\leq x\leq b_a).
\]
Thus \cref{prop:sign-representation} applies.
At \(a=0\), however,
\[
Q_0'(b_0)=0.
\]
The perturbation of the forcing is exactly
\begin{equation}
\|q_a-q_0\|_{L^\infty(0,1)}=a,
\label{eq:forcing-a}
\end{equation}
whereas \(|b_a-b_0|=a^{1/3}\).
Consequently there is no constant \(C\) such that
\[
|b_a-b_0|\leq C\|q_a-q_0\|_{L^\infty}
\]
for all sufficiently small \(a\).

The example in fact satisfies the uniform cubic condition in \cref{ass:algebraic-crossing}.
To see this, choose a compact interval \(I\Subset(0,1)\) containing \(b_a\) for every \(a\in[0,a_*]\), and set \(\varepsilon_I:=1-\sup I>0\).
Writing \(y=x-b_0\) and \(d=a^{1/3}\), one has
\[
|Q_a(x)|
=(1-x)|y-d|(y^2+yd+d^2).
\]
The identity
\[
4(y^2+yd+d^2)-(y-d)^2=3(y+d)^2
\]
therefore gives
\begin{equation}
\operatorname{sgn}(x-b_a)Q_a(x)
\geq\frac{\varepsilon_I}{4}|x-b_a|^3
\qquad(x\in I).
\label{eq:cubic-uniform-crossing}
\end{equation}

To embed the same example in the measure-parameterized framework, take \(\Theta=[0,a_*]\), set
\[
\bar a_\mu:=\int_\Theta\theta\,\mu(\dd\theta),
\qquad
Q_\mu(x):=(1-x)\bigl((x-b_0)^3-\bar a_\mu\bigr),
\qquad
q_\mu:=-Q_\mu',
\]
and note that \(q_\mu=q_0-\bar a_\mu\).
Thus \(\|q_\mu-q_\nu\|_\infty=|\bar a_\mu-\bar a_\nu|\leq\Wass(\mu,\nu)\).
Now let \(\mu_a=\delta_a\) and \(\mu_0=\delta_0\); these laws recover \(Q_a\) and \(Q_0\).
Then \(\Wass(\mu_a,\mu_0)=a\), so
\[
|b_{\mu_a}-b_{\mu_0}|=\Wass(\mu_a,\mu_0)^{1/3}.
\]
Consequently the exponent \(1/3\) in
\cref{thm:algebraic-threshold-stability} is sharp on this family:
no estimate with a uniform constant and exponent \(\beta>1/3\) can hold near \(\mu_0\).
The policies still satisfy an \(O(a)\) \(C^1\) estimate.
Indeed, \cref{prop:sign-representation} gives \(u_a'=\gamma^{-1}[Q_a]_+\), while
\(\|Q_a-Q_0\|_\infty=\|a(1-\cdot)\|_\infty=a\); the positive-part contraction and integration therefore give
\[
\|u_a'-u_0'\|_\infty+\|u_a-u_0\|_\infty\leq\frac{2a}{\gamma}.
\]
It is only the zero-set location that loses the Lipschitz rate.
\end{example}

\begin{remark}
The exponent \(1/3\) in \cref{ex:degenerate} is not presented as a universal rate.
Replacing the cubic by an odd power \(m\geq3\) produces \(a^{1/m}\).
The example shows that some quantitative crossing information is indispensable; the rate is determined by the order of degeneracy.
\end{remark}

\subsection{A type-integral forcing}

The Wasserstein hypothesis \eqref{eq:q-W1} is automatic for the most common measure dependence.

\begin{proposition}[Measure-to-forcing estimate]
\label{prop:measure-forcing}
Let \(\varrho:[0,1]\times\Theta\to\R\) satisfy
\[
|\varrho(x,\theta)-\varrho(x,\eta)|\leq L_q d_\Theta(\theta,\eta)
\]
uniformly in \(x\), and set
\[
q_\mu(x)=\int_\Theta \varrho(x,\theta)\,\mu(\dd\theta).
\]
Then \eqref{eq:q-W1} holds.
\end{proposition}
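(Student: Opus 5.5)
The estimate follows from the Kantorovich--Rubinstein-type bound \eqref{eq:KR}, applied for each fixed \(x\in[0,1]\). Fix \(\mu,\nu\in\mathfrak M\) and \(x\in[0,1]\). By hypothesis, \(h_x:=\varrho(x,\cdot)\) is Lipschitz on \(\Theta\) with \(\operatorname{Lip}(h_x)\leq L_q\), and this constant does not depend on \(x\). The plan is:
\begin{itemize}
\item confirm that \(q_\mu(x)\) and \(q_\nu(x)\) are well defined;
\item write their difference as \(\int h_x\,\dd\mu-\int h_x\,\dd\nu\);
\item invoke \eqref{eq:KR}, or equivalently repeat the coupling computation from the proof of \cref{prop:primitive-to-reduced};
\item take the supremum over \(x\).
\end{itemize}

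Well-definedness is the only point that needs care. A Lipschitz function grows at most linearly: for a base point \(\theta_0\),
\[
|h_x(\theta)|\leq|h_x(\theta_0)|+L_q d_\Theta(\theta,\theta_0).
\]
Since \(\mu\in\Pp_1(\Theta)\), \(h_x\) is therefore \(\mu\)-integrable, and likewise \(\nu\)-integrable. The only thing to check is that \(h_x(\theta_0)\) is finite, which holds because \(\varrho\) is real-valued. Measurability of \(h_x\) follows from its continuity.

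For the main estimate, take any coupling \(\pi\in\Pi(\mu,\nu)\). Then
\[
|q_\mu(x)-q_\nu(x)|=\Bigl|\int_{\Theta\times\Theta}[\varrho(x,\theta)-\varrho(x,\eta)]\,\pi(\dd\theta,\dd\eta)\Bigr|\leq L_q\int d_\Theta(\theta,\eta)\,\pi(\dd\theta,\dd\eta).
\]
Taking the infimum over \(\pi\) gives \(|q_\mu(x)-q_\nu(x)|\leq L_q\Wass(\mu,\nu)\). The bound holds for every \(x\), so taking the supremum over \(x\in[0,1]\) gives \eqref{eq:q-W1} with the same constant \(L_q\). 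Because the estimate is pointwise in \(x\), this is a bound on the supremum norm, which dominates the essential supremum in \(L^\infty\).

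There is no real obstacle in this argument. The only subtlety is that integrability has to be verified before \eqref{eq:KR} can be applied, and that is handled by the finite first moment as above.
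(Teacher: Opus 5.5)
Your proposal is correct and follows the paper's proof: apply the coupling bound \eqref{eq:KR} to \(\varrho(x,\cdot)\) for each fixed \(x\), then take the supremum over \(x\). You also check that \(\varrho(x,\cdot)\) is \(\mu\)- and \(\nu\)-integrable, using linear growth and finite first moments; the paper leaves that step implicit.
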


\begin{proof}
Apply the coupling argument in \eqref{eq:KR} pointwise in \(x\), then take the supremum over \(x\).
\end{proof}

\subsection{Order comparative statics}

The cumulative formula also gives a comparison theorem that does not require differentiability with respect to a parameter.

\begin{theorem}[Monotone forcing comparative statics]
\label{thm:order-comparative-statics}
Let \(q_1,q_2\) satisfy \cref{ass:single-crossing}, and let \(u_1,u_2\) and \(b_1,b_2\) be their policies and thresholds.
If
\[
q_1(x)\leq q_2(x)
\quad\text{for every }x\in[0,1],
\]
then
\begin{equation}
u_1(x)\leq u_2(x)
\quad(0\leq x\leq1),
\qquad
b_1\geq b_2.
\label{eq:order-comparative}
\end{equation}
If \(q_1<q_2\) on a set of positive measure in \((b_2,1)\), then \(b_1>b_2\).
\end{theorem}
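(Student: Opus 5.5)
The plan is to work entirely with the explicit formula of \cref{thm:exact-one-dimensional}, $u_i(x)=\gamma^{-1}\int_0^x[Q_i(t)]_+\dd t$, so that everything reduces to comparing the cumulative forcings $Q_i(x)=\int_x^1q_i(s)\dd s$.

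First I show $Q_1\le Q_2$ pointwise. This is immediate from $q_1\le q_2$ by integrating over $[x,1]$. Because $r\mapsto[r]_+$ is nondecreasing, $[Q_1]_+\le[Q_2]_+$ on $[0,1]$. Integrating from $0$ to $x$ then gives $u_1\le u_2$, which is the first part of \eqref{eq:order-comparative}.

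For the thresholds I use the sign description \eqref{eq:Q-sign} from \cref{lem:switching-point}. Suppose for contradiction that $b_1<b_2$. Choose any $x\in(b_1,b_2)$. Then $Q_1(x)>0$, since $x$ lies to the right of $b_1$ and $x<1$. At the same point $Q_2(x)<0$, since $x<b_2$. This contradicts $Q_1\le Q_2$, so $b_1\ge b_2$. An equivalent route goes through the contact sets: $u_1\le u_2$ and $u_1\ge0$ give $\{u_2=0\}\subset\{u_1=0\}$, that is, $[0,b_2]\subset[0,b_1]$ by \eqref{eq:contact-free-boundary}.

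For the strict statement, assume $q_1<q_2$ on a set $E\subset(b_2,1)$ of positive measure. At $x=b_2$,
\[
Q_2(b_2)-Q_1(b_2)=\int_{b_2}^1(q_2-q_1)\dd s\ \ge\ \int_E(q_2-q_1)\dd s>0,
\]
where the first inequality holds because $q_2-q_1\ge0$ everywhere. The last integral is strictly positive because the integrand is strictly positive on a set of positive measure. Since $Q_2(b_2)=0$, this gives $Q_1(b_2)<0$. By \eqref{eq:Q-sign}, $Q_1>0$ on $(b_1,1)$, and $Q_1(b_1)=0$, so $Q_1$ is nonnegative on $[b_1,1)$. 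Hence $b_2\notin[b_1,1)$. As $b_2\in(0,1)$, this forces $b_2<b_1$.

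The only delicate point is this strict inequality. One must evaluate at $b_2$, not at $b_1$, and then invoke the strict sign pattern of $Q_1$ to the right of its own zero. Measurability of $E$ and the continuity of $q_i$ cause no difficulty.
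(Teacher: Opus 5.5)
Your proposal is correct and follows the paper's proof. You compare $Q_1\le Q_2$, use monotonicity of the positive part in the exact formula for $u_i$, and evaluate the cumulative inequality at $b_2$ against the sign pattern of $Q_1$; your contradiction argument via a point of $(b_1,b_2)$ for the weak inequality $b_1\ge b_2$ is only a cosmetic variant of the paper's direct evaluation at $b_2$.
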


\begin{proof}
Let \(Q_i(x)=\int_x^1q_i(s)\dd s\).
The forcing order implies \(Q_1\leq Q_2\), hence \([Q_1]_+\leq[Q_2]_+\).
Integrating the exact derivative formula gives \(u_1\leq u_2\).

At \(b_2\), one has \(Q_2(b_2)=0\), so \(Q_1(b_2)\leq0\).
The sign description for \(Q_1\) says that it is negative to the left of \(b_1\) and positive to the right.
Therefore \(b_2\leq b_1\).
The final assertion follows by making the cumulative inequality at \(b_2\) strict.
\end{proof}

The direction in \eqref{eq:order-comparative} is natural for the minimization convention in \eqref{eq:linear-energy}: a larger marginal forcing makes a positive policy more attractive and weakly shrinks the contact interval.

\subsection{Differentiable motion along a parametric path}

For a smooth one-parameter family, the threshold has an explicit first variation.
This result complements the metric estimate in \cref{thm:threshold-stability}.

\begin{theorem}[Velocity of the one-dimensional free boundary]
\label{thm:threshold-velocity}
Let \(t\mapsto q_t\in C([0,1])\) be continuously differentiable in the uniform norm for \(t\) in an open interval.
Assume each \(q_t\) satisfies \cref{ass:single-crossing}, and suppose
\[
q_t(b_t)\neq0.
\]
Then \(t\mapsto b_t\) is continuously differentiable and
\begin{equation}
b_t'
=
\frac{\displaystyle\int_{b_t}^1\partial_tq_t(s)\dd s}
{q_t(b_t)}.
\label{eq:threshold-velocity}
\end{equation}
For a fixed \(x>b_t\) that remains in the positivity region for nearby parameters,
\begin{equation}
\partial_tu_t(x)
=\frac1\gamma\int_{b_t}^x
\left(\int_s^1\partial_tq_t(r)\dd r\right)\dd s.
\label{eq:policy-velocity}
\end{equation}
For a fixed \(x<b_t\) that stays in the contact region, \(\partial_tu_t(x)=0\).
\end{theorem}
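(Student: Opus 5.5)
The plan is to apply the implicit function theorem to the scalar map
\[
H(t,x):=Q_t(x)=\int_x^1 q_t(s)\dd s,
\]
whose zero in \((0,c_t)\) is \(b_t\) by \cref{lem:switching-point}.

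\textbf{Step 1: regularity of \(H\).} Since \(t\mapsto q_t\) is \(C^1\) into \(C([0,1])\) with the uniform norm, the linear map \(q\mapsto\int_x^1 q\) is bounded from \(C([0,1])\) to \(\R\), uniformly in \(x\). Hence \(\partial_tH(t,x)=\int_x^1\partial_tq_t(s)\dd s\), and this derivative is jointly continuous in \((t,x)\). Also \(\partial_xH(t,x)=-q_t(x)\). This is jointly continuous because \(q_t\) is continuous in \(x\) and \(t\mapsto q_t\) is continuous in sup norm. So \(H\in C^1\) jointly.

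\textbf{Step 2: apply the implicit function theorem.} At \((t_0,b_{t_0})\) we have \(H=0\) and \(\partial_xH=-q_{t_0}(b_{t_0})\neq0\); indeed it is \(>0\), because \(b_{t_0}<c_{t_0}\) and \(q_{t_0}<0\) there. The implicit function theorem gives a \(C^1\) local curve \(\tilde b_t\) of zeros of \(H\), with
\[
\tilde b_t'=-\partial_tH/\partial_xH=\int_{\tilde b_t}^1\partial_tq_t\Big/q_t(\tilde b_t).
\]
We must then identify \(\tilde b_t\) with \(b_t\). \(H(t,\cdot)\) has exactly two zeros in \([0,1]\), namely \(b_t\) and \(1\), and the implicit function theorem gives local uniqueness of zeros near \(b_{t_0}\). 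So it suffices to know that \(b_t\to b_{t_0}\). For this, use continuity of zeros: for small \(\varepsilon\), \(Q_{t_0}\) is strictly negative at \(b_{t_0}-\varepsilon\) and strictly positive at \(b_{t_0}+\varepsilon\). For \(t\) near \(t_0\), \(\|Q_t-Q_{t_0}\|_\infty\le\|q_t-q_{t_0}\|_\infty\) is small, so these signs persist. The sign structure \eqref{eq:Q-sign} for \(Q_t\) then forces \(b_t\in(b_{t_0}-\varepsilon,b_{t_0}+\varepsilon)\). I expect this identification step to be the only delicate point, and it is handled by this sign-persistence argument. It yields \eqref{eq:threshold-velocity} everywhere on the interval, so \(b_t\) is \(C^1\).

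\textbf{Step 3: policy velocity.} Fix \(x>b_t\) with \(x>b_s\) for \(s\) near \(t\). By \eqref{eq:exact-policy},
\[
u_s(x)=\gamma^{-1}\int_{b_s}^x Q_s(r)\dd r.
\]
Differentiate in \(s\) by the Leibniz rule, using the \(C^1\) dependence of \(b_s\) and of \(Q_s\) in sup norm. The boundary term \(-\gamma^{-1}Q_s(b_s)b_s'\) vanishes because \(Q_s(b_s)=0\). What remains is \(\gamma^{-1}\int_{b_t}^x\partial_tQ_t(r)\dd r\), which is \eqref{eq:policy-velocity}. For fixed \(x<b_t\), continuity of \(b_s\) gives \(x<b_s\) for nearby \(s\). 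Then \(u_s(x)=0\) for those \(s\), so \(\partial_tu_t(x)=0\).
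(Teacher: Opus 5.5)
Your proposal is correct and takes the same route as the paper. You apply the implicit function theorem to \(\mathcal Q(t,x)=\int_x^1q_t(s)\dd s\), then differentiate \(u_t(x)=\gamma^{-1}\int_{b_t}^x\mathcal Q(t,s)\dd s\), where the moving-boundary term vanishes because \(\mathcal Q(t,b_t)=0\). Your joint-\(C^1\) check of \(\mathcal Q\) and your sign-persistence argument identifying the implicit-function branch with \(b_t\) fill in details the paper leaves implicit.
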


\begin{proof}
Set
\[
\mathcal Q(t,x):=\int_x^1q_t(s)\dd s.
\]
Then \(\mathcal Q(t,b_t)=0\) and
\[
\partial_x\mathcal Q(t,b_t)=-q_t(b_t)\neq0.
\]
The implicit-function theorem gives differentiability of \(b_t\).
Differentiating the zero identity yields
\[
0=\int_{b_t}^1\partial_tq_t(s)\dd s-q_t(b_t)b_t',
\]
which proves \eqref{eq:threshold-velocity}.

For \(x>b_t\), differentiate
\[
u_t(x)=\gamma^{-1}\int_{b_t}^x\mathcal Q(t,s)\dd s.
\]
The moving-boundary term is \(-\gamma^{-1}\mathcal Q(t,b_t)b_t'=0\), and differentiation under the integral gives \eqref{eq:policy-velocity}.
The formula in the interior of the contact region is immediate because the solution vanishes there for nearby \(t\).
\end{proof}

\begin{remark}[Affine path]
For \(q_t(x)=\alpha x-h(t)\), formula \eqref{eq:threshold-velocity} reduces to
\[
b_t'=\frac{2h'(t)}\alpha,
\]
in agreement with the explicit threshold \(b_t=2h(t)/\alpha-1\).
\end{remark}

\section{Empirical measures and finite-sample thresholds}
\label{sec:empirical}

Let \(\Theta=[\theta_-,\theta_+]\subset\R\), write \(D=\theta_+-\theta_-\), and let \(\theta_1,\ldots,\theta_N\) be independent and identically distributed (i.i.d.) with law \(\mu\).
The empirical law is
\begin{equation}
\mu_N:=\frac1N\sum_{i=1}^N\delta_{\theta_i}.
\label{eq:empirical-law}
\end{equation}
We assume that \(\mathfrak M\) contains \(\mu\) and the empirical laws under consideration and that the constants in the preceding sections are uniform on this class.

\begin{theorem}[Deterministic empirical implication]
\label{thm:empirical-deterministic}
Under the hypotheses of \cref{thm:H1-stability},
\begin{equation}
\|u_{\mu_N}-u_\mu\|_{H^1(\Omega)}
\leq C_u\Wass(\mu_N,\mu),
\qquad
C_u:=\frac{C_P\sqrt{1+C_P^2}\,\|\ell\|_{L^2}}{\gamma}.
\label{eq:empirical-policy}
\end{equation}
Under the hypotheses of \cref{thm:threshold-stability},
\begin{equation}
|b_{\mu_N}-b_\mu|
\leq C_b\Wass(\mu_N,\mu),
\qquad
C_b:=\frac{L_q}{\kappa}.
\label{eq:empirical-threshold}
\end{equation}
Consequently, \(u_{\mu_N}\to u_\mu\) in \(H^1\) and \(b_{\mu_N}\to b_\mu\) almost surely.
\end{theorem}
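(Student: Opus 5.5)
The two deterministic inequalities are direct specializations of earlier results, and the almost-sure statement then reduces to convergence of the empirical law in \(\Wass\).

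\textbf{Policy bound.} I apply \cref{thm:H1-stability}, specifically \eqref{eq:H1-stability}, with \(\nu=\mu_N\). This is legitimate because \(\mathfrak M\) is assumed to contain \(\mu\) and every empirical law under consideration, and the constants in \cref{ass:reduced} are uniform on \(\mathfrak M\). For each fixed realization of \(\theta_1,\ldots,\theta_N\), the law \(\mu_N\) is a deterministic element of \(\mathfrak M\). The estimate \eqref{eq:H1-stability} then reads exactly as \eqref{eq:empirical-policy}, with the stated constant \(C_u\).

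\textbf{Threshold bound.} I apply \cref{thm:threshold-stability} to the same pair of laws. \Cref{ass:single-crossing,ass:transversality} and \eqref{eq:q-W1} hold uniformly on \(\mathfrak M\), so \eqref{eq:threshold-W1} gives \eqref{eq:empirical-threshold} with \(C_b=L_q/\kappa\).

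\textbf{Almost-sure convergence.} Given the two bounds, it suffices to show that \(\Wass(\mu_N,\mu)\to0\) almost surely. Here \(\Theta=[\theta_-,\theta_+]\) is compact, so I plan to use the one-dimensional identity
\[
\Wass(\mu_N,\mu)=\int_{\theta_-}^{\theta_+}|F_N(t)-F(t)|\dd t\leq D\sup_t|F_N(t)-F(t)|,
\]
where \(F_N\) and \(F\) are the distribution functions of \(\mu_N\) and \(\mu\). The Glivenko--Cantelli theorem gives \(\sup_t|F_N-F|\to0\) almost surely, and hence \(\Wass(\mu_N,\mu)\to0\) almost surely. If one prefers not to invoke the CDF identity, an alternative route is available: Varadarajan's theorem gives almost-sure weak convergence \(\mu_N\Rightarrow\mu\), and on a compact space weak convergence is equivalent to \(\Wass\)-convergence because first moments are uniformly bounded by \(D\). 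Composing either route with the two deterministic bounds yields \(u_{\mu_N}\to u_\mu\) in \(H^1\) and \(b_{\mu_N}\to b_\mu\) almost surely.

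The only point needing care is measurability, so that "almost surely" is meaningful. The map \((\theta_1,\ldots,\theta_N)\mapsto\Wass(\mu_N,\mu)\) is continuous, hence measurable. The events \(\{u_{\mu_N}\to u_\mu\}\) and \(\{b_{\mu_N}\to b_\mu\}\) each contain the full-probability event \(\{\Wass(\mu_N,\mu)\to0\}\). So even without separately checking measurability of \(u_{\mu_N}\) and \(b_{\mu_N}\), the conclusion holds in the sense of outer probability, and it holds outright on that measurable full-measure set.
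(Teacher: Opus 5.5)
Your proposal is correct and follows the paper's proof: both deterministic bounds are direct applications of \cref{thm:H1-stability,thm:threshold-stability}, and the almost-sure claims reduce to \(\Wass(\mu_N,\mu)\to0\) almost surely on the compact interval. The paper obtains this limit through almost-sure weak convergence plus compactness, which is your alternative route; your primary route, which combines the identity \eqref{eq:W1-CDF} with the Glivenko--Cantelli theorem, is an equally valid and slightly more direct variant.
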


\begin{proof}
The deterministic inequalities are direct applications of \cref{thm:H1-stability,thm:threshold-stability}.
On a compact interval, the empirical measures converge weakly to \(\mu\) almost surely by the Glivenko--Cantelli theorem.
Compact support makes first moments uniformly integrable, hence weak convergence is equivalent to convergence in \(W_1\).
The almost-sure conclusions follow.
\end{proof}

We next make the rate explicit without imposing a density or a lower bound on it.
Let \(F\) and \(F_N\) denote the distribution functions of \(\mu\) and \(\mu_N\).
In one dimension,
\begin{equation}
\Wass(\mu_N,\mu)
=\int_{\theta_-}^{\theta_+}|F_N(t)-F(t)|\dd t
\leq D\|F_N-F\|_{L^\infty(\R)}.
\label{eq:W1-CDF}
\end{equation}

\begin{theorem}[Distribution-free finite-sample bounds]
\label{thm:empirical-rate}
For every probability law \(\mu\) on \([\theta_-,\theta_+]\),
\begin{equation}
\mathbb E\Wass(\mu_N,\mu)
\leq\frac{D}{2\sqrt N}.
\label{eq:expected-W1}
\end{equation}
For every \(\delta\in(0,1)\), with probability at least \(1-\delta\),
\begin{equation}
\Wass(\mu_N,\mu)
\leq
D\sqrt{\frac{\log(2/\delta)}{2N}}.
\label{eq:high-prob-W1}
\end{equation}
Therefore, under \cref{thm:empirical-deterministic},
\begin{align}
\mathbb E\|u_{\mu_N}-u_\mu\|_{H^1}
&\leq \frac{C_uD}{2\sqrt N},
\label{eq:expected-policy}\\
\mathbb E|b_{\mu_N}-b_\mu|
&\leq \frac{C_bD}{2\sqrt N},
\label{eq:expected-threshold}
\end{align}
and the corresponding high-probability bounds are obtained by multiplying the right side of \eqref{eq:high-prob-W1} by \(C_u\) or \(C_b\).
\end{theorem}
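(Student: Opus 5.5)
The plan is to start from the one-dimensional identity \eqref{eq:W1-CDF}, which writes \(\Wass(\mu_N,\mu)=\int_{\theta_-}^{\theta_+}|F_N(t)-F(t)|\dd t\), and to handle the expectation and the tail separately. For the expectation, apply Tonelli to move \(\mathbb E\) inside the integral. For each fixed \(t\), \(NF_N(t)\) is binomial with parameters \(N\) and \(F(t)\). Cauchy--Schwarz then gives
\[
\mathbb E|F_N(t)-F(t)|\leq\sqrt{\operatorname{Var}F_N(t)}=\sqrt{F(t)(1-F(t))/N}\leq\frac{1}{2\sqrt N}.
\]
Integrating this over an interval of length \(D\) yields \eqref{eq:expected-W1}. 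No density or regularity of \(\mu\) is used, which is the sense in which the bound is distribution-free.

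For the high-probability bound, use the inequality in \eqref{eq:W1-CDF}, \(\Wass(\mu_N,\mu)\leq D\|F_N-F\|_{L^\infty(\R)}\). Then apply the Dvoretzky--Kiefer--Wolfowitz inequality with Massart's constant \citep{Massart1990}:
\[
\mathbb P\bigl(\|F_N-F\|_\infty>\varepsilon\bigr)\leq2e^{-2N\varepsilon^2}
\qquad\text{for every }\varepsilon>0.
\]
Setting \(2e^{-2N\varepsilon^2}=\delta\), i.e. \(\varepsilon=\sqrt{\log(2/\delta)/(2N)}\), gives \eqref{eq:high-prob-W1}. Massart's theorem holds for arbitrary distribution functions, including ones with atoms, so no continuity hypothesis on \(F\) is needed.

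The transfer step comes next. \Cref{thm:empirical-deterministic} gives the pathwise inequalities \(\|u_{\mu_N}-u_\mu\|_{H^1}\leq C_u\Wass(\mu_N,\mu)\) and \(|b_{\mu_N}-b_\mu|\leq C_b\Wass(\mu_N,\mu)\). These hold for every realization, because the empirical laws are assumed to lie in \(\mathfrak M\) with uniform constants. Taking expectations gives \eqref{eq:expected-policy} and \eqref{eq:expected-threshold}. Intersecting with the DKW event gives the high-probability versions with the same \(\delta\).

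The only delicate point is measurability of \(\omega\mapsto\|u_{\mu_N}-u_\mu\|_{H^1}\) and \(\omega\mapsto|b_{\mu_N}-b_\mu|\), which is needed for the expectations to make sense. I would get it from Lipschitz continuity. The maps \(\mu\mapsto u_\mu\) and \(\mu\mapsto b_\mu\) are Lipschitz from \((\mathfrak M,\Wass)\) by \cref{thm:H1-stability,thm:threshold-stability}. The map \((\theta_1,\ldots,\theta_N)\mapsto\mu_N\) is continuous into \(W_1\), since \(\Wass(\mu_N,\mu_N')\leq N^{-1}\sum_i|\theta_i-\theta_i'|\). So both quantities are continuous functions of the sample, hence measurable. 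Alternatively, one can simply read the expectations as outer expectations.
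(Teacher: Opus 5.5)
Your proposal is correct and follows the paper's proof essentially step for step. The expectation bound uses the binomial variance bound (Cauchy--Schwarz in your version, Jensen in the paper's) together with Tonelli, the tail bound comes from the DKW--Massart inequality combined with \(\Wass(\mu_N,\mu)\leq D\|F_N-F\|_\infty\), and both are then transferred pathwise through \cref{thm:empirical-deterministic}. Your remarks on measurability of the sample-to-policy and sample-to-threshold maps, and on the validity of Massart's bound for distribution functions with atoms, add care that the paper leaves implicit.
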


\begin{proof}
For fixed \(t\), \(NF_N(t)\) is binomial with success probability \(F(t)\).
Hence Jensen's inequality gives
\[
\mathbb E|F_N(t)-F(t)|
\leq\sqrt{\operatorname{Var}(F_N(t))}
=\sqrt{\frac{F(t)(1-F(t))}{N}}
\leq\frac1{2\sqrt N}.
\]
Integrating this estimate in the identity in \eqref{eq:W1-CDF} and using Tonelli proves \eqref{eq:expected-W1}.

The Dvoretzky--Kiefer--Wolfowitz--Massart inequality states
\[
\mathbb P\bigl(\|F_N-F\|_\infty>t\bigr)
\leq2e^{-2Nt^2}
\qquad(t>0);
\]
see \citet{Massart1990}.
Set \(t=\sqrt{\log(2/\delta)/(2N)}\) and use \eqref{eq:W1-CDF} to obtain \eqref{eq:high-prob-W1}.
The policy and threshold estimates then follow from \eqref{eq:empirical-policy} and \eqref{eq:empirical-threshold}.
\end{proof}

\begin{remark}[Dimension dependence]
The \(N^{-1/2}\) conclusion above is stated only for a compact one-dimensional type space.
In higher-dimensional type spaces, empirical Wasserstein rates depend on the dimension, the Wasserstein order, and available moments; see \citet{FournierGuillin2015}.
The next result records the dimension-dependent bound needed for the deterministic stability estimates.
\end{remark}

\subsection{Compact type spaces in arbitrary dimension}

For this subsection, let \(\Theta\subset\R^k\) be compact, where \(k\) is the dimension of the type space and is unrelated to the spatial dimension of \(\Omega\).
Write \(D=\operatorname{diam}(\Theta)>0\) and, for \(N\geq2\), set
\begin{equation}
\rho_k(N):=
\begin{cases}
N^{-1/2}, & k=1,\\
N^{-1/2}\log(1+N), & k=2,\\
N^{-1/k}, & k\geq3.
\end{cases}
\label{eq:dimension-rate}
\end{equation}

\begin{theorem}[Dimension-dependent empirical guarantees]
\label{thm:empirical-W1-dimension}
There is a constant \(A_k>0\), depending only on \(k\), such that
\begin{equation}
\mathbb E\Wass(\mu_N,\mu)
\leq A_kD\rho_k(N).
\label{eq:expected-W1-dimension}
\end{equation}
For \(\delta\in(0,1)\), define
\begin{equation}
R_{k,N,\delta}
:=A_kD\rho_k(N)
+D\sqrt{\frac{\log(1/\delta)}{2N}}.
\label{eq:dimension-high-prob-radius}
\end{equation}
Then, with probability at least \(1-\delta\),
\begin{equation}
\Wass(\mu_N,\mu)\leq R_{k,N,\delta}.
\label{eq:high-prob-W1-dimension}
\end{equation}
Under the hypotheses of \cref{thm:H1-stability}, these estimates imply
\begin{align}
\mathbb E\|u_{\mu_N}-u_\mu\|_{H^1(\Omega)}
&\leq C_uA_kD\rho_k(N),
\label{eq:expected-policy-dimension}\\
\|u_{\mu_N}-u_\mu\|_{H^1(\Omega)}
&\leq C_uR_{k,N,\delta}
\quad\text{with probability at least }1-\delta.
\label{eq:high-prob-policy-dimension}
\end{align}
If, in addition, \cref{ass:algebraic-crossing} and \eqref{eq:switching-W1}
hold for \(\mu\) and the empirical laws, then
\begin{align}
\mathbb E|b_{\mu_N}-b_\mu|
&\leq
\left(\frac{L_Q}{\kappa_{m_{\mathrm c}}}\right)^{1/m_{\mathrm c}}
\bigl(A_kD\rho_k(N)\bigr)^{1/m_{\mathrm c}},
\label{eq:expected-threshold-dimension}\\
|b_{\mu_N}-b_\mu|
&\leq
\left(\frac{L_Q}{\kappa_{m_{\mathrm c}}}\right)^{1/m_{\mathrm c}}
R_{k,N,\delta}^{1/m_{\mathrm c}}
\quad\text{with probability at least }1-\delta.
\label{eq:high-prob-threshold-dimension}
\end{align}
In particular, the transversal case \(m_{\mathrm c}=1\) inherits the empirical \(W_1\) rate, whereas an order-\(m_{\mathrm c}\) crossing inherits its \(1/m_{\mathrm c}\) power.
\end{theorem}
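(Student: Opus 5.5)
The plan has three parts: bound the expected empirical Wasserstein distance on a compact set in \(\R^k\), upgrade it to a high-probability bound by concentration, and then transfer both bounds through the deterministic estimates of \cref{thm:H1-stability,thm:algebraic-threshold-stability}.

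\textbf{Step 1: the expectation bound \eqref{eq:expected-W1-dimension}.} For \(k=1\) this is \cref{thm:empirical-rate}, with \(A_1=1/2\). For \(k\geq2\) I will use the standard dyadic multiscale argument.
\begin{itemize}
\item After translating, enclose \(\Theta\) in a cube of side \(D\). For \(j\geq0\), partition the cube into \(2^{jk}\) subcubes of side \(2^{-j}D\).
\item Build a coupling scale by scale. This gives
\[
\Wass(\mu_N,\mu)\leq \sqrt k\,D\Bigl(2^{-J}+\sum_{j=1}^{J}2^{-j}\sum_{C\in\mathcal D_j}|\mu_N(C)-\mu(C)|\Bigr).
\]
\item Take expectations. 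Each cell mass is a binomial proportion, so \(\mathbb E|\mu_N(C)-\mu(C)|\leq\sqrt{\mu(C)/N}\). Cauchy--Schwarz over the \(2^{jk}\) cells then gives \(\sum_C\sqrt{\mu(C)}\leq 2^{jk/2}\).
\item The \(j\)-th term is therefore at most \(2^{j(k/2-1)}N^{-1/2}\). Choose \(2^{J}\approx N^{1/k}\) and sum the geometric series:
  \begin{itemize}
  \item for \(k\geq3\) the series is dominated by its last term, giving \(N^{-1/k}\);
  \item for \(k=2\) every term is equal, so the sum is \(J\approx\log N\), giving \(N^{-1/2}\log(1+N)\).
  \end{itemize}
\item The constants depend only on \(k\), which defines \(A_k\).
\end{itemize}
Alternatively, one could cite \citet{FournierGuillin2015} or \citet{WeedBach2019}. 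I would still include the sketch, since keeping track of the uniform dependence on \(D\) is the only delicate bookkeeping.

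\textbf{Step 2: the high-probability bound \eqref{eq:high-prob-W1-dimension}.} Consider \(\Wass(\mu_N,\mu)\) as a function of the sample \((\theta_1,\dots,\theta_N)\). Replacing one sample point changes \(\mu_N\) by moving mass \(1/N\) a distance at most \(D\). By the triangle inequality, this changes \(\Wass(\mu_N,\mu)\) by at most \(D/N\). McDiarmid's bounded-difference inequality then gives
\[
\mathbb P\bigl(\Wass(\mu_N,\mu)-\mathbb E\Wass(\mu_N,\mu)>t\bigr)\leq \exp\bigl(-2Nt^2/D^2\bigr).
\]
Set \(t=D\sqrt{\log(1/\delta)/(2N)}\) and combine with Step 1 to obtain \(R_{k,N,\delta}\). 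Measurability of \(\Wass(\mu_N,\mu)\) in the sample follows from its continuity in the sample points.

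\textbf{Step 3: transfer to policies and thresholds.} \cref{thm:H1-stability}, in the form \eqref{eq:empirical-policy}, is a deterministic pathwise inequality. Taking expectations gives \eqref{eq:expected-policy-dimension}, and intersecting with the event of Step 2 gives \eqref{eq:high-prob-policy-dimension}.

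For the thresholds, \cref{thm:algebraic-threshold-stability} gives, pathwise,
\[
|b_{\mu_N}-b_\mu|\leq (L_Q/\kappa_{m_{\mathrm c}})^{1/m_{\mathrm c}}\,\Wass(\mu_N,\mu)^{1/m_{\mathrm c}}.
\]
The map \(s\mapsto s^{1/m_{\mathrm c}}\) is concave, so Jensen's inequality gives \(\mathbb E\,\Wass^{1/m_{\mathrm c}}\leq(\mathbb E\,\Wass)^{1/m_{\mathrm c}}\). Combined with Step 1, this yields \eqref{eq:expected-threshold-dimension}. Monotonicity of \(s\mapsto s^{1/m_{\mathrm c}}\) on the Step 2 event yields \eqref{eq:high-prob-threshold-dimension}.

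The main obstacle is Step 1: the multiscale coupling bound for \(k\geq2\), in particular getting the logarithm exactly at \(k=2\) with constants depending only on \(k\). Steps 2 and 3 are routine.
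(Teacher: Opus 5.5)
Your proposal is correct and follows essentially the same route as the paper. Both use a dyadic multiscale coupling bound with binomial and Cauchy--Schwarz control of the cell masses and \(2^J\approx N^{1/k}\) (that is, \(N^{1/2}\) at \(k=2\)), then McDiarmid's inequality with bounded differences \(D/N\), then pathwise transfer through \cref{thm:H1-stability,thm:algebraic-threshold-stability} with Jensen for the concave power. The only differences are cosmetic: you handle \(k=1\) through the distribution-function bound of \cref{thm:empirical-rate}, after enclosing \(\Theta\) in an interval of length \(D\), instead of the dyadic estimate, which is equally valid; and any factor-of-two slack in your explicit \(\sqrt k\,D\) prefactor is absorbed into \(A_k\).
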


\begin{proof}
After translation and rescaling, it is enough to work in a unit cube.
Let \(\mathcal P_j\) be its dyadic partition into \(2^{jk}\) cubes at level \(j\).
A standard multiscale coupling bound gives, for every integer \(J\geq1\) (see, for example, \citet{WeedBach2019}),
\begin{equation}
\Wass(\mu_N,\mu)
\leq C_kD\left[
2^{-J}
+\sum_{j=1}^J2^{-j}
\sum_{C\in\mathcal P_j}|\mu_N(C)-\mu(C)|
\right],
\label{eq:dyadic-W1-bound}
\end{equation}
where \(C_k\) depends only on the dimension.
For each cell, \(N\mu_N(C)\) is binomial, and Cauchy--Schwarz yields
\[
\mathbb E\sum_{C\in\mathcal P_j}|\mu_N(C)-\mu(C)|
\leq
\frac1{\sqrt N}\sum_{C\in\mathcal P_j}\sqrt{\mu(C)}
\leq\frac{2^{jk/2}}{\sqrt N}.
\]
Taking expectations in \eqref{eq:dyadic-W1-bound} and choosing \(2^J\) of order \(N^{1/2}\) for \(k\leq2\) and of order \(N^{1/k}\) for \(k\geq3\) proves \eqref{eq:expected-W1-dimension}, after changing the dimension-only constant.

If one observation is replaced, the two empirical laws can be coupled with transportation cost at most \(D/N\).
The triangle inequality therefore shows that the value of \(\Wass(\mu_N,\mu)\) changes by at most \(D/N\).
McDiarmid's inequality gives
\[
\mathbb P\left(
\Wass(\mu_N,\mu)-\mathbb E\Wass(\mu_N,\mu)>t
\right)
\leq\exp\left(-\frac{2Nt^2}{D^2}\right).
\]
Taking \(t=D\sqrt{\log(1/\delta)/(2N)}\) proves
\eqref{eq:high-prob-W1-dimension}.
The policy estimates follow from \eqref{eq:empirical-policy}.
The high-probability threshold estimate follows from
\eqref{eq:algebraic-threshold-W1}; its expectation version follows from the concavity of \(s\mapsto s^{1/m_{\mathrm c}}\) and Jensen's inequality.
\end{proof}

The rate in \cref{thm:empirical-W1-dimension} is a distribution-free bound in the ambient type dimension.
Measures supported on a lower-dimensional set, and models depending on a finite-dimensional sufficient statistic, can converge faster.

\begin{remark}[Uniform admissibility]
In a general model, an empirical law may leave a structural class defined only by a strict condition on the population mean.
The corporate-tax application avoids this issue: its pointwise range restriction \(h(\theta)\in[h_-,h_+]\Subset(\alpha/2,\alpha)\) forces every empirical mean to remain in the same interval.
Thus its empirical bounds are unconditional within the stated type class.
\end{remark}

\subsection{Asymptotic inference for a type-integral threshold}

The generic Wasserstein route controls every Lipschitz dependence on the law, but it can be conservative when the free boundary is the root of a smooth scalar estimating equation.
We now derive its asymptotic linearization directly.
Here \((\Theta,d_\Theta)\) may be any Polish type space, and \(\mu_N\) again denotes the empirical law of an i.i.d. sample from \(\mu\).
Let
\begin{equation}
\begin{aligned}
q_\lambda(x)
&=\int_\Theta\varrho(x,\theta)\,\lambda(\dd\theta),\\
\psi_x(\theta)
&:=\int_x^1\varrho(s,\theta)\dd s,\\
Q_\lambda(x)
&=\int_\Theta\psi_x(\theta)\,\lambda(\dd\theta).
\end{aligned}
\label{eq:type-integral-switching}
\end{equation}

\begin{theorem}[Influence function and central limit theorem for the threshold]
\label{thm:threshold-influence}
Let \(\theta_1,\theta_2,\ldots\) be i.i.d. with law \(\mu\).
Suppose \(x\mapsto\varrho(x,\theta)\) is continuous for \(\mu\)-almost every \(\theta\), and there are measurable nonnegative functions \(M,L\) such that
\begin{align}
\sup_{x\in[0,1]}|\varrho(x,\theta)|&\leq M(\theta),
\label{eq:forcing-envelope}\\
|\varrho(x,\theta)-\varrho(y,\theta)|&\leq L(\theta)|x-y|,
\label{eq:forcing-random-Lipschitz}
\end{align}
with \(\int_\Theta(M^2+L)\dd\mu<\infty\).
Assume that \(Q_\mu(b_\mu)=0\) for some \(b_\mu\in(0,1)\), and that there are \(\rho,\kappa>0\) such that
\begin{equation}
[b_\mu-\rho,b_\mu+\rho]\Subset(0,1),
\qquad
q_\mu(x)\leq-\kappa
\quad\text{when }|x-b_\mu|\leq\rho.
\label{eq:statistical-transversality}
\end{equation}
Then, almost surely for every sufficiently large \(N\), \(Q_{\mu_N}\) has exactly one zero \(\widehat b_N\) in \([b_\mu-\rho,b_\mu+\rho]\), and \(\widehat b_N\to b_\mu\).
Moreover,
\begin{equation}
\widehat b_N-b_\mu
=\frac1N\sum_{i=1}^N\operatorname{IF}(\theta_i;\mu)
+o_{\mathbb P}(N^{-1/2}),
\label{eq:threshold-asymptotic-linearization}
\end{equation}
where
\begin{equation}
\operatorname{IF}(\theta;\mu)
:=\frac{\psi_{b_\mu}(\theta)}{q_\mu(b_\mu)}
=\frac{\displaystyle\int_{b_\mu}^1\varrho(s,\theta)\dd s}
{q_\mu(b_\mu)}.
\label{eq:threshold-influence-function}
\end{equation}
Consequently,
\begin{equation}
\sqrt N(\widehat b_N-b_\mu)
\xrightarrow{\ d\ }
\mathcal N(0,\sigma_b^2),
\qquad
\sigma_b^2
:=\frac{\operatorname{Var}_\mu(\psi_{b_\mu}(\theta))}
{q_\mu(b_\mu)^2}.
\label{eq:threshold-general-CLT}
\end{equation}
If the empirical switching functions satisfy the sign representation in \cref{prop:sign-representation} and their nontrivial zeros lie in the displayed neighborhood, then \(\widehat b_N=b_{\mu_N}\) is the empirical free-boundary point.
\end{theorem}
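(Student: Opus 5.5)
We follow the standard Z-estimator argument for a scalar estimating equation, working on the fixed neighborhood $U:=[b_\mu-\rho,b_\mu+\rho]$.

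\textbf{Step 1: uniform laws of large numbers.} We show that, almost surely,
\[
\sup_{x\in[0,1]}|Q_{\mu_N}(x)-Q_\mu(x)|\to0
\quad\text{and}\quad
\sup_{x\in U}|q_{\mu_N}(x)-q_\mu(x)|\to0 .
\]
By \eqref{eq:forcing-random-Lipschitz}, the class $\{\varrho(x,\cdot):x\in[0,1]\}$ is Lipschitz in the index $x$ with integrable constant $L$, and it has integrable envelope $M$. A bracketing argument over a finite $\varepsilon$-grid of $[0,1]$ gives a Glivenko--Cantelli property. Concretely, the strong law applied at the grid points, together with the strong law for $\frac1N\sum_i L(\theta_i)$, gives
\[
\limsup_N\sup_x|q_{\mu_N}(x)-q_\mu(x)|\leq 2\varepsilon\int L\,\dd\mu
\]
almost surely. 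Letting $\varepsilon\downarrow0$ along a countable sequence proves the claim. The statement for $Q$ follows by integrating in $s$, since $|\psi_x(\theta)|\le M(\theta)$ and $|\psi_x-\psi_y|\le M|x-y|$.

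\textbf{Step 2: existence, uniqueness and consistency.} For large $N$, Step 1 and \eqref{eq:statistical-transversality} give $q_{\mu_N}\le-\kappa/2$ on $U$. Hence $Q_{\mu_N}'=-q_{\mu_N}\ge\kappa/2$ there, so $Q_{\mu_N}$ is strictly increasing on $U$ and has at most one zero in $U$.

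By transversality, $Q_\mu(b_\mu\pm\rho)$ equals $\pm c$ with $|c|\ge\kappa\rho$. Uniform convergence preserves these signs, so the intermediate value theorem yields a zero $\widehat b_N\in U$.

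For consistency, observe that
\[
\tfrac{\kappa}{2}|\widehat b_N-b_\mu|\le|Q_{\mu_N}(b_\mu)-Q_\mu(b_\mu)|=|Q_{\mu_N}(b_\mu)|\to0 .
\]
If the empirical switching functions satisfy \cref{prop:sign-representation} and have their nontrivial zero in $U$, uniqueness in $U$ forces $\widehat b_N=b_{\mu_N}$.

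\textbf{Step 3: linearization.} The mean value theorem gives some $\tilde x_N$ between $b_\mu$ and $\widehat b_N$ with
\[
0=Q_{\mu_N}(\widehat b_N)=Q_{\mu_N}(b_\mu)-q_{\mu_N}(\tilde x_N)(\widehat b_N-b_\mu).
\]
Therefore
\[
\widehat b_N-b_\mu=\frac{Q_{\mu_N}(b_\mu)}{q_{\mu_N}(\tilde x_N)},
\qquad
Q_{\mu_N}(b_\mu)=\frac1N\sum_{i=1}^N\psi_{b_\mu}(\theta_i),
\]
a centered i.i.d. average because $Q_\mu(b_\mu)=0$.

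This is the main obstacle: we must show $q_{\mu_N}(\tilde x_N)\to q_\mu(b_\mu)$ almost surely. It follows from the uniform convergence in Step 1, consistency $\tilde x_N\to b_\mu$, and continuity of $q_\mu$. That continuity comes from dominated convergence using the envelope $M$, or directly from the Lipschitz bound with $\int L\,\dd\mu<\infty$.

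\textbf{Step 4: CLT and conclusion.} Since $\psi_{b_\mu}^2\le M^2\in L^1(\mu)$, the classical central limit theorem gives
\[
\sqrt N\, Q_{\mu_N}(b_\mu)\xrightarrow{d}\mathcal N\bigl(0,\operatorname{Var}_\mu\psi_{b_\mu}\bigr),
\]
so in particular $\sqrt N Q_{\mu_N}(b_\mu)=O_{\mathbb P}(1)$. Write
\[
\frac{Q_{\mu_N}(b_\mu)}{q_{\mu_N}(\tilde x_N)}
=\frac{Q_{\mu_N}(b_\mu)}{q_\mu(b_\mu)}
+Q_{\mu_N}(b_\mu)\Bigl(\frac1{q_{\mu_N}(\tilde x_N)}-\frac1{q_\mu(b_\mu)}\Bigr).
\]
The second term is $O_{\mathbb P}(N^{-1/2})\cdot o(1)=o_{\mathbb P}(N^{-1/2})$. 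This proves the asymptotic linear representation \eqref{eq:threshold-asymptotic-linearization} with influence function \eqref{eq:threshold-influence-function}. Slutsky's lemma then gives \eqref{eq:threshold-general-CLT}.
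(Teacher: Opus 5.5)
Your proposal is correct and follows essentially the same route as the paper: a grid-based uniform strong law for $q_{\mu_N}$ and $Q_{\mu_N}$, then the slope bound $Q_{\mu_N}'\ge\kappa/2$ on the neighborhood for existence and uniqueness, then a mean-value expansion at $b_\mu$ with a centered i.i.d.\ numerator, and finally replacement of the denominator followed by the classical CLT and Slutsky. Your consistency bound $\tfrac{\kappa}{2}|\widehat b_N-b_\mu|\le|Q_{\mu_N}(b_\mu)|$ is a slightly more quantitative form of the paper's monotonicity argument; one small imprecision is that the endpoint values satisfy $Q_\mu(b_\mu+\rho)\ge\kappa\rho$ and $Q_\mu(b_\mu-\rho)\le-\kappa\rho$ but need not have equal magnitude, which does not affect the argument.
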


\begin{proof}
The envelope and random Lipschitz conditions imply the uniform laws of large numbers
\begin{equation}
\sup_{x\in[0,1]}|q_{\mu_N}(x)-q_\mu(x)|\longrightarrow0,
\qquad
\sup_{x\in[0,1]}|Q_{\mu_N}(x)-Q_\mu(x)|\longrightarrow0
\quad\text{a.s.}
\label{eq:forcing-uniform-LLN}
\end{equation}
Indeed, one applies the scalar strong law on a finite mesh and controls the gaps by the empirical averages of \(L\) for \(q\), and of \(M\) for \(Q\).
Condition \eqref{eq:statistical-transversality} gives
\(Q_\mu(b_\mu-\rho)\leq-\kappa\rho\) and
\(Q_\mu(b_\mu+\rho)\geq\kappa\rho\).
By \eqref{eq:forcing-uniform-LLN}, eventually these endpoint signs persist and \(q_{\mu_N}\leq-\kappa/2\) throughout the interval.
Thus \(Q_{\mu_N}'=-q_{\mu_N}\geq\kappa/2\) there, proving existence and uniqueness of \(\widehat b_N\).
The same uniform convergence, together with strict monotonicity, gives \(\widehat b_N\to b_\mu\).

For some random point \(\xi_N\) between \(b_\mu\) and \(\widehat b_N\), the mean-value theorem gives
\[
0=Q_{\mu_N}(\widehat b_N)
=Q_{\mu_N}(b_\mu)-q_{\mu_N}(\xi_N)(\widehat b_N-b_\mu).
\]
Because \(Q_\mu(b_\mu)=0\),
\[
Q_{\mu_N}(b_\mu)
=\frac1N\sum_{i=1}^N\psi_{b_\mu}(\theta_i),
\]
and \eqref{eq:forcing-uniform-LLN} and consistency imply
\(q_{\mu_N}(\xi_N)\to q_\mu(b_\mu)\) in probability.
The numerator is \(O_{\mathbb P}(N^{-1/2})\), because \(|\psi_{b_\mu}|\leq M\in L^2(\mu)\).
Replacing the random denominator by \(q_\mu(b_\mu)\) therefore has remainder \(o_{\mathbb P}(N^{-1/2})\), which proves \eqref{eq:threshold-asymptotic-linearization}.
The classical central limit theorem proves \eqref{eq:threshold-general-CLT}.
\end{proof}

For a finite signed measure \(H\) with \(H(\Theta)=0\), suppose that
\(\mu+tH\) remains a probability measure and its local root remains transversal for sufficiently small \(t\geq0\).
Differentiation of \(Q_{\mu+tH}(b_{\mu+tH})=0\) at \(t=0\) gives
\begin{equation}
D b_\mu[H]
=\frac{\displaystyle\int_\Theta\psi_{b_\mu}(\theta)H(\dd\theta)}
{q_\mu(b_\mu)}.
\label{eq:threshold-functional-derivative}
\end{equation}
Taking \(H=\delta_\theta-\mu\) recovers
\eqref{eq:threshold-influence-function} because
\(\int\psi_{b_\mu}\dd\mu=Q_\mu(b_\mu)=0\).

\begin{remark}[Transversal and degenerate statistical regimes]
The denominator in \eqref{eq:threshold-influence-function} is nonzero by
\eqref{eq:statistical-transversality}.
At an algebraically degenerate crossing it vanishes, and a root-\(N\) Gaussian limit should not be expected.
Formally, if
\(Q_\mu(b_\mu+h)=ch^{m_{\mathrm c}}+o(h^{m_{\mathrm c}})\) with \(m_{\mathrm c}>1\) and the empirical fluctuation at \(b_\mu\) is of order \(N^{-1/2}\), then the root moves on the slower scale \(N^{-1/(2m_{\mathrm c})}\) and generally has a nonlinear, non-Gaussian limit.
Thus \cref{thm:threshold-influence} and \cref{thm:empirical-W1-dimension} describe complementary structural and worst-case routes.
\end{remark}

\section{Higher-dimensional regular patches}
\label{sec:higher-dimensional}

In dimensions \(d\geq2\), an arbitrary obstacle free boundary can contain both regular and singular points, and its global geometry need not be stable in a strong norm.
We therefore make no global regularity claim.
This section gives the parameter-dependent step that applies after classical regularity theory or a problem-specific transform has produced a nondegenerate local defining function.

Let \(U\Subset\Omega\) be open.
For each \(\mu\in\mathfrak M\), suppose a relatively closed regular patch \(\Gamma_\mu^U\subset U\) is represented by
\begin{equation}
\Gamma_\mu^U=\{x\in U:\Psi_\mu(x)=0\},
\label{eq:defining-function}
\end{equation}
where \(\Psi_\mu\in C^1(U)\).
The function \(\Psi_\mu\) is a switching or hodograph variable, not necessarily the obstacle solution itself.
Indeed, the solution normally has zero gradient at a regular contact point.

\begin{assumption}[Uniform regular patch]
\label{ass:regular-patch}
There exist \(U_0\Subset U\), \(\rho>0\), \(\kappa>0\), and \(L_\Psi>0\) such that for all \(\mu,\nu\in\mathfrak M\):
\begin{enumerate}[label=(\alph*)]
\item \(\Gamma_\mu^U\subset U_0\), and the closed \(\rho\)-neighborhood of \(U_0\) is contained in \(U\);
\item \(|\nabla\Psi_\mu(x)|\geq\kappa\) throughout the closed \(\rho\)-neighborhood of \(U_0\);
\item \(\|\Psi_\mu-\Psi_\nu\|_{L^\infty(U)}\leq L_\Psi\Wass(\mu,\nu)\).
\end{enumerate}
\end{assumption}

\begin{lemma}[Stability of nondegenerate level sets]
\label{lem:level-set}
Let \(\Psi,\widetilde\Psi\in C^1(U)\) have zero sets contained in \(U_0\), and suppose both gradients have norm at least \(\kappa\) throughout the closed \(\rho\)-neighborhood of \(U_0\).
If
\[
\varepsilon:=\|\Psi-\widetilde\Psi\|_{L^\infty(U)}<\kappa\rho,
\]
then
\begin{equation}
d_H(\{\Psi=0\},\{\widetilde\Psi=0\})
\leq\frac{\varepsilon}{\kappa}.
\label{eq:level-set-Hausdorff}
\end{equation}
\end{lemma}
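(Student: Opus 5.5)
The plan is to prove the one-sided inclusion: every zero of \(\Psi\) lies within distance \(\varepsilon/\kappa\) of a zero of \(\widetilde\Psi\). The reverse inclusion then follows by exchanging the roles of \(\Psi\) and \(\widetilde\Psi\), since the hypotheses are symmetric. Together the two inclusions give \eqref{eq:level-set-Hausdorff}.

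Fix \(x_0\in U\) with \(\Psi(x_0)=0\), so that \(x_0\in U_0\) and \(|\widetilde\Psi(x_0)|=|\widetilde\Psi(x_0)-\Psi(x_0)|\leq\varepsilon\). If \(\widetilde\Psi(x_0)=0\) there is nothing to prove. Otherwise let \(s:=\operatorname{sgn}\widetilde\Psi(x_0)\) and follow a normalized gradient-descent curve of \(|\widetilde\Psi|\):
\[
\dot y(t)=-s\,\frac{\nabla\widetilde\Psi(y(t))}{|\nabla\widetilde\Psi(y(t))|},
\qquad y(0)=x_0 .
\]
Let \(N_\rho\) denote the closed \(\rho\)-neighborhood of \(U_0\). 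On \(N_\rho\) the gradient has norm at least \(\kappa\), so the right-hand side is continuous there. Peano's theorem therefore gives a \(C^1\) solution; uniqueness is not needed. Since \(|\dot y|=1\), we have \(|y(t)-x_0|\leq t\), so the curve stays in \(N_\rho\) for \(t\leq\rho\). Along the curve,
\[
\frac{\dd}{\dd t}\,s\widetilde\Psi(y(t))=-|\nabla\widetilde\Psi(y(t))|\leq-\kappa .
\]
Hence \(s\widetilde\Psi(y(t))\leq\varepsilon-\kappa t\) as long as it remains positive.

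Set \(t_*:=|\widetilde\Psi(x_0)|/\kappa\leq\varepsilon/\kappa<\rho\). The curve exists and stays in \(N_\rho\subset U\) on \([0,t_*]\), and \(s\widetilde\Psi(y(t_*))\leq 0\). By the intermediate value theorem there is \(t_1\leq t_*\) with \(\widetilde\Psi(y(t_1))=0\). The point \(y(t_1)\in U\) is therefore a zero of \(\widetilde\Psi\), and \(|y(t_1)-x_0|\leq t_1\leq\varepsilon/\kappa\). This gives \(\operatorname{dist}(x_0,\{\widetilde\Psi=0\})\leq\varepsilon/\kappa\).

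The main obstacle is making the continuation argument rigorous. A Peano solution is only local, so one must check that it extends up to time \(t_*\) without leaving the region where the vector field is defined and continuous. I would handle this with the standard extension theorem on the compact set \(N_\rho\): a maximal solution can only stop by approaching \(\partial N_\rho\), which is ruled out because \(t_*<\rho\). This is exactly where the smallness condition \(\varepsilon<\kappa\rho\) enters.

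If one wishes to avoid ODEs entirely, there is a fallback for this step. Apply the mean-value theorem along the straight segment in the direction \(-s\nabla\widetilde\Psi(x_0)/|\nabla\widetilde\Psi(x_0)|\), using a uniform modulus of continuity of \(\nabla\widetilde\Psi\) on the compact set \(N_\rho\). This yields an estimate with constant slightly worse than \(1/\kappa\), which can then be iterated. The flow argument gives the sharp constant \(\varepsilon/\kappa\) directly, so I would present that.
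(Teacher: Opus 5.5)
Your proposal is correct and follows essentially the paper's route: move a zero of \(\Psi\) along a gradient flow of \(\widetilde\Psi\) that stays inside the \(\rho\)-neighborhood of \(U_0\), bound the distance travelled by \(\varepsilon/\kappa\), and then exchange the two functions. The differences are minor: the paper uses the field \(\nabla\widetilde\Psi/|\nabla\widetilde\Psi|^2\), so \(\widetilde\Psi\) changes at unit rate and the endpoint is exactly a zero, whereas your unit-speed field needs the intermediate value step; and your Peano-plus-continuation argument handles merely \(C^1\) data directly, where the paper appeals to a smooth approximation.
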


\begin{proof}
Fix \(x\) with \(\Psi(x)=0\).
Then \(|\widetilde\Psi(x)|\leq\varepsilon\).
Consider the local flow \(X(t)\) solving
\[
X'(t)=\frac{\nabla\widetilde\Psi(X(t))}{|\nabla\widetilde\Psi(X(t))|^2},
\qquad X(0)=x.
\]
As long as the flow stays in the \(\rho\)-neighborhood of \(U_0\),
\[
\frac{\dd}{\dd t}\widetilde\Psi(X(t))=1,
\qquad
|X'(t)|\leq\kappa^{-1}.
\]
Run the flow for time \(-\widetilde\Psi(x)\), reversing its orientation if necessary.
The traveled distance is at most \(\varepsilon/\kappa<\rho\), and the endpoint \(y\) satisfies \(\widetilde\Psi(y)=0\).
Thus \(\operatorname{dist}(x,\{\widetilde\Psi=0\})\leq\varepsilon/\kappa\).
Interchanging the functions proves the reverse directed distance and hence \eqref{eq:level-set-Hausdorff}.
Standard local existence for the flow suffices; a smooth approximation gives the same conclusion when the defining functions are merely \(C^1\).
\end{proof}

\begin{theorem}[Local Hausdorff stability of regular patches]
\label{thm:higher-dimensional}
Under \cref{ass:regular-patch}, if
\[
\Wass(\mu,\nu)<\frac{\kappa\rho}{L_\Psi},
\]
then
\begin{equation}
d_H(\Gamma_\mu^U,\Gamma_\nu^U)
\leq\frac{L_\Psi}{\kappa}\Wass(\mu,\nu).
\label{eq:higher-Hausdorff}
\end{equation}
\end{theorem}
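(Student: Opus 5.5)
The plan is to reduce \cref{thm:higher-dimensional} directly to \cref{lem:level-set}, applied with \(\Psi=\Psi_\mu\) and \(\widetilde\Psi=\Psi_\nu\). The hypotheses of the lemma are exactly the pointwise statements in \cref{ass:regular-patch}, so we only need to check them one by one and then translate the lemma's \(\varepsilon\) into a Wasserstein quantity.

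First, part (a) of \cref{ass:regular-patch} gives \(\{\Psi_\mu=0\}=\Gamma_\mu^U\subset U_0\) and \(\{\Psi_\nu=0\}=\Gamma_\nu^U\subset U_0\). Here we use the representation \eqref{eq:defining-function}, which expresses each patch as the zero set of its defining function inside \(U\). Part (a) also says that the closed \(\rho\)-neighborhood of \(U_0\) lies in \(U\), so the flow in the proof of the lemma stays where both functions are defined and \(C^1\). Second, part (b) supplies the uniform lower gradient bound \(|\nabla\Psi_\mu|,|\nabla\Psi_\nu|\geq\kappa\) on that neighborhood. Third, part (c) gives
\[
\varepsilon:=\|\Psi_\mu-\Psi_\nu\|_{L^\infty(U)}\leq L_\Psi\Wass(\mu,\nu)<\kappa\rho,
\]
where the last inequality is the smallness hypothesis \(\Wass(\mu,\nu)<\kappa\rho/L_\Psi\). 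This is precisely the condition \(\varepsilon<\kappa\rho\) required by \cref{lem:level-set}.

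Invoking \eqref{eq:level-set-Hausdorff} then yields
\[
d_H(\Gamma_\mu^U,\Gamma_\nu^U)\leq\frac{\varepsilon}{\kappa}\leq\frac{L_\Psi}{\kappa}\Wass(\mu,\nu),
\]
which is \eqref{eq:higher-Hausdorff}.

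The only point needing care is the degenerate case in which one patch is empty. The Hausdorff distance between two sets of which exactly one is empty is infinite, and the lemma's flow argument cannot be started from an empty set. However, the flow argument is constructive: starting from any point of \(\Gamma_\mu^U\), it produces a point of \(\Gamma_\nu^U\) within distance \(\varepsilon/\kappa\), and symmetrically. So if one patch is nonempty, the other is nonempty as well. I would add one sentence noting this, with the convention \(d_H(\emptyset,\emptyset)=0\). Beyond this bookkeeping there is no real obstacle, since all of the analytic work already sits inside \cref{lem:level-set}.
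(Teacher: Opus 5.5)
Your proposal is correct and follows the paper's proof, which is the one-line application of \cref{lem:level-set} with \(\varepsilon\) controlled by \(L_\Psi\Wass(\mu,\nu)<\kappa\rho\). You take \(\varepsilon\) to be the actual sup-norm, as the lemma defines it, and then bound it; you also add a remark on empty patches, which the paper omits but which is consistent with the flow argument.
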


\begin{proof}
Apply \cref{lem:level-set} with \(\varepsilon=L_\Psi\Wass(\mu,\nu)\).
\end{proof}

\begin{remark}[Content and limitation of the theorem]
The parameter-dependent conclusion in \eqref{eq:higher-Hausdorff} follows from the stable defining-function assumptions.
Obtaining such a function from a particular multidimensional obstacle problem may require the classical regularity theory and additional smooth dependence on the data.
Theorems such as those of \citet{Blank2001} and \citet{SerfatySerra2018} provide substantially deeper information in their respective settings.
\Cref{thm:higher-dimensional} is best read as a reusable final step for measure-parametric models, not as a replacement for that theory.
\end{remark}

\section{Application to a nonlinear corporate-tax schedule}
\label{sec:tax}

This section gives a fully explicit application of the abstract theory.
The model uses a reduced set of primitives.
Its purpose is to show how a heterogeneous lower-level behavioral response can generate the measure-dependent forcing used above, not to replace a structural public-finance model.

\subsection{Firm response}

The observable state \(x\in(0,1)\) is a normalized firm characteristic, such as taxable-profit rank or a size index.
The government chooses a schedule \(\tau(x)\).
Fix a statutory ceiling \(\bar\tau\in(0,1)\) and define
\[
\K_{\bar\tau}
:=
\{\tau\in H^1(0,1):\tau(0)=0,\ 0\leq\tau\leq\bar\tau\text{ a.e.}\}.
\]
The endpoint normalization \(\tau(0)=0\) anchors the schedule at the bottom of the normalized characteristic range.

A firm of type \(\theta\in\Theta\) chooses investment \(k\geq0\) after observing the local rate \(z=\tau(x)\):
\begin{equation}
k^*(\theta,z)
=\argmax_{k\geq0}
\left\{(1-z)A(\theta)k-\frac{r(\theta)}2k^2\right\}.
\label{eq:firm-problem}
\end{equation}
Assume \(A,r:\Theta\to(0,\infty)\) are bounded and Lipschitz, with \(r(\theta)\geq r_0>0\).
Equivalently, the firm minimizes
\[
\Phi(\theta,z,k)
=\frac{r(\theta)}2k^2-(1-z)A(\theta)k.
\]
Since \(0\leq z\leq\bar\tau<1\), the unconstrained optimizer is positive and
\begin{equation}
k^*(\theta,z)=\frac{A(\theta)}{r(\theta)}(1-z).
\label{eq:firm-response}
\end{equation}
The response is endogenous and affine in the local scalar rate; the optimal schedule derived below is nonlinear in \(x\).

\begin{proposition}[Direct verification of the firm response]
\label{prop:tax-response}
The firm problem has a unique response, and that response obeys
\begin{align}
|k^*(\theta,z_1)-k^*(\theta,z_2)|
&\leq \left\|\frac Ar\right\|_\infty|z_1-z_2|,
\label{eq:tax-response-z}\\
|k^*(\theta,z)-k^*(\eta,z)|
&\leq \operatorname{Lip}(A/r)d_\Theta(\theta,\eta).
\label{eq:tax-response-theta}
\end{align}
\end{proposition}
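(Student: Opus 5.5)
The plan is to verify the firm response directly from its closed form \eqref{eq:firm-response}, and to note that \cref{thm:response} gives the same estimates. First, fix \(\theta\) and \(z\in[0,\bar\tau]\). The objective \(\Phi(\theta,z,\cdot)\) is a quadratic in \(k\) with leading coefficient \(r(\theta)/2\geq r_0/2>0\), so it is strictly convex on \([0,\infty)\). Its unconstrained stationary point is \(A(\theta)(1-z)/r(\theta)\). Since \(A>0\) and \(1-z\geq1-\bar\tau>0\), this point is strictly positive. It therefore satisfies the constraint \(k\geq0\) and is the unique constrained minimizer, which gives uniqueness and \eqref{eq:firm-response}. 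I would also remark that \(\nabla_k\Phi=r(\theta)k-(1-z)A(\theta)\) is strongly monotone with \(m_\Phi=r_0\). This places the model inside \cref{ass:response}.

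For \eqref{eq:tax-response-z}, I would subtract the closed forms:
\[
k^*(\theta,z_1)-k^*(\theta,z_2)=\frac{A(\theta)}{r(\theta)}(z_2-z_1),
\]
then bound \(A(\theta)/r(\theta)\) by \(\|A/r\|_\infty\). This supremum is finite because \(A\) is bounded and \(r\geq r_0\). Note that \(\|A/r\|_\infty\) is sharper than the generic constant \(L_z/m_\Phi=\|A\|_\infty/r_0\) that \cref{thm:response} would give, which is why the direct computation is preferred.

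For \eqref{eq:tax-response-theta}, I would write
\[
k^*(\theta,z)-k^*(\eta,z)=(1-z)\Bigl(\frac{A}{r}(\theta)-\frac{A}{r}(\eta)\Bigr)
\]
and use \(|1-z|\leq1\), valid on the admissible range \(0\leq z\leq\bar\tau<1\).

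The only genuine point to check is that \(A/r\) is Lipschitz, so that \(\operatorname{Lip}(A/r)\) is finite. This follows from the quotient estimate
\[
\Bigl|\frac{A(\theta)}{r(\theta)}-\frac{A(\eta)}{r(\eta)}\Bigr|\leq\frac{|A(\theta)-A(\eta)|}{r(\theta)}+\frac{A(\eta)\,|r(\eta)-r(\theta)|}{r(\theta)r(\eta)},
\]
which gives \(\operatorname{Lip}(A/r)\leq\operatorname{Lip}(A)/r_0+\|A\|_\infty\operatorname{Lip}(r)/r_0^2\). The main caveat is that both estimates are stated for rates in \([0,\bar\tau]\). Outside this range, \(k^*\) could hit the constraint \(k=0\) and the affine formula would fail. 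I would make that restriction explicit in the argument.
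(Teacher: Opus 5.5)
Your argument is correct and is the paper's proof: strict convexity from \(r\geq r_0\), positivity of the stationary point \(A(\theta)(1-z)/r(\theta)\) for \(0\leq z\leq\bar\tau<1\), and direct subtraction of the closed forms using \(0<1-z\leq1\). Delete your aside that the model lies inside \cref{ass:response}, so that \cref{thm:response} reproduces the estimates. When \(r\) depends on type, \(|(r(\theta)-r(\eta))k-(1-z)(A(\theta)-A(\eta))|\) is not bounded by \(L_\theta d_\Theta(\theta,\eta)\) uniformly in \(k\in\R\), so \eqref{eq:grad-lip-theta} fails, as the paper notes right after the proposition; this is why the direct computation is needed, not merely preferred.
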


\begin{proof}
The second derivative of \(\Phi\) in \(k\) is \(r(\theta)\geq r_0\).
Formula \eqref{eq:firm-response} follows from the first-order condition and positivity.
The two estimates follow directly, using \(0<1-z\leq1\) in the second.
\end{proof}

When \(r\) varies with type, the primitive gradient \(r(\theta)k-(1-z)A(\theta)\) need not satisfy the globally uniform type estimate in \eqref{eq:grad-lip-theta} for every \(k\in\R\).
The explicit calculation above supplies exactly the response properties needed here, so the application does not invoke that stronger sufficient assumption.

\subsection{Government loss and reduction}

Let \(\alpha>0\) be the slope of the local fiscal value of a unit of tax liability, let \(c(\theta)\geq0\) be a type-dependent implementation or distortion coefficient, and let \(\omega>0\) be the weight on firm activity.
For a local policy \(z\) and response \(k\), take the primitive government loss
\begin{equation}
L(x,\theta,z,k)
=-\alpha xz+c(\theta)z-\omega k.
\label{eq:government-primitive}
\end{equation}
The first term is the local fiscal benefit, the second is an implementation or distortion cost, and the last values investment.
The total loss also includes the smoothness cost \(\frac\gamma2\int_0^1|\tau'|^2\), which represents the administrative and behavioral cost of abrupt differentiation across nearby observable firm states.
Both \(x\) and the term \(\alpha xz\) are reduced-form cardinal objects; in particular, \eqref{eq:government-primitive} is not a structural accounting identity for statutory tax revenue.

Substitution of \eqref{eq:firm-response} into \eqref{eq:government-primitive} gives
\begin{align}
f(x,\theta,z)
&=-\omega\frac{A(\theta)}{r(\theta)}
+\left[c(\theta)+\omega\frac{A(\theta)}{r(\theta)}-\alpha x\right]z\\
&=-\omega\frac{A(\theta)}{r(\theta)}-q(x,\theta)z,
\label{eq:tax-reduced-integrand}
\end{align}
where
\begin{equation}
h(\theta):=c(\theta)+\omega\frac{A(\theta)}{r(\theta)},
\qquad
q(x,\theta):=\alpha x-h(\theta).
\label{eq:h-q-tax}
\end{equation}
For \(\theta\sim\mu\), write
\begin{equation}
\bar h_\mu:=\int_\Theta h(\theta)\,\mu(\dd\theta),
\qquad
q_\mu(x)=\alpha x-\bar h_\mu.
\label{eq:tax-forcing}
\end{equation}
Up to the policy-independent constant \(-\omega\int A/r\,\dd\mu\), the government problem is
\begin{equation}
\min_{\tau\in\K_{\bar\tau}}
\left\{
\frac\gamma2\int_0^1|\tau'(x)|^2\dd x
-\int_0^1(\alpha x-\bar h_\mu)\tau(x)\dd x
\right\}.
\label{eq:tax-upper-problem}
\end{equation}

Under \cref{ass:tax-class} below, extend the affine expression in \eqref{eq:tax-reduced-integrand} from the admissible interval \([0,\bar\tau]\) to all \(z\in\R\); this does not alter the constrained policy problem.
The resulting reduced integrand satisfies \cref{ass:reduced} directly.
Indeed, its derivative is \(G_\mu(x,z)=-q_\mu(x)=\bar h_\mu-\alpha x\), which is monotone in \(z\), and \eqref{eq:KR} gives
\begin{equation}
|G_\mu(x,z)-G_\nu(x,z)|
\leq \operatorname{Lip}(h)\Wass(\mu,\nu).
\label{eq:tax-reduced-W1}
\end{equation}
The remaining growth and lower bounds are uniform because \(A/r\) and \(h\) are bounded.

We now state the parameter restrictions transparently.

\begin{assumption}[Interior tax-threshold class]
\label{ass:tax-class}
The function \(h\) is bounded and Lipschitz, and its range is contained in
\[
[h_-,h_+]\Subset(\alpha/2,\alpha).
\]
Finally, the statutory ceiling satisfies
\begin{equation}
\frac{\alpha}{12\gamma}
\left(2-\frac{2h_-}{\alpha}\right)^3
<\bar\tau<1.
\label{eq:upper-obstacle-inactive}
\end{equation}
\end{assumption}

The exact formulas below do not require an additional common-interval condition.
If one instead wants to invoke the generic transversality theorem verbatim, the optional restriction
\begin{equation}
2h_+-h_-<\alpha
\label{eq:tax-uniform-crossing}
\end{equation}
verifies \cref{ass:transversality} with
\[
I=\left[\frac{2h_-}{\alpha}-1,\frac{2h_+}{\alpha}-1\right],
\qquad
\kappa=\alpha+h_--2h_+>0.
\]

The lower bound on \(\bar\tau\) guarantees that the analytically derived schedule does not touch the upper obstacle.
If the ceiling is lower, the model becomes a double-obstacle problem with a second free boundary, which is outside the present theorem chain.

\begin{theorem}[Explicit optimal corporate-tax schedule]
\label{thm:tax-schedule}
Under \cref{ass:tax-class}, problem \eqref{eq:tax-upper-problem} has a unique solution.
Define
\begin{equation}
b_\mu:=\frac{2\bar h_\mu}{\alpha}-1\in(0,1).
\label{eq:tax-threshold}
\end{equation}
Then
\begin{equation}
\tau_\mu(x)
=
\begin{cases}
0, & 0\leq x\leq b_\mu,\\[0.4em]
\displaystyle
\frac{\alpha}{12\gamma}(x-b_\mu)^2
\bigl[3(1-b_\mu)-2(x-b_\mu)\bigr],
& b_\mu<x\leq1.
\end{cases}
\label{eq:explicit-tax-schedule}
\end{equation}
The schedule is \(C^1\), strictly increasing on \((b_\mu,1)\), and satisfies
\begin{equation}
\tau_\mu(1)=\frac{\alpha}{12\gamma}(1-b_\mu)^3<\bar\tau.
\label{eq:tax-maximum}
\end{equation}
Thus the zero-tax region is \([0,b_\mu]\), the upper obstacle is inactive, and the only interior free-boundary point is \(b_\mu\).
\end{theorem}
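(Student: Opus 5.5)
The plan is to reduce the double-obstacle problem \eqref{eq:tax-upper-problem} to the one-sided problem of \cref{thm:exact-one-dimensional}. Then I will compute the resulting policy explicitly and check that it stays below the ceiling.

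First I verify \cref{ass:single-crossing} for \(q_\mu(x)=\alpha x-\bar h_\mu\). Since \(h\) takes values in \([h_-,h_+]\Subset(\alpha/2,\alpha)\), its mean satisfies \(\bar h_\mu\in(\alpha/2,\alpha)\). The forcing \(q_\mu\) is strictly increasing and continuous, and it vanishes only at \(c_\mu=\bar h_\mu/\alpha\in(1/2,1)\). Its total is \(\int_0^1q_\mu=\alpha/2-\bar h_\mu<0\). The cumulative forcing is
\[
Q_\mu(x)=\int_x^1(\alpha s-\bar h_\mu)\dd s=(1-x)\Bigl[\tfrac\alpha2(1+x)-\bar h_\mu\Bigr].
\]
Its nontrivial zero is \(b_\mu=2\bar h_\mu/\alpha-1\), which lies in \((0,1)\) because \(\bar h_\mu\in(\alpha/2,\alpha)\). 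This is consistent with \cref{lem:switching-point}.

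Next, \cref{thm:exact-one-dimensional} gives the unique minimizer \(u_\mu\) of the one-sided problem over \(\K\). On \((b_\mu,1]\) it equals \(\gamma^{-1}\int_{b_\mu}^xQ_\mu\). Writing \(t=s-b_\mu\) and using \(\bar h_\mu=\tfrac\alpha2(1+b_\mu)\), one gets \(Q_\mu(s)=\tfrac\alpha2(1-s)(s-b_\mu)=\tfrac\alpha2 t\bigl[(1-b_\mu)-t\bigr]\). Integrating from \(0\) to \(x-b_\mu\) produces
\[
\frac{\alpha}{2\gamma}\Bigl[\tfrac{(1-b_\mu)(x-b_\mu)^2}{2}-\tfrac{(x-b_\mu)^3}{3}\Bigr],
\]
which is exactly \eqref{eq:explicit-tax-schedule}. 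The remaining regularity claims also follow from the theorem: \(C^1\) regularity, strict increase on \((b_\mu,1)\) (because \(Q_\mu>0\) there), the contact set \([0,b_\mu]\), and the free boundary \(\{b_\mu\}\). Evaluating at \(x=1\) gives \(\tau_\mu(1)=\tfrac{\alpha}{12\gamma}(1-b_\mu)^3\).

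The step I expect to be the main obstacle is passing from \(\K\) to \(\K_{\bar\tau}\); this is also where the bound \eqref{eq:upper-obstacle-inactive} enters. Since \(\bar h_\mu\geq h_-\), we have \(1-b_\mu=2-2\bar h_\mu/\alpha\leq 2-2h_-/\alpha\). Hence \(\tau_\mu(1)<\bar\tau\) by \eqref{eq:upper-obstacle-inactive}. Because \(\tau_\mu\) is nondecreasing, \(0\leq\tau_\mu\leq\tau_\mu(1)<\bar\tau\) everywhere, so \(\tau_\mu\in\K_{\bar\tau}\subset\K\). A minimizer of the energy over the larger set \(\K\) that lies in the smaller set \(\K_{\bar\tau}\) is a minimizer over \(\K_{\bar\tau}\).

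Uniqueness over \(\K_{\bar\tau}\) follows from strict convexity of the energy \eqref{eq:linear-energy}, exactly as in \cref{thm:wellposed}. The set \(\K_{\bar\tau}\) is convex, the Dirichlet term is strictly convex on functions vanishing at \(0\), and the forcing term is linear. Alternatively, any other minimizer over \(\K_{\bar\tau}\) would have energy no larger than that of \(\tau_\mu\), which is the minimum over \(\K\). It would therefore also minimize over \(\K\), and it must equal \(\tau_\mu\) by \cref{thm:exact-one-dimensional}. Finally, the upper obstacle is inactive because \(\tau_\mu<\bar\tau\) everywhere, so \(b_\mu\) is the only interior free-boundary point.
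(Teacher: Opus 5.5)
Your proposal is correct and follows essentially the same route as the paper. You verify \cref{ass:single-crossing} for the affine forcing, factor \(Q_\mu=\frac\alpha2(1-x)(x-b_\mu)\), integrate via \cref{thm:exact-one-dimensional}, use \(\bar h_\mu\geq h_-\) together with \eqref{eq:upper-obstacle-inactive} to get \(\tau_\mu(1)<\bar\tau\), and conclude that the one-sided minimizer is feasible on \(\K_{\bar\tau}\) and hence its unique minimizer there. Your use of monotonicity to obtain \(0\leq\tau_\mu\leq\tau_\mu(1)\), and your explicit uniqueness argument, only spell out steps the paper's proof leaves implicit.
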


\begin{proof}
The affine forcing \(q_\mu(x)=\alpha x-\bar h_\mu\) is strictly increasing, crosses zero at \(c_\mu=\bar h_\mu/\alpha\in(1/2,1)\), and has
\[
\int_0^1q_\mu(x)\dd x=\frac\alpha2-\bar h_\mu<0.
\]
Therefore \cref{ass:single-crossing} holds.
The cumulative forcing is
\begin{align}
Q_\mu(x)
&=\frac\alpha2(1-x^2)-\bar h_\mu(1-x)\\
&=\frac\alpha2(1-x)(x-b_\mu),
\label{eq:tax-Q}
\end{align}
where the second equality follows from \eqref{eq:tax-threshold}.
Its nontrivial zero is \(b_\mu\), which lies in \((0,c_\mu)\).
Integrating \(Q_\mu/\gamma\) from \(b_\mu\) to \(x\) yields \eqref{eq:explicit-tax-schedule}.
The derivative equals \(Q_\mu/\gamma>0\) on \((b_\mu,1)\), and evaluation at \(x=1\) gives \eqref{eq:tax-maximum}.
Because \(b_\mu\geq2h_-/\alpha-1\), condition \eqref{eq:upper-obstacle-inactive} implies \(\tau_\mu(1)<\bar\tau\).
The lower-obstacle minimizer is therefore feasible for \(\K_{\bar\tau}\) and remains the unique minimizer on this smaller set.
\end{proof}

\begin{proposition}[Shape and value of the tax schedule]
\label{prop:tax-shape-value}
Under \cref{ass:tax-class}, the positive part of the schedule has one inflection point at
\begin{equation}
x_\mu^{\mathrm{inf}}=\frac{1+b_\mu}{2}.
\label{eq:tax-inflection}
\end{equation}
More precisely,
\begin{equation}
\tau_\mu'(x)=\frac\alpha{2\gamma}(1-x)(x-b_\mu),
\qquad
\tau_\mu''(x)=\frac\alpha{2\gamma}(1+b_\mu-2x)
\label{eq:tax-derivatives}
\end{equation}
for \(b_\mu<x<1\).
If \(m_{\mathrm{red}}(\mu)\) denotes the minimum of \eqref{eq:tax-upper-problem} after omitting its policy-independent constant, then
\begin{equation}
m_{\mathrm{red}}(\mu)
=-\frac{\alpha^2}{240\gamma}(1-b_\mu)^5
=-\frac{2\alpha^2}{15\gamma}
\left(1-\frac{\bar h_\mu}{\alpha}\right)^5.
\label{eq:tax-optimal-value}
\end{equation}
Consequently, along a differentiable path \(t\mapsto\bar h_t\),
\begin{equation}
\frac{\dd}{\dd t}m_{\mathrm{red}}(t)
=\frac{2\alpha}{3\gamma}
\left(1-\frac{\bar h_t}{\alpha}\right)^4\bar h_t'.
\label{eq:tax-value-derivative}
\end{equation}
\end{proposition}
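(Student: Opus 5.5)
The plan is to work directly from the explicit formulas in \cref{thm:tax-schedule} and its proof. On $(b_\mu,1)$ the derivative formula from \cref{thm:exact-one-dimensional} gives $\tau_\mu'=Q_\mu/\gamma$, and \eqref{eq:tax-Q} supplies $Q_\mu=\frac\alpha2(1-x)(x-b_\mu)$; this is the first identity in \eqref{eq:tax-derivatives}. Differentiating the quadratic once more gives $\tau_\mu''=\frac{\alpha}{2\gamma}(1+b_\mu-2x)$. This is affine and strictly decreasing in $x$, positive near $b_\mu$ and negative near $1$, and it vanishes only at $x=(1+b_\mu)/2$. That point lies strictly inside $(b_\mu,1)$ because $b_\mu<1$, so $\tau_\mu''$ changes sign exactly once and \eqref{eq:tax-inflection} follows.

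For the value, I avoid integrating the energy of the explicit schedule term by term. Instead I use the identity $\J_\mu(u_\mu)=-\frac\gamma2\int_0^1|u_\mu'|^2$. It comes from testing \eqref{eq:VI} with $v=0$ and $v=2u_\mu$, which are both admissible in the lower-obstacle cone, to obtain the equality $\gamma\int|u_\mu'|^2=\int q_\mu u_\mu$. Since \cref{thm:tax-schedule} shows that the upper obstacle is inactive and the minimizer coincides with the lower-obstacle one, this applies to $m_{\mathrm{red}}$. We then have
\[
m_{\mathrm{red}}(\mu)=-\frac{1}{2\gamma}\int_{b_\mu}^1Q_\mu^2\dd x
=-\frac{\alpha^2}{8\gamma}\int_{b_\mu}^1(1-x)^2(x-b_\mu)^2\dd x .
\]
Substituting $x=b_\mu+(1-b_\mu)s$ reduces the integral to $(1-b_\mu)^5\int_0^1s^2(1-s)^2\dd s=(1-b_\mu)^5/30$. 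This yields $-\alpha^2(1-b_\mu)^5/(240\gamma)$. Using $1-b_\mu=2(1-\bar h_\mu/\alpha)$ from \eqref{eq:tax-threshold}, we get $2^5/240=2/15$ and the second form of \eqref{eq:tax-optimal-value}.

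Finally, \eqref{eq:tax-value-derivative} follows from the chain rule applied to the closed form. The derivative of $(1-\bar h_t/\alpha)^5$ is $-5(1-\bar h_t/\alpha)^4\bar h_t'/\alpha$, and multiplying by $-2\alpha^2/(15\gamma)$ gives $\frac{2\alpha}{3\gamma}(1-\bar h_t/\alpha)^4\bar h_t'$. I expect the main point of care to be justifying the energy identity in the presence of the upper obstacle. If testing with $2u_\mu$ is problematic in $\K_{\bar\tau}$, I will instead note that the reduced problem's minimizer equals the lower-obstacle minimizer, so its value is the lower-obstacle value, where the identity is clean. As a fallback, I would directly integrate $\frac\gamma2|\tau'|^2-q_\mu\tau$ using the explicit polynomial, after integrating $-\int q_\mu\tau$ by parts with $Q_\mu(1)=0$ and $\tau(b_\mu)=0$.
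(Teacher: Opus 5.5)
Your proposal is correct and follows essentially the same route as the paper. You read the derivatives off \(\tau_\mu'=Q_\mu/\gamma\) and reduce the value to \(-\frac{\gamma}{2}\int_0^1|\tau_\mu'|^2\) via the energy identity \(\gamma\int_0^1|\tau_\mu'|^2=\int_0^1 q_\mu\tau_\mu\); the same polynomial integral and the chain rule then finish the argument. The only cosmetic difference is how you obtain that identity: you test the lower-obstacle variational inequality with \(v=0\) and \(v=2\tau_\mu\), which is legitimate because \(\tau_\mu\) coincides with the lower-obstacle minimizer, whereas the paper invokes complementarity and integration by parts.
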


\begin{proof}
Differentiate \eqref{eq:explicit-tax-schedule}, or use \(\tau_\mu'=Q_\mu/\gamma\) and \eqref{eq:tax-Q}, to obtain \eqref{eq:tax-derivatives}.
The second derivative changes sign once, at \eqref{eq:tax-inflection}.

Complementarity and integration by parts give the energy identity
\[
\gamma\int_0^1|\tau_\mu'|^2\dd x
=\int_0^1q_\mu\tau_\mu\dd x.
\]
Hence the reduced minimum is \(-\frac\gamma2\int|\tau_\mu'|^2\).
With \(L=1-b_\mu\) and \(s=x-b_\mu\),
\begin{align*}
\int_0^1|\tau_\mu'|^2\dd x
&=\frac{\alpha^2}{4\gamma^2}
\int_0^L s^2(L-s)^2\dd s\\
&=\frac{\alpha^2L^5}{120\gamma^2}.
\end{align*}
This proves the first equality in \eqref{eq:tax-optimal-value}; the second follows from \(1-b_\mu=2(1-\bar h_\mu/\alpha)\).
Differentiation gives \eqref{eq:tax-value-derivative}.
\end{proof}

\begin{corollary}[Wasserstein stability of the tax schedule and threshold]
\label{cor:tax-stability}
Under \cref{ass:tax-class}, for any \(\mu,\nu\in\Pp_1(\Theta)\),
\begin{align}
|b_\mu-b_\nu|
&=\frac2\alpha|\bar h_\mu-\bar h_\nu|
\leq\frac{2\operatorname{Lip}(h)}{\alpha}\Wass(\mu,\nu),
\label{eq:tax-threshold-stability}\\
\|\tau_\mu'-\tau_\nu'\|_{L^\infty(0,1)}
&\leq\frac{\operatorname{Lip}(h)}{\gamma}\Wass(\mu,\nu),
\label{eq:tax-C1-derivative}\\
\|\tau_\mu-\tau_\nu\|_{L^\infty(0,1)}
&\leq\frac{\operatorname{Lip}(h)}{\gamma}\Wass(\mu,\nu).
\label{eq:tax-C1-level}
\end{align}
The threshold estimate is global on this class and does not require the less sharp generic constant \(L_q/\kappa\).
\end{corollary}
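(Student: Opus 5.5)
The plan is to read all three estimates off the explicit formulas in \cref{thm:tax-schedule}, using \eqref{eq:KR} for the only measure-dependent quantity, \(\bar h_\mu\). First I check that any \(\mu\in\Pp_1(\Theta)\) is admissible. Since the range of \(h\) lies in \([h_-,h_+]\), the mean \(\bar h_\mu\) also lies in \([h_-,h_+]\). Hence \cref{ass:tax-class} holds for every law, and \cref{thm:tax-schedule} applies to both \(\mu\) and \(\nu\) with no further restriction. This is why the estimates are global on the class.

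For the threshold, the definition \eqref{eq:tax-threshold} is affine in \(\bar h_\mu\). It gives directly \(b_\mu-b_\nu=\frac2\alpha(\bar h_\mu-\bar h_\nu)\), which is the equality in \eqref{eq:tax-threshold-stability}. Since \(h\) is Lipschitz, \eqref{eq:KR} gives \(|\bar h_\mu-\bar h_\nu|\le\operatorname{Lip}(h)\Wass(\mu,\nu)\), and this yields the inequality. No transversality constant or common interval is involved, so the generic constant \(L_q/\kappa\) of \cref{thm:threshold-stability} is not needed.

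For the schedule, I use that \(\tau_\mu\) coincides with the lower-obstacle policy of \cref{thm:exact-one-dimensional}, because the upper obstacle is inactive by \eqref{eq:tax-maximum}. Then \(\tau_\mu'=\gamma^{-1}[Q_\mu]_+\), so \cref{prop:C1-stability} applies with \(q_\mu-q_\nu=\bar h_\nu-\bar h_\mu\), a constant. Its \(L^\infty\) norm is at most \(\operatorname{Lip}(h)\Wass(\mu,\nu)\), so \eqref{eq:q-W1} holds with \(L_q=\operatorname{Lip}(h)\). The cumulative difference satisfies \(|Q_\mu(x)-Q_\nu(x)|=(1-x)|\bar h_\mu-\bar h_\nu|\le|\bar h_\mu-\bar h_\nu|\). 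The 1-Lipschitz positive part then gives \eqref{eq:tax-C1-derivative}. Integrating from \(x=0\), where both schedules vanish, over an interval of length at most one gives \eqref{eq:tax-C1-level}.

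The only delicate point is to confirm that the general one-dimensional machinery is legitimately applicable. This means checking that \cref{ass:single-crossing} holds for every \(\mu\), which is verified in the proof of \cref{thm:tax-schedule}, and that the ceiling does not change the minimizer. Both facts are already established, so the argument reduces to the affine dependence on \(\bar h_\mu\).
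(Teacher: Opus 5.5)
Your proposal is correct and follows the paper's own proof: the equality comes from the affine formula \eqref{eq:tax-threshold}, the Wasserstein bound from \eqref{eq:KR}, and the schedule estimates from \cref{prop:C1-stability} applied with the constant forcing difference \(q_\mu-q_\nu=\bar h_\nu-\bar h_\mu\). You also check explicitly that every law gives \(\bar h_\mu\in[h_-,h_+]\), so \cref{thm:tax-schedule} applies to both \(\mu\) and \(\nu\) with the ceiling inactive; this is a useful detail that the paper leaves implicit.
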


\begin{proof}
Formula \eqref{eq:tax-threshold} gives the equality.
The bound on \(\bar h_\mu-\bar h_\nu\) follows from \eqref{eq:KR}.
Since \(q_\mu-q_\nu=-(\bar h_\mu-\bar h_\nu)\), \cref{prop:C1-stability} gives the remaining estimates.
\end{proof}

\begin{corollary}[Empirical corporate-tax threshold]
\label{cor:tax-empirical}
Let \(\Theta=[\theta_-,\theta_+]\), let \(\mu_N\) be defined by \eqref{eq:empirical-law}, and let \(D=\theta_+-\theta_-\).
Then
\begin{equation}
\mathbb E|b_{\mu_N}-b_\mu|
\leq\frac{\operatorname{Lip}(h)D}{\alpha\sqrt N}.
\label{eq:tax-expected-threshold}
\end{equation}
With probability at least \(1-\delta\),
\begin{equation}
|b_{\mu_N}-b_\mu|
\leq
\frac{2\operatorname{Lip}(h)D}{\alpha}
\sqrt{\frac{\log(2/\delta)}{2N}}.
\label{eq:tax-high-prob-threshold}
\end{equation}
\end{corollary}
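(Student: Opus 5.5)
The plan is to combine the exact threshold identity of \cref{cor:tax-stability} with the one-dimensional empirical bounds of \cref{thm:empirical-rate}. The first step is to check that every empirical law stays in the admissible class. Since \(h\) takes values in \([h_-,h_+]\Subset(\alpha/2,\alpha)\), every empirical mean \(\bar h_{\mu_N}=N^{-1}\sum_i h(\theta_i)\) lies in the same interval. Hence \(\mu_N\) satisfies \cref{ass:tax-class} almost surely, and \cref{thm:tax-schedule} applies to \(\mu_N\) with threshold \(b_{\mu_N}=2\bar h_{\mu_N}/\alpha-1\). This is the only point needing care, and the uniform-admissibility remark already settles it.

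Next I apply \eqref{eq:tax-threshold-stability} pathwise with \(\nu=\mu_N\):
\[
|b_{\mu_N}-b_\mu|\leq\frac{2\operatorname{Lip}(h)}{\alpha}\Wass(\mu_N,\mu).
\]
Taking expectations and using \eqref{eq:expected-W1} gives
\[
\mathbb E|b_{\mu_N}-b_\mu|\leq\frac{2\operatorname{Lip}(h)}{\alpha}\cdot\frac{D}{2\sqrt N}=\frac{\operatorname{Lip}(h)D}{\alpha\sqrt N},
\]
which is \eqref{eq:tax-expected-threshold}.

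For the high-probability bound I intersect with the event of probability at least \(1-\delta\) on which \eqref{eq:high-prob-W1} holds. On that event, the same pathwise inequality multiplies the Dvoretzky--Kiefer--Wolfowitz--Massart radius \(D\sqrt{\log(2/\delta)/(2N)}\) by \(2\operatorname{Lip}(h)/\alpha\). This yields \eqref{eq:tax-high-prob-threshold}.

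I expect no real obstacle. Measurability of \(b_{\mu_N}\) is immediate because it is an affine function of the sample average \(\bar h_{\mu_N}\).
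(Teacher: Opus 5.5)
Your proposal is correct and follows essentially the same route as the paper: the paper's proof is the one-line combination of the threshold bound \eqref{eq:tax-threshold-stability} with the expectation and DKW--Massart bounds of \cref{thm:empirical-rate}. Your constant bookkeeping matches the statement, and your admissibility check for $\mu_N$ makes explicit what the paper leaves to \cref{cor:tax-stability} holding for every law in $\Pp_1(\Theta)$.
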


\begin{proof}
Combine \eqref{eq:tax-threshold-stability} with \cref{thm:empirical-rate}.
\end{proof}

The affine dependence on \(\bar h_\mu\) also permits bounds that use the range of \(h\) directly and avoid passing through the diameter of \(\Theta\).

\begin{theorem}[Direct finite-sample and limit theory for the tax threshold]
\label{thm:tax-sample-mean}
Let \(H=h_+-h_-\), let \(\sigma_h^2=\operatorname{Var}_\mu(h(\theta))\), and let \(\mu_N\) be the empirical law of an i.i.d. sample.
Then
\begin{equation}
\mathbb E|b_{\mu_N}-b_\mu|
\leq\frac{\sigma_h}{\alpha}\frac{2}{\sqrt N}
\leq\frac{H}{\alpha\sqrt N}.
\label{eq:tax-direct-mean}
\end{equation}
For every \(\delta\in(0,1)\), with probability at least \(1-\delta\),
\begin{equation}
|b_{\mu_N}-b_\mu|
\leq\frac{2H}{\alpha}
\sqrt{\frac{\log(2/\delta)}{2N}}.
\label{eq:tax-hoeffding}
\end{equation}
If \(\sigma_h^2>0\), then
\begin{equation}
\sqrt N\,(b_{\mu_N}-b_\mu)
\ \xrightarrow{\ d\ }\ 
\mathcal N\left(0,\frac{4\sigma_h^2}{\alpha^2}\right).
\label{eq:tax-clt}
\end{equation}
\end{theorem}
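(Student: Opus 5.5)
The plan is to reduce everything to a sample mean. By \eqref{eq:tax-threshold}, applied to both \(\mu\) and \(\mu_N\),
\[
b_{\mu_N}-b_\mu=\frac2\alpha\bigl(\bar h_{\mu_N}-\bar h_\mu\bigr),
\qquad
\bar h_{\mu_N}=\frac1N\sum_{i=1}^Nh(\theta_i).
\]
This representation is legitimate for every realization. Under \cref{ass:tax-class}, \(h\) takes values in \([h_-,h_+]\), so \(\bar h_{\mu_N}\in[h_-,h_+]\Subset(\alpha/2,\alpha)\), and \cref{thm:tax-schedule} applies to \(\mu_N\) as well as to \(\mu\). The problem is then about the i.i.d. bounded variables \(Y_i=h(\theta_i)\), which have mean \(\bar h_\mu\), variance \(\sigma_h^2\), and range contained in an interval of length \(H\).

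For the expectation bound \eqref{eq:tax-direct-mean}, I will use Jensen (Cauchy--Schwarz):
\[
\mathbb E|\bar h_{\mu_N}-\bar h_\mu|\leq\sqrt{\operatorname{Var}(\bar h_{\mu_N})}=\sigma_h/\sqrt N .
\]
Multiplying by \(2/\alpha\) gives the first inequality. For the second, I will invoke Popoviciu's inequality, \(\sigma_h^2\leq H^2/4\), which holds for a variable supported in an interval of length \(H\). This gives \(2\sigma_h\leq H\).

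For the high-probability bound \eqref{eq:tax-hoeffding}, I will apply the two-sided Hoeffding inequality to variables with range \(H\):
\[
\mathbb P\bigl(|\bar h_{\mu_N}-\bar h_\mu|>t\bigr)\leq2\exp(-2Nt^2/H^2).
\]
Setting \(t=H\sqrt{\log(2/\delta)/(2N)}\) makes the right side equal to \(\delta\), and multiplying by \(2/\alpha\) yields the stated radius.

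For the limit law \eqref{eq:tax-clt}, the classical Lindeberg--L\'evy central limit theorem gives
\[
\sqrt N(\bar h_{\mu_N}-\bar h_\mu)\to\mathcal N(0,\sigma_h^2),
\]
since \(0<\sigma_h^2<\infty\) by boundedness. Scaling by \(2/\alpha\) gives variance \(4\sigma_h^2/\alpha^2\). As a consistency check, this agrees with \cref{thm:threshold-influence}: there \(\psi_{b}(\theta)=\frac\alpha2(1-b)^2-h(\theta)(1-b)\) and \(q_\mu(b_\mu)=\alpha b_\mu-\bar h_\mu=-\frac\alpha2(1-b_\mu)\), so \(\operatorname{IF}(\theta;\mu)=\frac2\alpha(h(\theta)-\bar h_\mu)\). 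No step is a real obstacle. The only point that needs care is confirming that the empirical law stays in the admissible class, so that the closed-form threshold applies almost surely, and that is handled by the pointwise range condition.
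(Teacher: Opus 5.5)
Your proposal is correct and follows the paper's proof step for step: the exact identity $b_{\mu_N}-b_\mu=\frac2\alpha(\bar h_{\mu_N}-\bar h_\mu)$, Jensen plus Popoviciu, two-sided Hoeffding, then a scalar CLT, with the only difference that you apply Lindeberg--L\'evy directly while the paper gets the limit by treating the identity as a zero-remainder instance of \cref{thm:threshold-influence}. There is one slip in your consistency check, which does not affect the proof: $\psi_b(\theta)=\int_b^1(\alpha s-h(\theta))\dd s=\frac\alpha2(1-b^2)-h(\theta)(1-b)$, not $\frac\alpha2(1-b)^2-h(\theta)(1-b)$, and only the corrected form gives $\psi_{b_\mu}(\theta)=-(1-b_\mu)\bigl(h(\theta)-\bar h_\mu\bigr)$ and hence your stated influence function.
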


\begin{proof}
By \eqref{eq:tax-threshold},
\[
b_{\mu_N}-b_\mu
=\frac2\alpha
\left[\frac1N\sum_{i=1}^Nh(\theta_i)-\mathbb E_\mu h(\theta)\right].
\]
Jensen's inequality bounds the expected absolute sample-mean deviation by \(\sigma_h/\sqrt N\).
Popoviciu's variance inequality gives \(\sigma_h\leq H/2\), proving \eqref{eq:tax-direct-mean}.
Hoeffding's inequality for variables in an interval of length \(H\) gives
\[
\mathbb P\left(
\left|\frac1N\sum_{i=1}^Nh(\theta_i)-\mathbb E h(\theta)\right|>t
\right)
\leq2\exp\left(-\frac{2Nt^2}{H^2}\right).
\]
Set \(t=H\sqrt{\log(2/\delta)/(2N)}\) and multiply by \(2/\alpha\) to obtain \eqref{eq:tax-hoeffding}.

To identify \eqref{eq:tax-clt} as an exact instance of
\cref{thm:threshold-influence}, take
\(\varrho(x,\theta)=\alpha x-h(\theta)\) in
\eqref{eq:type-integral-switching}.
The boundedness of \(h\) verifies the envelope condition, and
\(x\mapsto\varrho(x,\theta)\) is Lipschitz with constant \(\alpha\).
Using \(Q_\mu(b_\mu)=0\) gives
\[
\psi_{b_\mu}(\theta)
=-(1-b_\mu)\bigl(h(\theta)-\bar h_\mu\bigr),
\qquad
q_\mu(b_\mu)=\bar h_\mu-\alpha.
\]
Since \(1-b_\mu=2(\alpha-\bar h_\mu)/\alpha\), the general influence function reduces to
\begin{equation}
\operatorname{IF}_{\mathrm{tax}}(\theta;\mu)
=\frac2\alpha\bigl(h(\theta)-\bar h_\mu\bigr).
\label{eq:tax-influence-function}
\end{equation}
Every empirical mean remains in \([h_-,h_+]\), so the empirical switching functions retain the sign representation and their local roots are precisely \(b_{\mu_N}\).
In fact, the exact identity in the first display of the proof says that the asymptotic linearization \eqref{eq:threshold-asymptotic-linearization} has zero remainder here.
Its influence-function variance is \(4\sigma_h^2/\alpha^2\), and \cref{thm:threshold-influence} proves \eqref{eq:tax-clt}.
\end{proof}

\begin{remark}[Two empirical routes]
\Cref{cor:tax-empirical} is inherited from the general Wasserstein theorem and remains meaningful when the model depends on the full law through many Lipschitz moments.
\Cref{thm:threshold-influence} gives a root-\(N\) limit for a transversal scalar estimating equation even when the ambient type dimension is high, whereas \cref{thm:empirical-W1-dimension} controls arbitrary Lipschitz law dependence and therefore carries the ambient-dimensional rate.
\Cref{thm:tax-sample-mean} is sharper still at finite samples because this affine example depends on \(\mu\) only through the scalar mean \(\bar h_\mu\).
These distinctions prevent an application-specific sufficient statistic from being mistaken for a dimension-free property of general measure-parameterized obstacle problems.
\end{remark}

\begin{proposition}[Attainment of the Lipschitz exponent]
\label{prop:tax-sharpness}
Suppose \(h(\theta)=\theta\) on a compact interval contained in \((\alpha/2,\alpha)\), and let \(\mu=\delta_\theta\), \(\nu=\delta_\eta\).
Then
\begin{equation}
|b_\mu-b_\nu|
=\frac2\alpha\Wass(\mu,\nu).
\label{eq:tax-sharp}
\end{equation}
Consequently the exponent one in \eqref{eq:tax-threshold-stability} cannot be improved to any exponent greater than one with a uniform constant near a fixed point mass.
\end{proposition}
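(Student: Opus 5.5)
The plan is to compute both sides of \eqref{eq:tax-sharp} explicitly and then read off the sharpness claim. First, I will check that point masses belong to the admissible class. The assumption is that \(h(\theta)=\theta\) on a compact interval \([h_-,h_+]\Subset(\alpha/2,\alpha)\), which we take as \(\Theta\) with the Euclidean metric. Then \(h\) is Lipschitz with \(\operatorname{Lip}(h)=1\), and its range lies in \([h_-,h_+]\). Hence \cref{ass:tax-class} holds, after choosing \(\bar\tau\) as in \eqref{eq:upper-obstacle-inactive}, and \cref{thm:tax-schedule} applies to every law on \(\Theta\), in particular to \(\delta_\theta\) and \(\delta_\eta\).

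Second, I will evaluate the thresholds. For \(\mu=\delta_\theta\) one has \(\bar h_\mu=h(\theta)=\theta\), so \eqref{eq:tax-threshold} gives \(b_\mu=2\theta/\alpha-1\). Likewise \(b_\nu=2\eta/\alpha-1\), and therefore \(|b_\mu-b_\nu|=\frac2\alpha|\theta-\eta|\).

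Third, I will compute \(\Wass(\delta_\theta,\delta_\eta)\). The only coupling of two Dirac masses is \(\delta_{(\theta,\eta)}\), so \(\Wass(\delta_\theta,\delta_\eta)=d_\Theta(\theta,\eta)=|\theta-\eta|\). Combining this with the previous step proves \eqref{eq:tax-sharp}. It also shows that the constant \(2\operatorname{Lip}(h)/\alpha\) in \eqref{eq:tax-threshold-stability} is attained.

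Finally, for the exponent claim I will argue by contradiction. Suppose \(|b_\mu-b_\nu|\le C\,\Wass(\mu,\nu)^\beta\) held for some \(\beta>1\) and all \(\nu\) near a fixed \(\mu=\delta_\theta\). Taking \(\nu=\delta_\eta\) with \(\eta\to\theta\), so that \(W:=|\theta-\eta|\to0\), would give \(\frac2\alpha W\le CW^\beta\), that is, \(\frac2\alpha\le CW^{\beta-1}\to0\), which is impossible. The only step needing care is the first, namely admissibility of the point masses, including taking \(\Theta\) equal to the compact interval so that every law on it is admissible; everything else is a direct computation.
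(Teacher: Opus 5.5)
Your proposal is correct and follows the paper's proof essentially line by line: compute $\Wass(\delta_\theta,\delta_\eta)=|\theta-\eta|$, substitute $\bar h_{\delta_\theta}=\theta$ into \eqref{eq:tax-threshold}, and rule out any exponent $\beta>1$ by dividing by $|\theta-\eta|$ and letting $\eta\to\theta$. Your explicit check that the point masses lie in the admissible class, with $\Theta$ the compact interval and $\operatorname{Lip}(h)=1$, is a small addition that the paper leaves implicit.
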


\begin{proof}
For point masses, \(\Wass(\delta_\theta,\delta_\eta)=|\theta-\eta|\).
Substitution into \eqref{eq:tax-threshold} proves \eqref{eq:tax-sharp}.
If an estimate with exponent \(\beta>1\) held uniformly, division by \(|\theta-\eta|\) and passage to \(\eta\to\theta\) would give \(2/\alpha\leq0\), a contradiction.
\end{proof}

\subsection{Economic interpretation and boundaries}

An increase in \(\bar h_\mu\) shifts \(b_\mu\) to the right.
In the model, this composite term rises when the expected implementation cost is higher or when the activity benefit lost through taxation is larger.
The government then leaves a wider range of low-index firms at the lower obstacle.
Above the threshold, the schedule rises smoothly rather than jumping, because the gradient penalty assigns a cost to abrupt differentiation.

These conclusions are comparative statics of the stated reduced objective.
They are not empirical estimates and do not imply that an observed statutory schedule should have this shape.
The objective omits general-equilibrium incidence, avoidance across jurisdictions, and incentive-compatibility constraints associated with private information.
Those features can change the reduced integrand and may introduce multiple contact regions.
The mathematical value of the example is that every primitive, response estimate, and free-boundary constant is visible.

\subsection{Analytic illustration}
\label{sec:numerical}

The formulas of \cref{thm:tax-schedule} can be plotted without solving a numerical optimization problem.
The calculation illustrates the closed-form comparative statics and does not enter any statement or proof.

Take \(\alpha=2\), \(\gamma=1/4\), and compare two laws whose composite means are \(\bar h_\mu=1.35\) and \(\bar h_\nu=1.50\).
Equation \eqref{eq:tax-threshold} gives \(b_\mu=0.35\) and \(b_\nu=0.50\).
Both schedules are then evaluated from \eqref{eq:explicit-tax-schedule}.
The larger composite cost moves the zero-tax region to the right, weakly lowers the schedule everywhere, and lowers it strictly for \(x>0.35\); see \cref{fig:tax-schedules}.

\begin{figure}[H]
\centering
\IfFileExists{figures/tax_schedules.pdf}{%
\includegraphics[width=0.78\textwidth]{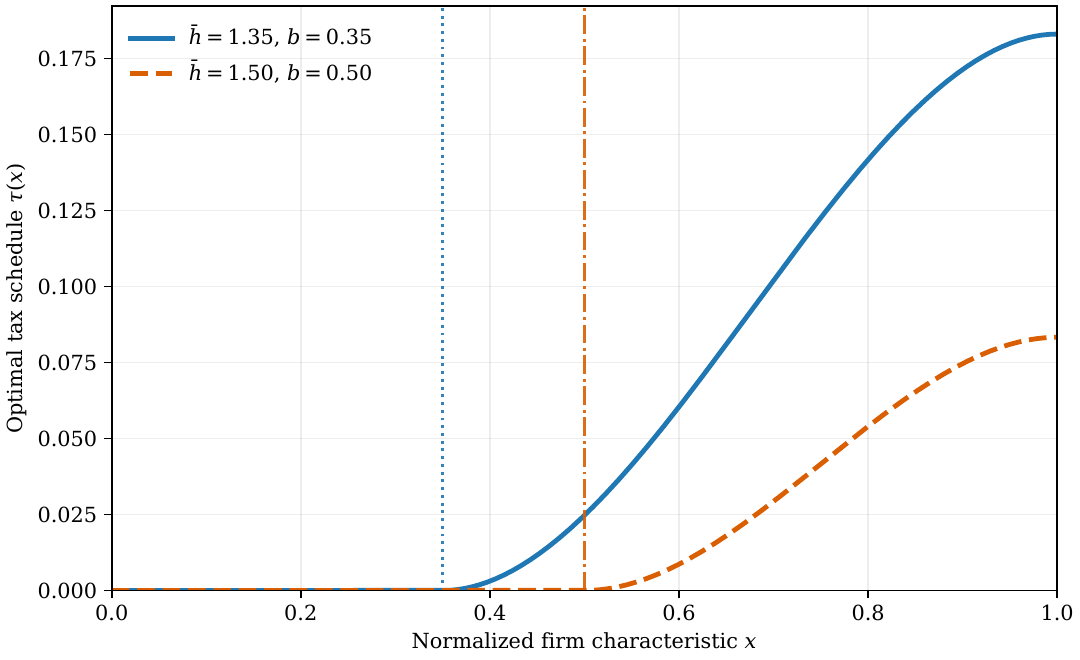}%
}{%
\fbox{\parbox[c][45mm][c]{0.78\textwidth}{\centering Run \texttt{python anc/numerical\_example.py} to generate the analytic illustration.}}%
}
\caption{Closed-form optimal tax schedules for \(\alpha=2\) and \(\gamma=1/4\). The solid blue curve has \(\bar h=1.35\) and threshold \(b=0.35\); the dashed orange curve has \(\bar h=1.50\) and threshold \(b=0.50\). Matching dotted and dash-dotted vertical lines mark the thresholds. Each schedule is zero to the left of its threshold and positive to the right. The parameter values are constructed, and no empirical data are used.}
\label{fig:tax-schedules}
\end{figure}
\FloatBarrier

The script \texttt{anc/numerical\_example.py} generates both PDF and PNG versions of the figure.
It also evaluates the identity
\[
|b_\mu-b_\nu|=\frac2\alpha|\bar h_\mu-\bar h_\nu|,
\]
which here gives \(0.15\) on both sides.
This check verifies an algebraic identity; it is not a simulation-based validation of the theory.

\section{Discussion and outlook}
\label{sec:discussion}

The principal mathematical message is that stability of an optimizer and stability of its active-set boundary are different assertions.
Strong monotonicity controls the optimizer directly and, through a type-Lipschitz reduced gradient, gives a Wasserstein-Lipschitz policy map.
The location of the free boundary additionally requires a quantity that changes sign at a controlled rate.
In the one-dimensional problem, integration of the Euler equation identifies that quantity as the cumulative forcing \(Q_\mu\).
This removes the ambiguity that would arise from trying to use the obstacle solution itself, which has quadratic rather than transversal contact.

The cumulative representation also explains the interval structure.
It is not imposed as an assumption on the contact set.
It follows from the single crossing of \(q_\mu\), the negative total forcing, and the natural Neumann condition at the right endpoint.
The balance equation \(\int_{b_\mu}^1q_\mu=0\) has an intuitive interpretation: the positivity region starts where the total marginal gain remaining to the right is exactly zero.
This identity is more informative than a generic continuous-dependence estimate and is the reason the threshold can move linearly even though the solution grows quadratically away from the obstacle.

The algebraic crossing theorem makes the loss of boundary regularity quantitative.
An order-\(m\) lower bound for the switching function yields an exponent \(1/m\), and the cubic family attains the resulting \(1/3\) modulus uniformly.
The policies in that family remain uniformly close in \(C^1\) while the free boundary moves on the larger cube-root scale.
Thus no refinement of the energy estimate alone can restore a Lipschitz boundary rate.
One must either assume transversality, identify a different nondegenerate switching variable, or accept a weaker modulus tied to the order of contact.

The empirical results deliberately separate two statistical routes.
On a compact interval, the distribution-function formula for \(W_1\) gives a transparent distribution-free \(N^{-1/2}\) bound without density assumptions.
On compact subsets of \(\mathbb R^k\), multiscale transport estimates give ambient-dimension-dependent bounds, which transfer directly to policies and algebraically degenerate thresholds.
By contrast, a transversal threshold is the zero of one smooth scalar estimating equation; its asymptotic linearization therefore yields a root-\(N\) central limit theorem even when the generic \(W_1\) rate is dimension-limited.
There is no contradiction: the first route controls the response to arbitrary law perturbations, whereas the second exploits a local finite-dimensional sufficient direction.

The higher-dimensional statement has a similarly explicit boundary.
It proves the final level-set perturbation step under a stable defining function, but does not derive that function from an arbitrary obstacle problem.
At singular points a single smooth defining function may not exist; even at regular points, its quantitative dependence on the data can require substantial analysis.
Existing obstacle stability theory addresses deeper versions of those questions under its own hypotheses.
The conditional theorem here is useful when a model supplies a switching field directly or when classical regularity and a hodograph construction have already done the difficult geometric work.

These comparisons delimit the contribution.
The paper does not improve the generalized differentiability theory of the full obstacle solution map, the fine structure of singular free boundaries, or the operation complexity of bilevel algorithms.
It provides an end-to-end distributional stability mechanism, with explicit constants, sharp algebraic crossing exponents, and local threshold inference, for a measure-parameterized class of bilevel obstacle problems.

The corporate-tax model shows that the abstract hypotheses are not empty, but it should be interpreted with restraint.
The schedule is optimal only for the stated reduced loss.
The affine local fiscal term and quadratic investment technology were selected to keep the behavioral response and the free boundary analytically visible.
A model with endogenous prices, profit shifting, multiple jurisdictions, private information, or a binding statutory ceiling would alter the forcing and could create additional regimes.
In particular, a binding upper obstacle would produce a second active set and require a two-threshold analysis.

Several extensions appear mathematically plausible.
A semilinear one-dimensional reduced energy would replace the explicit cumulative formula by a nonlinear shooting map; monotonicity and an implicit-function estimate may still yield threshold stability.
A vector-valued policy would require a different notion of active set because the positive cone has corners.
Matching lower bounds for the generic policy and boundary sample complexity would distinguish ambient dimension from intrinsic dimension or finite support.
Finally, models in which the follower response is set-valued would require a measurable selection or an optimistic/pessimistic bilevel convention and could destroy differentiability of the reduced integrand.

These are genuine changes of mathematical regime.
The present paper establishes the stated results under explicit, verifiable assumptions.

\subsection{Conclusion}
\label{sec:conclusion}

We developed a measure-parameterized bilevel obstacle problem from primitive follower optimization through the motion of its active-set boundary.
Uniform strong convexity gives a stable response map; convexity of the reduced integrand gives a unique upper-level policy; and a type-Lipschitz marginal term converts Wasserstein perturbations into energy-space perturbations.
In one dimension, a cumulative-forcing representation yields an interval contact set and an exact scalar equation for the free boundary.
An algebraic crossing of order \(m\) gives a \(W_1^{1/m}\) modulus, with a sharp cubic example; transversality recovers Lipschitz stability.
The deterministic estimates pass to empirical type laws with dimension-dependent rates, while a transversal empirical boundary additionally admits an explicit influence function and a root-\(N\) central limit theorem.

The corporate-tax specialization produces a smooth nonlinear schedule with an endogenous zero-tax region and closed-form Wasserstein comparative statics.
Its role is to verify the mathematical architecture in a transparent economic model, not to make an empirical policy recommendation.
The results therefore identify both a reusable mechanism and its boundary: probability-law stability of a free boundary is available when the law enters through Lipschitz expectations and the active-set interface admits a nondegenerate switching description.

\subsection*{Data and code availability}

This study uses no external data.
The analytic figure and algebraic identity check can be reproduced with the Python script in \texttt{anc/numerical\_example.py}.

\appendix
\section{Auxiliary functional-analytic details}
\label{app:functional}

This appendix records details used implicitly in the main variational arguments.

\begin{lemma}[Closedness of the obstacle cone]
\label{lem:closed-cone}
The set \(\K=\{v\in V:v\geq0\text{ a.e.}\}\) is strongly and weakly closed in \(V\).
\end{lemma}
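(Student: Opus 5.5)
Convexity of \(\K\) is immediate, since a convex combination of nonnegative functions is nonnegative and \(V\) is a linear space. The plan is therefore to prove strong closedness and then deduce weak closedness from Mazur's lemma, or equivalently from the fact that a closed convex subset of a Hilbert space is weakly closed.

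For strong closedness, let \((v_n)\subset\K\) with \(v_n\to v\) in \(V\) for the norm \(\|\cdot\|_V=\|\nabla\cdot\|_{L^2}\).

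1. By the Poincar\'e inequality \eqref{eq:poincare}, \(\|v_n-v\|_{L^2}\leq C_P\|v_n-v\|_V\to0\), so \(v_n\to v\) in \(L^2(\Omega)\).
2. Pass to a subsequence converging almost everywhere.
3. Each \(v_n\geq0\) a.e., and a countable union of null sets is null, so the pointwise limit satisfies \(v\geq0\) a.e.
4. Since \(v\in V\) already (\(V\) is a closed subspace, being the kernel of the continuous trace restricted to \(\Gamma_D\)), we conclude \(v\in\K\).

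A subsequence-free alternative is available. The negative part satisfies \(\|v^-\|_{L^2}=\|v^--v_n^-\|_{L^2}\leq\|v-v_n\|_{L^2}\to0\), because \(r\mapsto r^-\) is 1-Lipschitz and \(v_n^-=0\). This gives \(v^-=0\) directly.

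For weak closedness, suppose \(v_n\rightharpoonup v\) in \(V\). By Mazur's lemma, some sequence of convex combinations of the \(v_n\) converges strongly to \(v\). Those combinations lie in \(\K\) by convexity, and strong closedness then gives \(v\in\K\).

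One could instead argue directly: by Rellich compactness (valid on bounded Lipschitz \(\Omega\)), \(v_n\to v\) strongly in \(L^2\) along a subsequence, and the a.e. argument above applies. This is the form actually used in the proof of \cref{thm:wellposed}.

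No step is a genuine obstacle. The only point needing care is that the sign condition is an a.e. statement, so convergence must be upgraded to a.e. convergence (via a subsequence) or handled through the Lipschitz negative-part map. A second point is that \(V\) itself must be closed, which follows from continuity of the trace operator \(H^1(\Omega)\to L^2(\partial\Omega)\).
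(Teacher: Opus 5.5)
Your proposal is correct and matches the paper's proof. Strong convergence in \(V\) gives \(L^2\) convergence, an a.e.\ convergent subsequence passes nonnegativity to the limit, and weak closedness follows because \(\K\) is a strongly closed convex subset of a Hilbert space; your Mazur argument is exactly that fact, and the negative-part and Rellich variants are valid but not needed.
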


\begin{proof}
If \(v_n\to v\) in \(V\), then \(v_n\to v\) in \(L^2(\Omega)\).
A subsequence converges almost everywhere, so nonnegativity passes to the limit.
Thus \(\K\) is strongly closed.
Every strongly closed convex subset of a Hilbert space is weakly closed.
\end{proof}

\begin{lemma}[Coercivity with linear lower growth]
\label{lem:coercivity}
Suppose \eqref{eq:poincare} and \eqref{eq:F-lower} hold.
Then \(\J_\mu(u)\to+\infty\) whenever \(u\in\K\) and \(\|u\|_V\to\infty\), uniformly in \(\mu\in\mathfrak M\).
\end{lemma}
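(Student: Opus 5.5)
The plan is to derive a lower bound for \(\J_\mu\) that depends only on \(\|u\|_V\) and on data that are uniform over \(\mathfrak M\). Fix \(u\in\K\) and \(\mu\in\mathfrak M\). For the Dirichlet part we use \(\|u\|_V=\|\nabla u\|_{L^2}\), so it equals \(\frac\gamma2\|u\|_V^2\) exactly. For the integral part we apply the pointwise lower bound \eqref{eq:F-lower}:
\[
\int_\Omega F_\mu(x,u(x))\dd x\ \geq\ -\|b\|_{L^1(\Omega)}-c_0\int_\Omega|u|\dd x .
\]
Here \(b\) and \(c_0\) do not depend on \(\mu\), which is the source of the uniformity.

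Next we handle the linear term. Cauchy--Schwarz on the bounded set \(\Omega\), followed by the Poincar\'e inequality \eqref{eq:poincare}, gives
\[
\int_\Omega|u|\dd x\ \leq\ |\Omega|^{1/2}\|u\|_{L^2}\ \leq\ |\Omega|^{1/2}C_P\|u\|_V .
\]
Combining the two steps yields, for every \(\mu\in\mathfrak M\) and \(u\in\K\),
\[
\J_\mu(u)\ \geq\ \phi(\|u\|_V),
\qquad
\phi(s):=\frac\gamma2 s^2-c_0C_P|\Omega|^{1/2}s-\|b\|_{L^1}.
\]
This is the same estimate used at the start of the proof of \cref{thm:wellposed}.

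Finally, \(\phi\) is a quadratic with positive leading coefficient \(\gamma/2\), so \(\phi(s)\to+\infty\) as \(s\to\infty\). Since \(\phi\) does not depend on \(\mu\), we get \(\inf_{\mu\in\mathfrak M}\J_\mu(u)\geq\phi(\|u\|_V)\to\infty\), which is the claimed uniform coercivity.

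No step is a real obstacle. The only points needing care are that \(F_\mu(\cdot,u(\cdot))\) is measurable, which holds because \(F_\mu\) is Carath\'eodory, and that its negative part is integrable, which the same bound \eqref{eq:F-lower} provides. Hence \(\J_\mu(u)\in(-\infty,+\infty]\) is well defined and the inequality is meaningful.
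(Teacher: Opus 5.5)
Your proposal is correct and follows the paper's proof: the paper likewise combines \eqref{eq:F-lower}, H\"older's inequality and \eqref{eq:poincare} to obtain \(\J_\mu(u)\geq\frac\gamma2\|u\|_V^2-A\|u\|_V-B\). There \(A=c_0|\Omega|^{1/2}C_P\) and \(B=\|b\|_{L^1}\) are independent of \(\mu\), and the paper concludes because the quadratic term dominates; your extra remark that the integral is well defined in \((-\infty,+\infty]\) is a small additional check.
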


\begin{proof}
The estimate in the proof of \cref{thm:wellposed} reads
\[
\J_\mu(u)
\geq
\frac\gamma2\|u\|_V^2
-A\|u\|_V-B,
\]
where \(A=c_0|\Omega|^{1/2}C_P\) and \(B=\|b\|_{L^1}\) are independent of \(\mu\).
The quadratic term dominates as \(\|u\|_V\to\infty\).
\end{proof}

\begin{lemma}[Differentiation of the integral functional]
\label{lem:Gateaux}
Under \eqref{eq:G-growth}, the map
\[
I_\mu(u):=\int_\Omega F_\mu(x,u(x))\dd x
\]
is G\^ateaux differentiable on \(V\), and
\[
I_\mu'(u)v=\int_\Omega G_\mu(x,u(x))v(x)\dd x.
\]
\end{lemma}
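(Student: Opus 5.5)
The plan is to differentiate the difference quotient directly. Fix \(u,v\in V\) and \(t\in(0,1]\), and write
\[
\frac{I_\mu(u+tv)-I_\mu(u)}{t}
=\int_\Omega\frac{F_\mu(x,u+tv)-F_\mu(x,u)}{t}\dd x .
\]
The steps are as follows.

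First, I must check that \(I_\mu\) is finite on \(V\). Since \(F_\mu\) is \(C^1\) in \(z\), the fundamental theorem of calculus gives, for almost every \(x\),
\[
F_\mu(x,z)=F_\mu(x,0)+\int_0^1 G_\mu(x,sz)\,z\dd s .
\]
Combining \eqref{eq:G-growth} with \eqref{eq:F-zero} then yields \(|F_\mu(x,z)|\le b_0(x)+a_0(x)|z|+a_1|z|^2\). For \(u\in V\subset L^2(\Omega)\) the right side is integrable, because \(b_0\in L^1\) and \(a_0\in L^2\) together with Cauchy--Schwarz control the middle term. Measurability of \(x\mapsto F_\mu(x,u(x))\) follows from the Carath\'eodory property.

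Second, I apply the mean-value theorem pointwise. For almost every \(x\) there is \(s=s(x,t)\in(0,1)\) with
\[
\frac{F_\mu(x,u+tv)-F_\mu(x,u)}{t}=G_\mu(x,u+stv)\,v .
\]
To avoid a measurable-selection issue for \(s\), I will instead use the integral form \(\int_0^1G_\mu(x,u+stv)v\dd s\). As \(t\downarrow0\), continuity of \(G_\mu(x,\cdot)\) makes the integrand converge to \(G_\mu(x,u)v\) for almost every \(x\).

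Third, I dominate the quotients. By \eqref{eq:G-growth}, for \(|t|\le1\),
\[
|G_\mu(x,u+stv)v|\le \bigl(a_0+a_1|u|+a_1|v|\bigr)|v| .
\]
This bound is in \(L^1(\Omega)\) because \(a_0,u,v\in L^2\). Dominated convergence then gives the limit \(\int_\Omega G_\mu(x,u)v\dd x\), and the same argument works for negative \(t\).

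Fourth, the map \(v\mapsto\int G_\mu(x,u)v\) is linear. It is bounded on \(V\) since \(G_\mu(\cdot,u)\in L^2\) by \eqref{eq:G-growth}, and \(\|v\|_{L^2}\le C_P\|v\|_V\) by \eqref{eq:poincare}. This establishes G\^ateaux differentiability with the stated derivative.

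The main obstacle is the domination step, specifically verifying that the growth bound gives an \(L^1\) majorant uniform in \(t\). The estimate in the third step handles this directly, so no real difficulty is expected beyond the joint measurability of \(x\mapsto G_\mu(x,u(x)+stv(x))\), which also follows from the Carath\'eodory structure.
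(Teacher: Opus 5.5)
Your proof is correct and follows the paper's own argument: a pointwise mean-value bound on the difference quotient by \([a_0+a_1(|u|+|v|)]|v|\in L^1(\Omega)\) for \(|t|\leq1\), followed by dominated convergence. Your extra checks fill in details the paper leaves implicit, namely finiteness of \(I_\mu\) on \(V\) via \eqref{eq:F-zero} and boundedness of the limiting linear functional via Poincar\'e; the measurable-selection concern is harmless, since the domination step needs only the pointwise bound and not measurability of \(s\).
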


\begin{proof}
For \(|t|\leq1\), the mean-value theorem gives
\[
\left|
\frac{F_\mu(x,u+tv)-F_\mu(x,u)}t
\right|
\leq
[a_0(x)+a_1(|u(x)|+|v(x)|)]|v(x)|.
\]
The right side is integrable by H\"older's inequality because \(a_0,u,v\in L^2(\Omega)\).
Pointwise convergence and dominated convergence prove the formula.
\end{proof}

\begin{proposition}[A strongly monotone operator formulation]
\label{prop:operator}
Define \(A_\mu:V\to V^*\) by
\[
\langle A_\mu(u),v\rangle
=\int_\Omega\gamma\nabla u\cdot\nabla v+G_\mu(x,u)v\dd x.
\]
Then
\begin{equation}
\langle A_\mu(u)-A_\mu(v),u-v\rangle
\geq\gamma\|u-v\|_V^2.
\label{eq:operator-strong}
\end{equation}
The upper problem is equivalent to finding \(u_\mu\in\K\) such that
\[
\langle A_\mu(u_\mu),v-u_\mu\rangle\geq0
\quad(v\in\K).
\]
\end{proposition}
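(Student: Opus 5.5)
The proof splits into two parts: the strong monotonicity estimate \eqref{eq:operator-strong}, and the equivalence between the upper problem and the operator variational inequality. For the first part, I will expand the pairing term by term. For \(u,v\in V\),
\[
\langle A_\mu(u)-A_\mu(v),u-v\rangle
=\gamma\int_\Omega|\nabla(u-v)|^2\dd x
+\int_\Omega[G_\mu(x,u)-G_\mu(x,v)](u-v)\dd x .
\]
The first term equals \(\gamma\|u-v\|_V^2\) by the choice of norm on \(V\). The second integrand is pointwise nonnegative by \eqref{eq:G-monotone}, so dropping it gives \eqref{eq:operator-strong}.

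Before this, I will check that \(A_\mu\) is well defined as a map into \(V^*\). The gradient part is bounded by \(\gamma\|u\|_V\|v\|_V\). For the zero-order part, the growth bound \eqref{eq:G-growth} gives \(G_\mu(\cdot,u)\in L^2(\Omega)\) with
\[
\|G_\mu(\cdot,u)\|_{L^2}\leq\|a_0\|_{L^2}+a_1\|u\|_{L^2}.
\]
Combining this with H\"older's inequality and the Poincar\'e inequality \eqref{eq:poincare} bounds the zero-order term by a constant times \(\|v\|_V\). Both parts are linear in \(v\), so \(A_\mu(u)\in V^*\), and the pairing above makes sense.

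For the equivalence, I will observe that \(\langle A_\mu(u),v-u\rangle\) is exactly the left side of \eqref{eq:VI}. By \cref{lem:Gateaux}, it is also the G\^ateaux derivative of \(\J_\mu\) at \(u\) in the direction \(v-u\). \Cref{thm:wellposed} already shows that the minimizer \(u_\mu\) satisfies \eqref{eq:VI} and that any solution of \eqref{eq:VI} minimizes \(\J_\mu\) over \(\K\). The equivalence is therefore a restatement of that theorem.

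To make the converse direction self-contained, I can also use \eqref{eq:operator-strong} directly to show the operator inequality has at most one solution. Suppose \(u_1,u_2\in\K\) both solve it. Test the inequality for \(u_1\) with \(v=u_2\) and the inequality for \(u_2\) with \(v=u_1\), then add them. This yields \(\langle A_\mu(u_1)-A_\mu(u_2),u_1-u_2\rangle\leq0\), so \(\gamma\|u_1-u_2\|_V^2\leq0\) and \(u_1=u_2\). Since the minimizer is one solution, it is the only one.

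I expect no real obstacle in either part. The only point needing care is the well-definedness of \(A_\mu\) into \(V^*\), and that follows from \eqref{eq:G-growth} together with the Poincar\'e inequality as described above.
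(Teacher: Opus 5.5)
Your proposal is correct and follows the paper's argument: the gradient term gives exactly \(\gamma\|u-v\|_V^2\), the zero-order term is nonnegative by \eqref{eq:G-monotone}, and the equivalence is \eqref{eq:VI} from \cref{thm:wellposed} rewritten in operator notation. Your additional checks are sound supplements that the paper leaves implicit: that \(A_\mu\) maps into \(V^*\) by \eqref{eq:G-growth} and Poincar\'e, and that strong monotonicity forces uniqueness of solutions of the operator inequality.
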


\begin{proof}
The gradient contribution in the left side of \eqref{eq:operator-strong} is \(\gamma\|u-v\|_V^2\).
The remaining integral is nonnegative by \eqref{eq:G-monotone}.
The variational inequality is \eqref{eq:VI} in operator notation.
\end{proof}

The estimate \eqref{eq:operator-strong} explains why a positive quadratic term \(\lambda\int u^2\) was unnecessary.
Adding one would strengthen coercivity but would also obscure the elementary cumulative representation used for the free-boundary theorem.

\section{Differentiation through the response map}
\label{app:response-checks}

The envelope hypothesis \eqref{eq:g-envelope} follows from the stronger componentwise condition that, for every bounded \(z\)-interval, there is an integrable function \(M(x,\theta)\) such that
\[
|\partial_zL(x,\theta,z,R)|
+|D_yL(x,\theta,z,R)||D_zR(x,\theta,z)|
\leq M(x,\theta).
\]
Under the differentiability assumptions of \cref{prop:primitive-to-reduced}, the chain rule gives \eqref{eq:primitive-g}, and dominated convergence permits
\[
\partial_zF_\mu(x,z)
=\int_\Theta\partial_zf(x,\theta,z)\,\mu(\dd\theta).
\]
In the corporate-tax model the calculation is even simpler: \(f\) is affine in \(z\), all functions are bounded on the compact type space, and the derivative follows by direct algebra.

\begin{remark}[Alternative boundary conditions]
The anchoring condition at \(x=0\) serves two purposes: it supplies Poincar\'e coercivity and fixes the additive constant in the one-dimensional problem.
With pure Neumann conditions, the linear functional must satisfy an additional compatibility or coercivity condition in the constant direction.
With Dirichlet conditions at both endpoints, the switching formula changes because both endpoint slopes are constrained.
The one-sided Dirichlet and one-sided natural boundary conditions are therefore part of the model, not a disposable notation choice.
\end{remark}

\section{Explicit calculations for affine forcing}
\label{app:affine}

Let \(q_h(x)=\alpha x-h\) with \(h\in(\alpha/2,\alpha)\).
Then
\[
Q_h(x)
=\int_x^1(\alpha s-h)\dd s
=(1-x)\left(\frac\alpha2(1+x)-h\right).
\]
The nontrivial root satisfies
\[
\frac\alpha2(1+b_h)-h=0,
\qquad
b_h=\frac{2h}{\alpha}-1.
\]
Since \(h<\alpha\),
\[
b_h<\frac h\alpha,
\]
so the free boundary lies strictly to the left of the point where the local forcing changes sign.
This gap reflects aggregation of the positive forcing remaining to the right.

Using \(h=\alpha(1+b_h)/2\) gives
\[
Q_h(x)=\frac\alpha2(1-x)(x-b_h).
\]
For \(s=x-b_h\) and \(L=1-b_h\),
\begin{align*}
u_h(x)
&=\frac\alpha{2\gamma}\int_{b_h}^x(1-t)(t-b_h)\dd t\\
&=\frac\alpha{2\gamma}\int_0^s(L-r)r\dd r\\
&=\frac\alpha{2\gamma}\left(\frac{Ls^2}{2}-\frac{s^3}{3}\right)\\
&=\frac\alpha{12\gamma}s^2(3L-2s).
\end{align*}
At \(x=1\), \(s=L\), so \(u_h(1)=\alpha L^3/(12\gamma)\).

The multiplier on the contact interval is
\[
\xi_h(x)=-q_h(x)=h-\alpha x>0
\quad(0<x<b_h),
\]
and vanishes from the complementarity equation on \((b_h,1)\).
At the free boundary,
\[
-q_h(b_h)
=h-\alpha\left(\frac{2h}{\alpha}-1\right)
=\alpha-h>0.
\]
Consequently
\[
u_h(b_h+s)
=\frac{\alpha-h}{2\gamma}s^2+O(s^3),
\]
which agrees with \cref{cor:quadratic-growth}.

\section{The degenerate family in complementarity form}
\label{app:degenerate}

For the family in \cref{ex:degenerate}, define
\[
u_a(x)=\frac1\gamma\int_0^x[Q_a(t)]_+\dd t.
\]
Because \(Q_a<0\) before \(b_a\) and \(Q_a>0\) after it,
\[
u_a=0\text{ on }[0,b_a],
\qquad
u_a'=\frac{Q_a}{\gamma}>0\text{ on }(b_a,1).
\]
On the positivity interval, \(-\gamma u_a''-q_a=0\).
On the contact interval, \(u_a''=0\) and
\[
-q_a=Q_a'\geq0.
\]
Thus the family is not merely an abstract collection of level sets; every member is the unique solution of a legitimate obstacle variational inequality.
Furthermore,
\[
Q_a-Q_0=-a(1-x),
\qquad
q_a-q_0=-a.
\]
The positive-part contraction yields
\[
\|u_a'-u_0'\|_\infty\leq\frac a\gamma,
\qquad
\|u_a-u_0\|_\infty\leq\frac a\gamma,
\]
while the boundary displacement is \(a^{1/3}\).
This direct comparison separates analytic stability of the state from geometric stability of the active set.

\bibliographystyle{plainnat}
\bibliography{references}

\end{document}